\pgfplotsset{compat=1.15}
\date{\today}
\newtheorem{theorem}{Theorem}[section]
\newtheorem{corollary}[theorem]{Corollary}
\newtheorem{conjecture}[theorem]{Conjecture}
\newtheorem{definition}[theorem]{Definition}
\newtheorem{example}[theorem]{Example}
\newtheorem{lemma}[theorem]{Lemma}
\newtheorem{proposition}[theorem]{Proposition}
\newtheorem{remark}[theorem]{Remark}
\begin{document}
\title[The unitary dual of $U(n,2)$]{The unitary dual of $U(n,2)$}
\author{Kayue Daniel Wong} \author{Hongfeng Zhang}

\address{School of Science and Engineering, The Chinese University of Hong Kong, Shenzhen, Guangdong 518172, P. R. China }
\email{kayue.wong@gmail.com}

\address{Department of Mathematics, the University of Hong Kong, P. R. China }
\email{zhanghongf@pku.edu.cn}

\begin{abstract}
In this paper, we give a full classification of the unitary dual of $G = U(n,2)$ for $n \geq 3$. As a consequence, we determine which of these representations are weakly fair $A_{\mathfrak{q}}(\lambda)$-modules or special unipotent representations, resulting in a verification of a conjecture of Trapa and Vogan for $G$.
\end{abstract}

\maketitle

\section{Introduction}\label{sec:intro}
A main unsolved problem in the representation theory of real reductive Lie groups $G$ is the classification of all irreducible, unitarizable $(\mathfrak{g},K)$-modules, i.e. the unitary dual $\widehat{G}$.
With the help of the \texttt{atlas} software \cite{AvLTV20}, one can check the unitarizability of any specified irreducible module. Although one cannot obtain $\widehat{G}$ directly from the software, it provides a powerful tool in verifying some conjectures, as well as giving some new insights on the structure of $\widehat{G}$ (see \cite{W22}, \cite{WZ23} for instance).

\medskip
In this manuscript, we give a full classification of the unitary dual for $G = U(n,2)$ for $n > 2$ with real infinitesimal characters (the general case can be obtained from the real infinitesimal case by real parabolic induction). Before doing so, we give a short review of some work related to its classification problem -- firstly, Knapp and Speh has a full classification of $\widehat{U(2,2)}$ in \cite{KS82}. As for $n > 2$, a lot of effort was made in the study of its unitary dual by using the notion of \emph{basic cases} \cite{KS83}. Some more results were given by Baldoni-Silva and Knapp (e.g. \cite{BK87}, \cite{BK89a}, \cite{BK89b}) for reductive groups of real rank two in general. 

\medskip
There are two main objectives for the manuscript. 
Firstly, we aim to incorporate some discoveries from the aforementioned works into the language of \emph{combinatorial $\theta$-stable datum} introduced in \cite{W22}. This framework provides a parametrization of all irreducible $(\mathfrak{g},K)$-modules that is useful for solving problems related to the unitary dual of $G$. Special emphasis is put on the notion of \emph{fundamental modules} (Section \ref{sec-fund}), which are conjectured to be the `core' of the unitary dual of $U(p,q)$ and more generally all real reductive groups. In particular, we will give evidences on the importance of fundamental modules in the case of $U(n,2)$ (see Remark \ref{rmk-un1} and Remark \ref{rnk-un2} below).

\smallskip
Secondly, our approach enables us to answer some questions closely related to the unitary dual problem (and beyond). To start with, a large part of the unitary dual is given by cohomologically inducing a unitary representation within some specified \emph{`ranges'} depending on its infinitesimal character \cite{KV95}. If the inducing module is a unitary character, such modules are called {\it $A_{\mathfrak{q}}(\lambda)$-modules}, and they play a vital role in the study of Arthur's parametrization of automorphic representations (cf. \cite{VZ84}, \cite{AJ87}). A fundamental problem in representation theory is to determine when the $A_{\mathfrak{q}}(\lambda)$-modules in the \emph{weakly fair range} is nonzero. In this manuscript, we will determine which representations in $\widehat{U(n,2)}$ are weakly fair $A_{\mathfrak{q}}(\lambda)$-modules, and specify which $\mathfrak{q}$ and $\lambda$ one may take in all such cases. As applications, we verify a conjecture of Trapa and Vogan for $U(n,2)$ (Theorem \ref{thm-trapa}) that all representations in $\widehat{U(n,2)}$ with integral infinitesimal characters are weakly fair $A_{\mathfrak{q}}(\lambda)$-modules. On the other hand, we will see in Section \ref{subsec-ii} that one class of unitary representations not covered in the basic cases in \cite{KS83} is a fair range $A_{\mathfrak{q}}(\lambda)$-module with $\mathfrak{l}_0 = \mathfrak{u}(n,1) + \mathfrak{u}(0,1)$.
Finally, we also expect that one can obtain from our results a full classification of the \emph{Dirac series} of $U(n,2)$, i.e. unitary representations with non-zero Dirac cohomology. 

\medskip
As suggested above, fundamental modules are expected to play an essential role in the classification of the unitary dual of $U(p,q)$. In an upcoming work, we will generalize the work of \cite{B04} (which only considers spherical representations) to study the fundamental unitary dual of $U(p,q)$ with integral infinitesimal characters. 

\medskip
The manuscript is organized as follows: In Section \ref{sec-prelim}, we introduce some combinatorics in \cite{W22} to parametrize all irreducible Hermitian $(\mathfrak{g},K)$-modules $X$ with real infinitesimal characters for $U(p,q)$. 
In particular, we will recall the notion of fundamental cases and some techniques on determining the (non)-unitarity of $X$.
In Section \ref{sec-un1}, we will determine the unitary dual of $U(n,1)$, and Section \ref{sec-un2} continues with the classification of the unitary dual of $U(n,2)$. In Section \ref{sec-coh}, we find out which of the unitary representations for $U(n,1)$ and $U(n,2)$ are weakly-fair $A_{\mathfrak{q}}(\lambda)$-modules. 
Finally, in Section \ref{sec-unip}, we obtain all special unipotent representations for $U(n,1)$ and $U(n,2)$ (in the sense of \cite{BV85}, \cite{BMSZ21} and \cite{BMSZ22}) in terms of their Langlands parameters.

\section{Preliminaries} \label{sec-prelim}
In Section \ref{sec-tempered} and \ref{sec-irrep}, we review the Langlands classification of irreducible $(\mathfrak{g},K)$-modules for $G= U(p,q)$ given in \cite{W22} using {\bf $\lambda_a$-blocks} and {\bf combinatorial $\theta$-stable data}. 
\subsection{$\lambda_a$-blocks and tempered modules} \label{sec-tempered}
\begin{definition}
Let $G = U(p,q)$ with $p + q \equiv \epsilon\ (\text{mod}\ 2)$ ($\epsilon = 0$ or $1$). 
A {\bf $\lambda_a$-block of size $(r,s)$ with content $\gamma$} (or simply a {\bf $\gamma$-block}),
where $0 \leq r \leq p$, $0 \leq s \leq q$ are non-negative integers satisfying $|r-s| \leq 1$, and $\gamma \in \frac{1}{2}\mathbb{Z}$ is of one of the following forms:
\begin{itemize}
\item {\bf Rectangle} of size $(r,r)$:
\begin{center}
\begin{tikzpicture}
\draw
    (0,0) node {\Large $\gamma$}
 -- (0.5,0) node {\Large $\gamma$}		
 -- (1,0) node {\Large $\ldots$}
 -- (1.5,0) node {\Large $\gamma$}
 -- (2,0) node {\Large $\gamma$}
 -- (2,-1) node {\Large $\gamma$}
 -- (1.5,-1) node {\Large $\gamma$}
-- (1,-1) node {\Large $\ldots$}
-- (0.5,-1) node {\Large $\gamma$}
-- (0,-1) node {\Large $\gamma$}
 -- cycle;
\end{tikzpicture} 
\end{center}
where $\gamma + \frac{\epsilon}{2} \in \mathbb{Z}$.

\item {\bf Parallelogram} of size $(r,r)$:
\begin{center}
\begin{tikzpicture}
\draw
    (0,0) node {\Large $\gamma$}
 -- (0.5,0) node {\Large $\gamma$}		
 -- (1,0) node {\Large $\ldots$}
 -- (1.5,0) node {\Large $\gamma$}
 -- (2,0) node {\Large $\gamma$}
 -- (2.5,-1) node {\Large $\gamma$}
 -- (2,-1) node {\Large $\gamma$}
-- (1.5,-1) node {\Large $\ldots$}
-- (1,-1) node {\Large $\gamma$}
-- (0.5,-1) node {\Large $\gamma$}
 -- cycle;
\end{tikzpicture} or 
\begin{tikzpicture}
\draw
    (0,0) node {\Large $\gamma$}
 -- (0.5,0) node {\Large $\gamma$}		
 -- (1,0) node {\Large $\ldots$}
 -- (1.5,0) node {\Large $\gamma$}
 -- (2,0) node {\Large $\gamma$}
 -- (1.5,-1) node {\Large $\gamma$}
 -- (1,-1) node {\Large $\gamma$}
-- (0.5,-1) node {\Large $\ldots$}
-- (0,-1) node {\Large $\gamma$}
-- (-0.5,-1) node {\Large $\gamma$}
 -- cycle;
\end{tikzpicture},
\end{center}
where $\gamma + \frac{\epsilon + 1}{2} \in \mathbb{Z}$.

\item {\bf Trapezoid} of size $(r,r-1)$ or $(r,r+1)$:
\begin{center}
\begin{tikzpicture}
\draw
    (0,0) node {\Large $\gamma$}
 -- (0.5,0) node {\Large $\gamma$}		
 -- (1,0) node {\Large $\ldots$}
 -- (1.5,0) node {\Large $\gamma$}
 -- (2,0) node {\Large $\gamma$}
 -- (1.5,-1) node {\Large $\gamma$}
-- (1,-1) node {\Large $\ldots$}
-- (0.5,-1) node {\Large $\gamma$}
 -- cycle;
\end{tikzpicture} or 
\begin{tikzpicture}
\draw
 (0.5,0) node {\Large $\gamma$}		
 -- (1,0) node {\Large $\ldots$}
 -- (1.5,0) node {\Large $\gamma$}
 -- (2,-1) node {\Large $\gamma$}
 -- (1.5,-1) node {\Large $\gamma$}
-- (1,-1) node {\Large $\ldots$}
-- (0.5,-1) node {\Large $\gamma$}
-- (0,-1) node {\Large $\gamma$}
 -- cycle;
\end{tikzpicture},
\end{center}
where $\gamma + \frac{\epsilon + 1}{2} \in \mathbb{Z}$.

\end{itemize}
\end{definition}

\begin{definition} \label{def-datum}
A {\bf $\lambda_a$-datum for $G$} is a collection of $\gamma_i$-blocks of sizes $(r_i,s_i)$ such that $\sum r_i = p$, $\sum s_i = q$ and all $\gamma_i$ are distinct.
\end{definition}

The following result can be found in Section 3 of \cite{W22}:
\begin{theorem}  \label{thm-datum}
    Let $G = U(p,q)$. There is a $1:1$-correspondence between the following sets:
\begin{itemize}
    \item[(a)] All $\lambda_a$-data of $G$.
    \item[(b)] All $K$-types $\delta \in \widehat{K}$.
    \item[(c)] All tempered representations with real infinitesimal characters.
\end{itemize}
\end{theorem}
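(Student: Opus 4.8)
The plan is to establish the bijections in Theorem \ref{thm-datum} by constructing explicit maps between the three sets and checking they are mutually inverse, leaning heavily on the classical description of $\widehat{K}$ for $K = U(p) \times U(q)$ and Vogan's theory of minimal $K$-types and lowest-weight vectors for tempered representations.

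I would begin with the correspondence (a) $\leftrightarrow$ (b). A $K$-type $\delta \in \widehat{K}$ is a pair of dominant weights $(\mu_1, \mu_2)$ for $\mathfrak{u}(p) \times \mathfrak{u}(q)$, i.e. two weakly decreasing integer sequences of lengths $p$ and $q$. The idea is that a $\lambda_a$-datum records exactly the combinatorial data of how these two sequences interleave: each $\gamma_i$-block of size $(r_i, s_i)$ should be read as "$r_i$ entries of $\mu_1$ and $s_i$ entries of $\mu_2$ all sit at the level $\gamma_i$ (after the standard $\rho$-shift)," and the three shapes — rectangle, parallelogram, trapezoid — encode the three possible ways a cluster of equal-or-adjacent entries from the two sequences can be aligned, with the parity conditions $\gamma + \tfrac{\epsilon}{2} \in \mathbb{Z}$ versus $\gamma + \tfrac{\epsilon+1}{2} \in \mathbb{Z}$ keeping track of whether the $\rho$-shift lands the block on integers or half-integers. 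The constraint $|r_i - s_i| \le 1$ is forced by the fact that after the shift the entries of $\mu_1$ and $\mu_2$ at a common level can differ in multiplicity by at most one, and the conditions $\sum r_i = p$, $\sum s_i = q$, $\gamma_i$ distinct just say we are partitioning all the coordinates. I would make the map concrete on a small example and then argue that reading off the blocks is manifestly invertible: given the blocks one reconstructs the two sequences by listing, for each $\gamma_i$ in decreasing order, the prescribed number of copies with the prescribed shift pattern.

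For (b) $\leftrightarrow$ (c), I would invoke Vogan's classification: every tempered representation with real infinitesimal character is (a constituent of) a standard module induced from a relative discrete series on a Levi, and is determined by its unique minimal $K$-type together with the infinitesimal character; conversely the minimal $K$-type, normalized appropriately, determines the tempered module. The role of "real infinitesimal character" is precisely what rigidifies this: the infinitesimal character of the tempered module attached to $\delta$ is computed from the highest weight of $\delta$ by the Vogan/Zuckerman formula, and realness pins it down so that no continuous parameter is left free. Thus the composite (a) $\to$ (b) $\to$ (c) sends a $\lambda_a$-datum to the tempered module whose minimal $K$-type is the one encoded by the blocks, and the blocks simultaneously exhibit the infinitesimal character (the multiset of $\gamma_i$'s with multiplicities) and the Langlands/Levi data (each block names a factor $U(r_i, s_i)$ of the Levi carrying a discrete series of parameter $\gamma_i$).

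**The main obstacle** I anticipate is not the existence of the maps but the bookkeeping that makes the three block-shapes and their parity conditions exactly match the combinatorics of which tempered module a given $K$-type is minimal for — in particular verifying that the shape (rectangle vs.\ parallelogram vs.\ trapezoid) correctly distinguishes the possible Cayley-transform/Levi structures, and that no two distinct $\lambda_a$-data produce the same $K$-type (injectivity) while every $\delta$ is hit (surjectivity). Since this is precisely the content worked out in Section 3 of \cite{W22}, I would organize the proof as: (1) recall the explicit $\rho$-shift and the formula for the minimal $K$-type of a tempered module from \cite{W22}; (2) show the block-reading map (a) $\to$ (b) is a bijection by exhibiting its inverse; (3) cite the tempered classification for (b) $\leftrightarrow$ (c); and (4) check compatibility of infinitesimal characters, which is the one genuinely computational point. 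The hardest step is (2) together with (4): making sure the parity dichotomy in the block definition is forced by, and conversely forces, the integrality/half-integrality of the infinitesimal character coordinates.
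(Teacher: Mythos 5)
Your outline matches the paper's route: the paper cites \cite[Section 3]{W22} for the full proof and then illustrates the correspondence by giving the explicit shift $\lambda_a \mapsto \lambda_a - \rho(\mathfrak{u}) + 2\rho(\mathfrak{u}\cap\mathfrak{p})$ (plus the parallelogram half-shifts) in Example \ref{eg-lambdaa} for (a)$\leftrightarrow$(b), and by building the tempered module from the same $\theta$-stable parabolic via cohomological/parabolic induction for (a)$\leftrightarrow$(c) --- which is exactly the realization of Vogan's lowest-$K$-type bijection you invoke for (b)$\leftrightarrow$(c). The one point to make precise is the phrase ``unique minimal $K$-type'': that uniqueness does hold for $U(p,q)$ and is part of Vogan's theorem for this group, but a tempered module of a general real reductive group can have several lowest $K$-types, and the parallelogram blocks are exactly what record the $R$-group and the choice among the $2^r$ constituents of $\mathcal{R}_{\mathfrak{q}}(\pi_L)$, so this should be cited rather than treated as automatic.
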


\begin{example}[\cite{W22}, Example 3.6] \label{eg-lambdaa}
We present the correspondence between (a) and (b) in the above theorem with an example: Let $G = U(6,3)$, and consider the $\lambda_a$-datum
\begin{equation} \label{eq-parallel1}
\begin{tikzpicture}
\draw
    (0,0) node {\Large $1$}
 -- (1,0) node {\Large $1$}
-- (0.7,-1) node {\Large $1$}
-- (-0.3,-1) node {\Large $1$}
 -- cycle;

\draw
    (1.5,0) node {\Large $0$}
 -- (2.5,0) node {\Large $0$}
-- (2.3,-1) 
-- (2,-1) node {\Large $0$}
-- (1.7,-1) 
 -- cycle;

\draw
    (2.8,0) 
 -- (3.25,0) node {\Large $-1$}		
-- (3.7,0)
 -- (3.4,-1) 
-- (3,-1) 
 -- cycle;

\draw
    (3.8,0) 
 -- (4.25,0) node {\Large $-2$}		
-- (4.7,0)
 -- (4.4,-1) 
-- (4,-1) 
 -- cycle;
\end{tikzpicture}.
\end{equation}
Then it defines a $\theta$-stable quasisplit parabolic subalgebra 
\begin{equation} \label{eq-thetaeg}
\mathfrak{q}_0 = \mathfrak{l}_0 + \mathfrak{u}_0, \quad \quad \mathfrak{l}_0 = \mathfrak{u}(2,2) + \mathfrak{u}(2,1) + \mathfrak{u}(1,0) + \mathfrak{u}(1,0),
\end{equation}
and $\mathfrak{u}_0$ is determined by the ordering of the contents of the blocks.

To get the $K$-type corresponding to the above datum, consider 
\begin{align*}
\lambda_a &:= (1,1,0,0,-1-2|1,1,0)\\
 \rho(\mathfrak{u}) &= \frac{1}{2}(5,5,-2,-2,-6,-8|5,5,-2)   \\
 2\rho(\mathfrak{u} \cap \mathfrak{p}) &= (1,1,-2,-2,-3,-3|4,4,0)
\end{align*}
and
$$\lambda_a - \rho(\mathfrak{u}) + 2\rho(\mathfrak{u} \cap \mathfrak{p}) = \left(-\frac{1}{2},-\frac{1}{2},-1,-1,-1,-1\Big|\frac{5}{2},\frac{5}{2},1\right).$$
If there are parallelograms of shape \begin{tikzpicture}
\draw
 (0.25,0)	
 -- (0.75,0) 
 -- (0.5,-0.5) 
-- (0,-0.5) 
 -- cycle;
\end{tikzpicture} in the datum, one needs to further subtract all top entries of the block by $1/2$ and add all 
bottom entries of the block by $1/2$ (and analogously for \begin{tikzpicture} \draw
    (0,0)  
 -- (0.5,0) 
 -- (0.75,-0.5) 
 -- (0.25,-0.5) 
 -- cycle;
\end{tikzpicture}). In our example, one has:
$$\left(\frac{-1}{2},\frac{-1}{2},-1,-1,-1,-1\Big|\frac{5}{2},\frac{5}{2},1\right) + \left(\frac{-1}{2},\frac{-1}{2},0,0,0,0\Big|\frac{1}{2},\frac{1}{2},0\right) = (-1,-1,-1,-1,-1,-1|3,3,1).$$
So the datum corresponds to the $K$-type $V_{(-1,-1,-1,-1,-1,-1|3,3,1)}$ under Theorem \ref{thm-datum}.
\end{example}

To relate (a) and (c) in Theorem \ref{thm-datum}, we recall the two constructions of tempered and, more generally, irreducible admissible $(\mathfrak{g},K)$-modules $X$ given in \cite[Chapter 6]{V82}. For the rest of this section, we present these constructions when $G = U(p,q)$ and $X$ is a tempered module, and describe its relationship with $\lambda_a$-data. In Section \ref{sec-irrep}, the results in this section will be generalized to all irreducible modules.

\medskip
\noindent \underline{\bf Method I - Cohomological Induction:} By \cite[Chapter 6.5]{V82}, all irreducible $(\mathfrak{g},K)$-modules $X$ can be realized as (components of) cohomologically inducing a principal series $(\mathfrak{l},L\cap K)$-module $\pi_L$ from a $\theta$-stable quasisplit parabolic subalgebra $\mathfrak{q}_0 = \mathfrak{l}_0 + \mathfrak{u}_0$ of $\mathfrak{g}_0$. We refer the reader to \cite{KV95} for
more details on cohomological induction.
In particular, if $X$ is a tempered representation with real infinitesimal character, then $\pi_L$ is chosen such that the $A$-character of in its inducing module is trivial, where $A$ is the noncompact part of the maximally split Cartan subgroup of $L$. 

\smallskip
In the case when $G = U(p,q)$, the Levi subalgebra $\mathfrak{l}_0$ of a quasisplit $\theta$-stable parabolic subalgebra $\mathfrak{q}_0$ must be of the form $\mathfrak{l}_0 = \bigoplus_i \mathfrak{u}(r_i,s_i)$ with $|r_i - s_i| \leq 1$, $\sum_i r_i = p$ and $\sum_i s_i = q$. So $\pi_L$ must be of the form $\pi_L := \bigotimes_i \pi_{L_i}$, where each $\pi_{L_i}$ is a principal series representation of $L_i := U(r_i,s_i)$. Here are the choices of $\pi_{L_i}$ in the construction of tempered modules.

\smallskip
If $r_i = s_i$, there are two possibilities of $\pi_{L_i}$:
\begin{itemize}
    \item the irreducible {\it (pseudo)spherical principal series representation} with lowest $K$-type $(a,\dots,a|a,\dots,a)$ ($a \in \mathbb{Z}$) and infinitesimal character $(a,\dots,a)$;
    \item the {\bf reducible} {\it non-(pseudo)spherical principal series representation} with two factors, whose lowest $K$-types are $(a+1,\dots,a+1|a,\dots,a)$ and $(a,\dots,a|a+1,\dots,a+1)$ ($a \in \mathbb{Z}$) with infinitesimal character $(\frac{2a+1}{2},\dots,\frac{2a+1}{2})$. 
\end{itemize}

If $r_i \neq s_i$, there is only one possibility of $\pi_{L_i}$:
\begin{itemize}
    \item the irreducible (pseudo)spherical principal series representation with lowest $K$-type $(a,\dots,a|a,\dots,a)$ ($a \in \mathbb{Z}$) and infinitesimal character $(a,\dots,a)$.
\end{itemize}
In particular, $\pi_L$ has $2^r$ irreducible modules, where $r$ is the number of non-(pseudo)spherical $\pi_{L_i}$'s. Then the cohomologically induced module $\mathcal{R}_{\mathfrak{q}}(\pi_L)$ consists of precisely $2^r$ distinct tempered representations. Moreover, all tempered representations can be uniquely constructed in such a way.  

\smallskip
We are now in the position to establish the correspondence of (a) and (c) in Theorem \ref{thm-datum}: Firstly, a $\lambda_a$-datum determines a quasisplit $\theta$-stable parabolic subalgebra $\mathfrak{q}_0 = \mathfrak{l}_0 + \mathfrak{u}_0$ as in \eqref{eq-thetaeg}. 
As for $\pi_L = \bigotimes_i \pi_{L_i}$, one considers $\lambda_a - \rho(\mathfrak{u})$, which amounts to shifting the content of each $\lambda_a$-block from $\lambda_i$ to $\lambda_i'$. Then $\pi_{L_i}$ is taken to be one of the three principal series representations of $L_i = U(r_i,s_i)$ listed above with infinitesimal character $(\lambda_i',\dots,\lambda_i')$. 

For instance, in Example \ref{eg-lambdaa}, the $\theta$-stable parabolic algebra $\mathfrak{q}_0$ is given by \eqref{eq-thetaeg}, and $\pi_L = \pi_{L_1} \otimes \pi_{L_2} \otimes \pi_{L_3} \otimes \pi_{L_4}$ is a principal series representation of $L = U(2,2) \times U(2,1) \times U(1,0) \times U(1,0)$ with infinitesimal character 
$$\lambda_a' := \lambda_a - \rho(\mathfrak{u}) = \left((\frac{-3}{2},\frac{-3}{2}\Big|\frac{-3}{2},\frac{-3}{2}),\ (1,1|1),\ (2|),\ (2|)\right).$$ 
In particular, the only non-(pseudo)spherical, reducible principal series representation is $\pi_{L_1}$ (corresponding to the $(2,2)$-parallelogram). Therefore, both $\pi_L$ and $\mathcal{R}_{\mathfrak{q}}(\pi_L)$ consist of $2^1 = 2$ tempered modules. 

To determine which of the $2$ modules correspond to the datum \eqref{eq-parallel1}, one can look at their lowest $K$-types. Indeed, their lowest $K$-types are $V_{(-1,-1,-1,-1,-1,-1|3,3,1)}$ and $V_{(0,0,-1,-1,-1,-1|2,2,1)}$ respectively. 
In view of Example \ref{eg-lambdaa}, the one with lowest $K$-type $V_{(-1,-1,-1,-1,-1,-1|3,3,1)}$ corresponds to the datum \eqref{eq-parallel1}, while the other tempered module corresponds to the datum:
\begin{equation} \label{eq-parallel2}
\begin{tikzpicture}
\draw
    (-0.3,0) node {\Large $1$}
 -- (0.7,0) node {\Large $1$}
-- (1,-1) node {\Large $1$}
-- (0,-1) node {\Large $1$}
 -- cycle;

\draw
    (1.5,0) node {\Large $0$}
 -- (2.5,0) node {\Large $0$}
-- (2.3,-1) 
-- (2,-1) node {\Large $0$}
-- (1.7,-1) 
 -- cycle;

\draw
    (2.8,0) 
 -- (3.25,0) node {\Large $-1$}		
-- (3.7,0)
 -- (3.4,-1) 
-- (3,-1) 
 -- cycle;

\draw
    (3.8,0) 
 -- (4.25,0) node {\Large $-2$}		
-- (4.7,0)
 -- (4.4,-1) 
-- (4,-1) 
 -- cycle;
\end{tikzpicture}
\end{equation}
by `flipping' the $(2,2)$-parallelogram in datum \eqref{eq-parallel1}.

\medskip
In general, the correspondences in Theorem \ref{thm-datum} is given as follows: Fix a $\lambda_a$-datum, then one gets a $K$-type $\delta$ from the datum by Example \ref{eg-lambdaa}. As for tempered module, one constructs $\mathcal{R}_{\mathfrak{q}}(\pi_L)$ from the datum as described above. In particular, it contains $2^r$ tempered modules, where $r$ is the number of parallelograms in the datum. Among them, there is exactly one with lowest $K$-type equal to $\delta$. This is precisely the tempered module under the correspondence in Theorem \ref{thm-datum}, and the other $2^r - 1$ tempered modules in $\mathcal{R}_{\mathfrak{q}}(\pi_L)$ correspond to $2^r -1$ $\lambda_a$-data by `flipping' the $r$ parallelograms in the original datum.

\medskip
\noindent \underline{\bf Method II - Real Parabolic Induction:} 
A more well-known version Langlands classification of irreducible admissible $(\mathfrak{g},K)$-modules is given by real parabolically inducing a (limit of) discrete series representation. We now describe how our definition of $\lambda_a$-datum fits into this framework (cf. \cite[Chapter 6.6]{V82}). 

For $a \in \mathbb{Z}$, let $\Delta_{1,a} := (\frac{\det}{|\det|})^{a}$ be a discrete series representation of $GL(1,\mathbb{C})$. Then for any $\lambda_a$-datum, we do the following:
\begin{enumerate}
    \item[(i)] For each $(r,s)$-rectangle or trapezoid block in the datum, let $m := \min\{r,s\}$. Form a discrete series representation $\Delta_{m,2\gamma} := (\Delta_{1,2\gamma})^{\otimes m}$ of $GL(1,\mathbb{C})^m$, and remove $m$ copies of $\gamma$ on the top and bottom of the block.
    \item[(ii)] For each $(r,r)$-parallelogram block, form a discrete series representation $\Delta_{r,2\gamma}$ of $GL(1,\mathbb{C})^r$ as in (i), and remove the whole $(r,r)$-block.
    \item[(iii)] Collect all the remaining blocks. They correspond to a discrete series representation $\Sigma$, whose infinitesimal character is determined by the contents of the blocks.
\end{enumerate}
For instance, on the datum \eqref{eq-parallel2}, we have:
\begin{center}
    \begin{tikzpicture}
\draw
    (-0.15,0) node {$1$}
 -- (0.35,0) node {$1$}
-- (0.5,-0.5) node {$1$}
-- (0,-0.5) node {$1$}
 -- cycle;

\draw
    (0.75,0) node {$0$}
 -- (1.25,0) node {$0$}
-- (1.15,-0.5) 
-- (1,-0.5) node {$0$}
-- (0.85,-0.5) 
 -- cycle;

\draw
    (1.4,0) 
 -- (1.625,0) node {$-1$}		
-- (1.85,0)
 -- (1.7,-0.5) 
-- (1.5,-0.5) 
 -- cycle;

\draw
    (1.9,0) 
 -- (2.125,0) node {$-2$}		
-- (2.35,0)
 -- (2.2,-0.5) 
-- (2,-0.5) 
 -- cycle;
\end{tikzpicture}
$\xrightarrow{(i):\ \Delta_{1,0}}$     \begin{tikzpicture}
\draw
    (0.25,0) node {$1$}
 -- (0.75,0) node {$1$}
-- (0.9,-0.5) node {$1$}
-- (0.4,-0.5) node {$1$}
 -- cycle;

\draw
    (0.9,0) 
 -- (1.125,0) node {$0$}		
-- (1.35,0)
 -- (1.2,-0.5) 
-- (1,-0.5) 
 -- cycle;

\draw
    (1.4,0) 
 -- (1.625,0) node {$-1$}		
-- (1.85,0)
 -- (1.7,-0.5) 
-- (1.5,-0.5) 
 -- cycle;

\draw
    (1.9,0) 
 -- (2.125,0) node {$-2$}		
-- (2.35,0)
 -- (2.2,-0.5) 
-- (2,-0.5) 
 -- cycle;
\end{tikzpicture}
$\xrightarrow{(ii):\ \Delta_{2,2}}$  
\begin{tikzpicture}
\draw
    (0.9,0) 
 -- (1.125,0) node {$0$}		
-- (1.35,0)
 -- (1.2,-0.5) 
-- (1,-0.5) 
 -- cycle;

\draw
    (1.4,0) 
 -- (1.625,0) node {$-1$}		
-- (1.85,0)
 -- (1.7,-0.5) 
-- (1.5,-0.5) 
 -- cycle;

\draw
    (1.9,0) 
 -- (2.125,0) node {$-2$}		
-- (2.35,0)
 -- (2.2,-0.5) 
-- (2,-0.5) 
 -- cycle;
\end{tikzpicture} $\stackrel{(iii)}\rightsquigarrow \Sigma$
\end{center}
Then construct the induced module:
\begin{equation} \label{eq-glm}
\mathrm{Ind}_{MAN}^G(\Delta_{2,2} \boxtimes \Delta_{1,0} \boxtimes \Sigma \boxtimes 1) \quad (MA = GL(1,\mathbb{C})^2 \times GL(1,\mathbb{C})^1 \times U(3,0)),
\end{equation}
where $\Sigma$ is the discrete series representation of $U(3,0)$ with infinitesimal character $(0,-1,-2)$. In such a case, the induced module \eqref{eq-glm} is isomorphic to the cohomologically induced module $\mathcal{R}_{\mathfrak{q}}(\pi_L)$ in Method I. It consists of two tempered modules whose lowest $K$-types correspond to the $\lambda_a$-data \eqref{eq-parallel1} and \eqref{eq-parallel2} under Theorem \ref{thm-datum}.

\medskip
We remark that it is possible to obtain a single tempered module. Namely, for Method I, instead of cohomologically inducing the whole non-(pseudo)spherical principal series $\pi_{L_i}$, one can just induce an irreducible factor of it. As for Method II, one changes Step (ii) into:
\begin{center}
    (ii') For each $(r,r)$-parallelogram block, form the 
    discrete series representation $\Delta_{r-1,2\gamma}$ of $GL(1,\mathbb{C})^{r-1}$, and remove $r-1$ copies of $\gamma$ on the top and bottom row of the block.
\end{center}
For instance, on the datum \eqref{eq-parallel2}, we instead do the following:
\begin{center}
    \begin{tikzpicture}
\draw
    (-0.15,0) node {$1$}
 -- (0.35,0) node {$1$}
-- (0.5,-0.5) node {$1$}
-- (0,-0.5) node {$1$}
 -- cycle;

\draw
    (0.75,0) node {$0$}
 -- (1.25,0) node {$0$}
-- (1.15,-0.5) 
-- (1,-0.5) node {$0$}
-- (0.85,-0.5) 
 -- cycle;

\draw
    (1.4,0) 
 -- (1.625,0) node {$-1$}		
-- (1.85,0)
 -- (1.7,-0.5) 
-- (1.5,-0.5) 
 -- cycle;

\draw
    (1.9,0) 
 -- (2.125,0) node {$-2$}		
-- (2.35,0)
 -- (2.2,-0.5) 
-- (2,-0.5) 
 -- cycle;
\end{tikzpicture}
$\xrightarrow{(i):\ \Delta_{1,0}}$     \begin{tikzpicture}
\draw
    (0.25,0) node {$1$}
 -- (0.75,0) node {$1$}
-- (0.9,-0.5) node {$1$}
-- (0.4,-0.5) node {$1$}
 -- cycle;

\draw
    (0.9,0) 
 -- (1.125,0) node {$0$}		
-- (1.35,0)
 -- (1.2,-0.5) 
-- (1,-0.5) 
 -- cycle;

\draw
    (1.4,0) 
 -- (1.625,0) node {$-1$}		
-- (1.85,0)
 -- (1.7,-0.5) 
-- (1.5,-0.5) 
 -- cycle;

\draw
    (1.9,0) 
 -- (2.125,0) node {$-2$}		
-- (2.35,0)
 -- (2.2,-0.5) 
-- (2,-0.5) 
 -- cycle;
\end{tikzpicture}
$\xrightarrow{(ii'):\ \Delta_{1,2}}$  
\begin{tikzpicture}
\draw
    (0.5,0)
-- (0.625,0)   node {$1$}
 -- (0.75,0)
-- (0.9,-0.5) 
-- (0.775,-0.5) node {$1$} 
-- (0.65,-0.5) 
 -- cycle;

\draw
    (0.9,0) 
 -- (1.125,0) node {$0$}		
-- (1.35,0)
 -- (1.2,-0.5) 
-- (1,-0.5) 
 -- cycle;

\draw
    (1.4,0) 
 -- (1.625,0) node {$-1$}		
-- (1.85,0)
 -- (1.7,-0.5) 
-- (1.5,-0.5) 
 -- cycle;

\draw
    (1.9,0) 
 -- (2.125,0) node {$-2$}		
-- (2.35,0)
 -- (2.2,-0.5) 
-- (2,-0.5) 
 -- cycle;
\end{tikzpicture} $\stackrel{(iii)}\rightsquigarrow \Sigma'$
\end{center}
and obtains an irreducible tempered module corresponding to the datum \eqref{eg-lambdaa2}:
\begin{equation} 
    \mathrm{Ind}_{M'A'N'}^G(\Delta_{1,2} \boxtimes \Delta_{1,0} \boxtimes \Sigma' \boxtimes 1) \quad \quad M'A' = GL(1,\mathbb{C}) \times GL(1,\mathbb{C}) \times U(4,1),
\end{equation}
where
$\Sigma'$ corresponds to one (of the two) limit of discrete series representation of $U(4,1)$ with infinitesimal character $(1,0,-1,-2|1)$. 

\subsection{Combinatorial $\theta$-stable data and irreducible modules} \label{sec-irrep}
We now generalize the results in Section \ref{sec-tempered} to construct all irreducible, Hermitian $(\mathfrak{g},K)$-modules with real infinitesimal characters:
\begin{definition}[\cite{W22}, Section 4.3] \label{def-comb}
Let $G = U(p,q)$. A {\bf combinatorial $\theta$-stable datum} is given the following two components:
\begin{itemize}
\item[(a)] A $\lambda_a$-datum of $G$; and
\item[(b)] For each $\gamma_i$-block of size $(r_i,s_i)$ in (a), an element 
$\nu_i \in \mathbb{R}^{\min\{r_i,s_i\}}$ of the form $\nu_i = (\nu_{i,1} \geq \nu_{i,2} \geq \dots \geq -\nu_{i,2} \geq -\nu_{i,1})$ (we call $\nu_i$ the {\bf $\nu$-coordinates corresponding to the $\gamma_i$-block}), so that $\nu = (\nu_1; \nu_2; \dots)$ up to conjugacy.
\end{itemize}
\end{definition}

For each combinatorial $\theta$-stable datum, 
let $\delta$ be the $K$-type obtained from applying Example \ref{eg-lambdaa} on its $\lambda_a$-datum. Then one constructs an irreducible $(\mathfrak{g},K)$-module by slightly modifying the results in Section \ref{sec-tempered}:

\smallskip
\noindent \underline{\bf Method I:} By tensoring the inducing module of the principal series representation $\pi_{L_i}$ with an $A$-character, we replace $\pi_{L_i}$ with $\xi_{L_i}$, whose infinitesimal character equals $(\lambda_i'+\nu_{i,1}, \lambda_i'+\nu_{i,2}, \dots, \lambda_i'-\nu_{i,2}, \lambda_i'-\nu_{i,1})$. It is worth noting that 
the $\xi_{L_i}$ may be irreducible even if $\pi_{L_i}$ is reducible.

\smallskip
\noindent \underline{\bf Method II:} Replace 
$\Delta_{m,2\gamma}$, $\Delta_{r,2\gamma}$ in Step (i) and (ii) by
$J_1(2\gamma;2\nu_1) \otimes \dots \otimes J_1(2\gamma;2\nu_m)$ and $J_1(2\gamma;2\nu_1) \otimes \dots \otimes J_1(2\gamma;2\nu_r)$ respectively, where $J_1(\mu,\nu)$ is the $GL(1,\mathbb{C}) \cong \mathbb{C}^*$-character $z \mapsto (z)^{\frac{\mu+\nu}{2}}(\overline{z})^{\frac{-\mu+\nu}{2}}$.

\smallskip
The cohomologically or real parabolically induced module constructed by both methods above contain exactly one factor $X$, whose lowest $K$-types has a copy of $\delta$. Then we say $X$ is the irreducible $(\mathfrak{g},K)$-module {\bf corresponding to the combinatorial $\theta$-stable datum}. By the Langlands classification, all irreducible $(\mathfrak{g},K)$-modules with real infinitesimal characters arise in such a way. 

\begin{example} \label{eg-lambdaa2}
We continue with Example \ref{eg-lambdaa}, and 
consider the combinatorial $\theta$-stable datum with the $\nu$-coordinates as follows.
For the $1$-block, let $\nu_1=(\frac{3}{4},\frac{1}{2},-\frac{1}{2},-\frac{3}{4})$, and for $0$-block, let $\nu_2=(\frac{1}{3})$. We will describe the $\nu$-coordinates by a curved arrow with $\nu$-coordinates starting from the corresponding block:
\begin{center}
\begin{tikzpicture}
\draw
    (0,0) node {\Large $1$}
 -- (1,0) node {\Large $1$}
-- (0.7,-1) node {\Large $1$}
-- (-0.3,-1) node {\Large $1$}
 -- cycle;

\draw
    (1.5,0) node {\Large $0$}
 -- (2.5,0) node {\Large $0$}
-- (2.3,-1) 
-- (2,-1) node {\Large $0$}
-- (1.7,-1) 
 -- cycle;

\draw
    (2.8,0) 
 -- (3.25,0) node {\Large $-1$}		
-- (3.7,0)
 -- (3.4,-1) 
-- (3,-1) 
 -- cycle;

\draw
    (3.8,0) 
 -- (4.25,0) node {\Large $-2$}		
-- (4.7,0)
 -- (4.4,-1) 
-- (4,-1) 
 -- cycle;

\draw[arrows = {-Stealth[]}]          (0,-1)   to [out=-90,in=-90]node[below]{$\frac{3}{4},\frac{1}{2}$} (-1,-1);
\draw[arrows = {-Stealth[]}]          (0.4,-1)   to [out=-90,in=-90]node[below]{$\frac{-1}{2},-\frac{-3}{4}$} (1.4,-1);

\draw[arrows = {-Stealth[]}]          (1.8,0)   to [out=90,in=90]node[above]{$\frac{1}{3}$} (1.1,0);
\draw[arrows = {-Stealth[]}]          (2.2,0)   to [out=90,in=90]node[above]{$\frac{-1}{3}$} (2.9,0);
\end{tikzpicture}.
\end{center}
Then its corresponding irreducible module $X$ is the lowest $K$-type subquotient of 

\smallskip
\noindent \underline{\bf Method I:} the cohomologically induced module $\mathcal{R}_{\mathfrak{q}}(\xi_L)$ with the same $\theta$-stable parabolic subalgebra $\mathfrak{q}_0$ as in Example \ref{eg-lambdaa}, and the (irreducible) principal series representation 
$\xi_L := \xi_{L_1} \otimes \xi_{L_2} \otimes \xi_{L_3} \otimes \xi_{L_4},$ 
with infinitesimal character
$$\lambda_a'+\nu := \left((\frac{-3}{2}+\frac{3}{4},\frac{-3}{2}+\frac{1}{2}\Big|\frac{-3}{2}-\frac{1}{2},\frac{-3}{2}-\frac{3}{4}),\ (1+\frac{1}{3},1\Big|1-\frac{1}{3}),\ (2|),\ (2|)\right).$$ 
In this case, the non-(pseudo)spherical principal series representation $\xi_{L_1}$ becomes irreducible with two lowest $L_1 \cap K$-types.

\smallskip
\noindent \underline{\bf Method II:} the real parabolically induced module 
$$\mathrm{Ind}_{MAN}^G\left(J_1(2; \frac{3}{2}) \boxtimes J_1(2;1) \boxtimes J_1(0;\frac{2}{3}) \boxtimes \Sigma \boxtimes 1\right)$$ 
with $P = MAN$ as given in \eqref{eq-glm}.

Note that $X$ has infinitesimal character $\Lambda = (\lambda_a, \nu) = (1,1,1,1,0,0,0,-1,-2)+(\frac{3}{4},-\frac{3}{4},\frac{1}{2},-\frac{1}{2},\frac{1}{3},-\frac{1}{3},0,0),$ which can be easily read off from the datum.
\end{example}
To end this section, we make the following remarks:

\begin{itemize}
    \item By induction in stages, the cohomologically induced module $\mathcal{R}_{\mathfrak{q}}(\pi_L)$ in Method I can be written as $\mathcal{R}_{\mathfrak{q}'}(\pi_{L'})$ for any $\theta$-stable parabolic $\mathfrak{q}'$ including $\mathfrak{q}$. The inducing module $\pi_{L'}$ can be read off by combining some of the $\lambda_a$-blocks in the datum.

\item As for the real parabolically induced module in Method II, the unipotent group $N$ can be chosen such that the $\nu$-parameter is dominant. In such a case, we call the real parabolically induced module a {\bf standard module}, whose lowest $K$-type subquotients are precisely the {\it quotients} of the induced module.
\end{itemize}

\subsection{Bottom-layer $K$-types} \label{subsec-bottom}
 In this section and the next section, we will introduce some criteria in determining the (non)-unitarity of $X$. In particular, we will reinterpret these criteria by just looking at the combinatorial $\theta$-stable datum of $X$.

\medskip
Let $X$ be an irreducible $(\mathfrak{g},K)$-module corresponding to the combinatorial $\theta$-stable datum $\mathcal{D}$.
As mentioned in the end of the previous subsection, one is interested in studying $\theta$-stable parabolic subalgebras $\mathfrak{q}'$ containing the quasisplit parabolic subalgebra $\mathfrak{q}$ determined by $\mathcal{D}$.

It is easy to see that the set all such $\mathfrak{q}'$
are in $1-1$ correspondence with the set of partitions of the 
$\lambda_a$-datum in $\mathcal{D}$. For instance, the smallest possible Levi subalgebra $\mathfrak{q}$ corresponds to the finest partition of the $\lambda_a$-datum, each consisting of a single $\gamma$-block. On the other extreme, $\mathfrak{g}$ corresponds to the single partition containing all $\gamma$-blocks. Consequently, all such $\theta$-stable parabolic subalgebras $\mathfrak{q}_0' \supset \mathfrak{q}_0$ corresponds to a partition 
\begin{equation} \label{eq-partition}
    \mathcal{D} = \bigsqcup_{i=1}^k \mathcal{D}_i
\end{equation}
of the $\theta$-stable datum of $X$.

\smallskip
Suppose the partition \eqref{eq-partition} defines a $\theta$-stable parabolic subalgebra $\mathfrak{q}_0' = \mathfrak{l}_0'+\mathfrak{u}_0'$, with 
\[L' = L_1' \times \dots \times L_k' := U(p_1,q_1) \times \dots \times U(p_k,q_k).\] 
Consider the {\it shifted datum}
\begin{equation} \label{eq-shifted}
    \widetilde{\mathcal{D}} = \bigsqcup_{i=1}^k \widetilde{\mathcal{D}_i}
\end{equation}
of $\mathcal{D}$ by subtracting the contents of \eqref{eq-partition} by $\rho(\mathfrak{u}) - \rho(\mathfrak{u}')$ (which is a constant on each $\mathcal{D}_i$). By induction in stages, $X$ is the lowest $K$-type subquotient of the cohomologically induced module
\[\mathcal{R}_{\mathfrak{q}'}(\widetilde{\pi_{L'}}), \quad \widetilde{\pi_{L'}} := \widetilde{\pi_1} \boxtimes \dots \boxtimes \widetilde{\pi_k}\] 
where each $\widetilde{\pi_i}$ is the $(\mathfrak{l}_i',L_i'\cap K)$-module corresponding the the $\theta$-stable datum $\widetilde{\mathcal{D}_i}$ in \eqref{eq-shifted}. 

\begin{definition} \label{def-bottom}
    Retain the above settings. We say a $K$-type $V_{\beta}$ is {\bf $\mathfrak{q}'$-bottom layer} if $\beta$ is of the form
    \begin{equation} \label{eq-bottombeta}
        \beta = (\mu_1; \dots; \mu_k) + 2\rho(\mathfrak{u}'\cap \mathfrak{p}),
    \end{equation}
    where each $V_{\mu_i}$ is a $(L_i' \cap K)$-type of $\widetilde{\pi_i}$. 
\end{definition}

\begin{theorem}[\cite{SV80}] \label{thm-bottom}
    Let $X$ be an irreducible Hermitian $(\mathfrak{g},K)$-module corresponding to the $\theta$-stable datum $\mathcal{D}$. Consider the shifted partition $\widetilde{\mathcal{D}} = \bigsqcup_{i=1}^k \widetilde{\mathcal{D}_i}$ in \eqref{eq-shifted} and the $(\mathfrak{l}_i', L_i' \cap K)$-modules $\widetilde{\pi_i}$ corresponding to $\widetilde{\mathcal{D}_i}$. 
    
    Suppose $V_{\beta} \in \widehat{K}$ is $\mathfrak{q}'$-bottom layer, and is of the form given in \eqref{eq-bottombeta}. Then $V_{\beta}$ appears in $X$ with the same multiplicity and signature as the $(L' \cap K)$-type $V_{\mu_1} \boxtimes \dots \boxtimes V_{\mu_k}$ in $\widetilde{\pi_{L'}} = \widetilde{\pi_1} \boxtimes \dots \boxtimes \widetilde{\pi_k}$.
\end{theorem}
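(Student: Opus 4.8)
The plan is to deduce the statement from the bottom-layer $K$-type theory of Speh--Vogan \cite{SV80} (see also \cite{KV95}), the only new ingredient being the observation that, for $G = U(p,q)$ and a parabolic $\mathfrak{q}'$ coming from a partition of a $\lambda_a$-datum, the dominance hypotheses in that theory are exactly repackaged by the combinatorial condition ``$V_\beta$ is $\mathfrak{q}'$-bottom layer'' of Definition~\ref{def-bottom}. Although $X$ was realized in Section~\ref{subsec-bottom} by induction in stages, the assertion concerns only the single cohomological induction functor $\mathcal{R}^{S}_{\mathfrak{q}'}$ (with $S = \dim(\mathfrak{u}'\cap\mathfrak{k})$) applied to the Hermitian module $\widetilde{\pi_{L'}} = \widetilde{\pi_1}\boxtimes\cdots\boxtimes\widetilde{\pi_k}$; so it suffices to analyze that one step. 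The claim has two halves --- a multiplicity statement and a signature statement --- and I would treat them in that order.

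For the multiplicity half I would invoke the generalized Blattner formula for $\mathcal{R}^{S}_{\mathfrak{q}'}(\widetilde{\pi_{L'}})$: an $(L'\cap K)$-type $V_\mu$ of $\widetilde{\pi_{L'}}$ contributes the $K$-type $V_{\mu + 2\rho(\mathfrak{u}'\cap\mathfrak{p})}$ together with $K$-types whose highest weights are $\mu + 2\rho(\mathfrak{u}'\cap\mathfrak{p})$ plus a nonzero sum of weights of $\mathfrak{u}'\cap\mathfrak{p}$. The first task is to check, from the explicit description of $2\rho(\mathfrak{u}'\cap\mathfrak{p})$ and of the $\mathfrak{u}'\cap\mathfrak{p}$-weights for $U(p,q)$, that a weight $\beta$ of the form \eqref{eq-bottombeta} is produced \emph{only} by the $(L'\cap K)$-type $V_{\mu_1}\boxtimes\cdots\boxtimes V_{\mu_k}$ named in the statement, with no contribution from the higher Blattner terms --- which is precisely what it means for $V_\beta$ to be $\mathfrak{q}'$-bottom layer --- so that $[\mathcal{R}^{S}_{\mathfrak{q}'}(\widetilde{\pi_{L'}}):V_\beta] = [\widetilde{\pi_{L'}}:V_{\mu_1}\boxtimes\cdots\boxtimes V_{\mu_k}]$. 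It then remains to show these copies all lie in the single composition factor $X$. For this one uses that $\delta$ is itself $\mathfrak{q}'$-bottom layer (it equals $\mu_{\min}+2\rho(\mathfrak{u}'\cap\mathfrak{p})$ for the relevant lowest $(L'\cap K)$-type $\mu_{\min}$) and occurs with multiplicity one, and that the bottom-layer map of \cite{SV80} identifies the whole bottom layer with a piece of $\widetilde{\pi_{L'}}$ in a way compatible with the canonical filtration, forcing it into the composition factor generated by $\delta$; the exactness of $\mathcal{R}^{S}_{\mathfrak{q}'}$ in the range forced by the quasisplit origin of $\mathfrak{q}'$ guarantees no other factor absorbs part of it.

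For the signature half I would use that the invariant Hermitian form on $\mathcal{R}^{S}_{\mathfrak{q}'}(\widetilde{\pi_{L'}})$ is transported from the invariant form on $\widetilde{\pi_{L'}}$, via the canonical nondegenerate pairing between $\mathcal{R}^{S}_{\mathfrak{q}'}(\widetilde{\pi_{L'}})$ and $\mathcal{R}^{S}_{\overline{\mathfrak{q}'}}(\widetilde{\pi_{L'}})$ composed with the standard intertwining operator $\mathcal{R}^{S}_{\mathfrak{q}'}(\widetilde{\pi_{L'}}) \to \mathcal{R}^{S}_{\overline{\mathfrak{q}'}}(\widetilde{\pi_{L'}})$; the key point, established in \cite{SV80}, is that on a $\mathfrak{q}'$-bottom-layer $K$-type this composition acts by a single \emph{positive} scalar, independent of $\beta$, so the restriction of the induced form to $V_\beta$ has the same signature as the restriction of the form on $\widetilde{\pi_{L'}}$ to $V_{\mu_1}\boxtimes\cdots\boxtimes V_{\mu_k}$. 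Combined with the multiplicity half, and with the normalization of the form on $X$ inherited from $\mathcal{R}^{S}_{\mathfrak{q}'}(\widetilde{\pi_{L'}})$, this yields the theorem. The main obstacle, and the place demanding care, is exactly this positivity together with the concentration of the bottom layer in $X$: both require the ``good-range'' type inequalities on the infinitesimal character, which hold here precisely because $\mathfrak{q}'$ arises from a $\lambda_a$-datum --- the shift $\rho(\mathfrak{u})-\rho(\mathfrak{u}')$ defining $\widetilde{\mathcal{D}}$ keeps the inducing parameters in range --- and the verification that Definition~\ref{def-bottom} is equivalent to the dominance condition of \cite{SV80} is what makes the whole reduction go through.
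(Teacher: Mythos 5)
The paper does not reprove this theorem; it is quoted from Speh--Vogan \cite{SV80} (see also \cite[Chapter 6]{KV95}), so the comparison is against that source. Your outline does follow the right circle of ideas from those references — the bottom-layer map, Blattner-style bookkeeping for multiplicities, and positivity of the induced Hermitian form on bottom-layer $K$-types for signatures — and in that sense it is the same approach.

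The substantive gap is in your closing paragraph, where you try to make the argument go through by invoking ``good-range type inequalities'' that you claim ``hold here precisely because $\mathfrak{q}'$ arises from a $\lambda_a$-datum,'' and ``exactness of $\mathcal{R}^{S}_{\mathfrak{q}'}$ in the range forced by the quasisplit origin of $\mathfrak{q}'$.'' Neither is true. A general $\mathfrak{q}'\supset\mathfrak{q}$ coming from a partition of the $\lambda_a$-datum is only in the fair range (Lemma \ref{lem-good}(b)), not the good or weakly good range, so the vanishing theorem (Theorem \ref{vanishing}) and the irreducibility/unitarity consequences you lean on are not available. Indeed, if the inducing data were automatically weakly good, Theorem \ref{vanishing} would already settle unitarity and the bottom-layer theorem would be superfluous; the point of the theorem is precisely to extract signature information \emph{outside} the weakly good range. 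The step you are missing — which is the actual content of \cite{SV80} — is a lowest-$K$-type argument showing that the bottom-layer $K$-types lie off the radical of the Hermitian form on the standard module (equivalently, that the long intertwining operator is nonzero there), so they descend to the Langlands subquotient $X$ with their multiplicities and signatures intact; this uses no range hypothesis and no exactness. Two smaller points: the positive scalar in your signature argument need not be independent of $\beta$, and only positivity is used; and the identification of Definition \ref{def-bottom} with the dominance conditions of \cite{SV80}, which you rightly flag as the hinge, is exactly where the $U(p,q)$ combinatorics of $2\rho(\mathfrak{u}'\cap\mathfrak{p})$ and the $\Delta(\mathfrak{u}'\cap\mathfrak{p})$ weights would have to be written out rather than asserted.
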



\begin{example} \label{eg-parallel}
    Let $G=U(5,2)$, consider the combinatorial $\theta$-stable datum 
    
\begin{center}
$\mathcal{D} = $ \begin{tikzpicture} 
\draw
    (-0.8,0) 
 -- (-0.6,0) node {$2$}
-- (-0.4,0) 
-- (-0.52,-0.8) 
-- (-0.68,-0.8) 
 -- cycle;

\draw
    (0,0) 
 -- (0.2,0) node {$1$}
-- (0.4,0) 
-- (0.2,-0.8) 
-- (0,-0.8) node {$1$}
-- (-0.2,-0.8) 
 -- cycle;

\draw
    (0.8,0) 
 -- (1,0) node {$\frac{1}{2}$}
-- (1.2,0)   
-- (1.2,-0.8)   
-- (1,-0.8)  node {$\frac{1}{2}$}
-- (0.8,-0.8)
 -- cycle; 

\draw
    (1.6,0) 
 -- (1.8,0) node {$0$}
-- (2,0) 
-- (1.88,-0.8) 
-- (1.72,-0.8) 
 -- cycle;

\draw
    (2.4,0) 
 -- (2.6,0) node {$-1$}
-- (2.8,0) 
-- (2.68,-0.8) 
-- (2.52,-0.8) 
 -- cycle;
\end{tikzpicture}
 \end{center}
One may (for instance) partition $\mathcal{D}$ into $\mathcal{D}_1 = \{2\text{-block}\}$, 
$\mathcal{D}_2 = \{1\text{-block}\}$, and
$\mathcal{D}_3 = \{\frac{1}{2}\text{-block},  0\text{-block}, (-1)\text{-block}\}$ with $\mathfrak{l}_0' = \mathfrak{u}(1,0) + \mathfrak{u}(1,1) + \mathfrak{u}(3,1)$.

Consider the shifted datum $\widetilde{\mathcal{D}_2}$: its corresponding module $\widetilde{\pi_2}$ is the non-spherical principal series $L_2' = U(1,1)$ tensored with some power of $(\frac{\det}{|\det|})^{\frac{1}{2}}$. 
Assume that the $\nu$ parameter for $\widetilde{\mathcal{D}_2}$ is non-zero, then $\widetilde{\pi_2}$ is non-unitary at the two lowest $K$-types $V_{\mu_2} = V_{(a+1|a)}$ and $V_{\mu_2'} = V_{(a|a+1)}$. 

Take 
\begin{center} $\beta = (\mu_1,\mu_2,\mu_3) + 2\rho(\mathfrak{u}' \cap \mathfrak{p})$ \quad \quad  and \quad \quad $\beta' = (\mu_1,\mu_2',\mu_3) + 2\rho(\mathfrak{u}' \cap \mathfrak{p}),$\end{center}
where $V_{\mu_1}$ and $V_{\mu_3}$ are the lowest $K$-types of $\widetilde{\pi_1}$ and $\widetilde{\pi_3}$ respectively (lowest $(L_i \cap K)$-types are always $\mathfrak{q}'$-bottom layer).
Then the $(L' \cap K)$-types $V_{\mu_1} \boxtimes V_{\mu_2} \boxtimes V_{\mu_3}$ and $V_{\mu_1} \boxtimes V_{\mu_2'} \boxtimes V_{\mu_3}$ must have opposite signatures in the $(\mathfrak{l}', L' \cap K)$-module $\widetilde{\pi_{L'}} = \widetilde{\pi_1} \boxtimes \widetilde{\pi_2} \boxtimes \widetilde{\pi_3}$, and Theorem \ref{thm-bottom} implies that $X$ must be non-unitarity on $V_{\beta}$ and $V_{\beta'}$.
\end{example}

\begin{remark} \label{rmk-shifted}
In practice, one may just look at $\mathcal{D}_i$ rather than the shifted datum $\widetilde{\mathcal{D}_i}$. In doing so, the corresponding module of $\mathcal{D}_i$ may occur at the double cover of $U(p_i,q_i)$. 
Nevertheless, the modules corresponding to $\mathcal{D}_i$ and $\widetilde{\mathcal{D}_i}$ only differ from each other by tensoring with the unitary character $(\frac{\det}{|\det|})^{\frac{t}{2}}$ for some $t \in \mathbb{Z}$. From now on, we will just study $\mathcal{D}_i$, and write 
$\pi_i$ (instead of $\widetilde{\pi_i}$) as the $(\mathfrak{l}_i',L_i' \cap K)$-module corresponding to the shifted datum
$\widetilde{\mathcal{D}_i}$.
\end{remark}

For later purposes, we introduce the following:
\begin{definition} \label{def-upto}
Let $X$ be an irreducible, Hermitian $(\mathfrak{g},K)$-module with lowest $K$-types $\{\delta_1, \dots, \delta_r\}$, and $V$ be a finite-dimensional $K$-module. 
Suppose
$\{\chi_1, \dots, \chi_s\}$ be the set of $K$-types of $X$ appearing in the tensor products $\delta_i \otimes V$
for $1 \leq i \leq r$. Then the set 
$$\{\delta_1, \dots, \delta_r, \chi_1, \dots, \chi_s\}$$ 
are called the $K$-types of $X$ {\bf up to level $V$}.
And we say $X$ is {\bf non-unitary up to level $V$} if the Hermitian form of $X$ has indefinite signatures on the $K$-types up to level $V$.
\end{definition}

In \cite[Proposition 4.12]{W22}, one can determine whether the $K$-types $X$ up to level $V = \mathfrak{p}$ is $\mathfrak{q}'$-bottom layer by just looking at the shapes of its $\theta$-stable datum $\mathcal{D} = \bigsqcup_{i=1}^k \mathcal{D}_i$. This gives a powerful tool in determining (non)-unitarity of $X$.

\subsection{Good range condition}
\label{subsec-good}
In this section, we recall the notions of (weakly) good range and (weakly) fair range in cohomological induction. A more detailed account can be found in \cite{KV95}.

\begin{definition} \label{good range}
Suppose that the $(\mathfrak{l}, L\cap K)$-module $Z$ has a infinitesimal character $\lambda$. We say that $Z$ or $\lambda$ is in the {\bf weakly good range} or that $Z$ is {\bf weakly good} (relative to $\mathfrak{q}$ and $\mathfrak{g}$) if
\begin{equation}\label{weakly-good}
{\rm Re} \langle \lambda + \rho(\mathfrak{u}), \alpha \rangle \ge 0, \quad \forall \alpha\in \Delta(\mathfrak{u}, \mathfrak{h}).
\end{equation}
If all the inequalities in \eqref{weakly-good} are strict, then $Z$ is said to be in the {\bf good} range.

We say that $Z$ is {\bf weakly fair} if
\begin{equation}\label{weakly-fair}
{\rm Re} \langle \lambda + \rho(\mathfrak{u}), \alpha|_{\mathfrak{z}} \rangle \ge 0, \quad \forall \alpha\in \Delta(\mathfrak{u}, \mathfrak{h}),
\end{equation}
where $\mathfrak{z}$ is the center of $L$. If all the inequalities in \eqref{weakly-fair} are strict, then $Z$ is said to be in the {\bf fair} range.
\end{definition}

As for why the above definition is significant, let us state the following results (Theorem 0.51 of \cite{KV95}, Theorem 1.3 of \cite{V84}).

\begin{theorem}\label{vanishing}
Let $Z$ be an $(\mathfrak{l},L \cap K)$-module of finite length with  infinitesimal character $\lambda$, and suppose $Z$ is weakly good. Then
\begin{itemize}
\item[(a)] If $Z$ is irreducible, then $\mathcal{L}_{\mathfrak{q}}(Z)$ is irreducible or zero. Moreover, if $Z$ is good, then $Z$ is nonzero and irreducible if and only if $\mathcal{L}_{\mathfrak{q}}(Z)$ is nonzero and irreducible.
\item[(b)] If $Z$ is unitary, then $\mathcal{L}_{\mathfrak{q}}(Z)$ is unitary. Moreover, if $Z$ is good, $Z$ is unitary if and only if $\mathcal{L}_{\mathfrak{q}}(Z)$ is unitary.
\end{itemize}
\end{theorem}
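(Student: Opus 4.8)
The plan is to reduce everything to the \emph{good} range by a translation-functor argument, and in the good range to exploit the basic properties of the Zuckerman derived functors $\mathcal{L}_j = \mathcal{L}_{\mathfrak{q},j}$. First I would record the vanishing theorem: in the weakly good range $\mathcal{L}_j(Z) = 0$ for $j \neq S := \dim(\mathfrak{u}\cap\mathfrak{k})$, so that $\mathcal{L}_{\mathfrak{q}}(Z) = \mathcal{L}_S(Z)$ is the only surviving cohomology. This follows from a homological/spectral-sequence argument combined with an estimate on the infinitesimal characters of the $\mathfrak{l}\cap\mathfrak{k}$-types that can occur, which reduces the statement to the trivial case $\mathfrak{l} = \mathfrak{g}$.

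For part (a), in the good range one shows that the ``bottom layer'' map is an isomorphism on the relevant $K$-isotypic pieces and that $\mathcal{L}_S$ preserves infinitesimal character; feeding this into an Euler--Poincar\'e count (by the vanishing theorem $\sum_j (-1)^j[\mathcal{L}_j(Z)] = [\mathcal{L}_S(Z)]$, and this matches the Jordan--H\"older count coming from $Z$) shows that $\mathcal{L}_S(Z)$ is irreducible or zero whenever $Z$ is irreducible, and that it is nonzero when $Z$ is good because a suitable bottom-layer $K$-type survives. To pass from good to weakly good, apply a translation functor $\psi$ carrying an infinitesimal character $\lambda_0$ in the good range to $\lambda$; since $\mathcal{L}_{\mathfrak{q}}$ commutes with translation (via the Zuckerman functor identities) and translation sends irreducibles to irreducibles-or-zero, the weakly good case follows from the good case.

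For part (b), when $Z$ is unitary with invariant Hermitian form $\langle\cdot,\cdot\rangle_Z$, one transports it through the derived functor: $\mathcal{L}_S(Z)$ carries a canonical $(\mathfrak{g},K)$-invariant Hermitian form built from $\langle\cdot,\cdot\rangle_Z$ together with the Shapovalov-type pairing on $\bigwedge^{\mathrm{top}}\mathfrak{u}$, and the content is that this form is positive semidefinite in the (weakly) good range. Following Vogan, one checks positivity first on the bottom-layer $K$-types, where the form restricts to a twist of $\langle\cdot,\cdot\rangle_Z$ and is automatically positive, and then propagates positivity to all $K$-types via the $\mathfrak{u}$-cohomology description of $\mathcal{L}_S(Z)$ and a signature-character identity, again reducing the weakly good case to the good case by translation and continuity of signatures. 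For the converse when $Z$ is good: if $\mathcal{L}_S(Z)$ is unitary, restrict its form to a bottom-layer $K$-type; by Theorem \ref{thm-bottom}-style reasoning this restriction has the same signature as the corresponding $(L\cap K)$-type of $Z$, and since bottom-layer $K$-types detect all $(L\cap K)$-types of $Z$ in the good range, $Z$ must be unitary.

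The main obstacle is the positivity claim in part (b): constructing the induced form is formal, but showing it remains positive semidefinite throughout the \emph{weakly} good range (not only the good range) requires the translation/continuity argument together with careful bookkeeping of which $K$-types are bottom layer, and it is precisely at the chamber boundary that the weakly-good hypothesis is used. The vanishing theorem and the Euler-characteristic count in (a) are comparatively routine once the infinitesimal-character estimates are in place. A full proof would simply cite \cite{KV95} and \cite{V84}.
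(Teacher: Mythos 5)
The paper states this theorem without proof, citing Theorem~0.51 of \cite{KV95} and Theorem~1.3 of \cite{V84}, and your proposal ultimately defers to the same references, so the two ``proofs'' coincide in their only substantive step. Your intermediate sketch names the right ingredients (vanishing theorem, bottom-layer map, transferred Shapovalov-type form), but if spelled out two links would not hold: the Euler--Poincar\'e/Blattner count determines the $K$-type multiplicities of $\mathcal{L}_S(Z)$ as a virtual module and does not by itself produce irreducibility --- in \cite{KV95} that comes from the structural fact that $\mathcal{L}_S$ sends a standard $(\mathfrak{l},L\cap K)$-module to a standard $(\mathfrak{g},K)$-module together with uniqueness of the Langlands quotient --- and the proposed reduction from weakly good to good via translation functors is not the route \cite{KV95} takes, since both the vanishing and signature theorems there are established directly on the closure of the good cone precisely because translation to a chamber wall can annihilate or merge constituents, which is the delicate phenomenon the theorem is about.
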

 
We now describe how to interpret Definition \ref{good range} using combinatorial $\theta$-stable datum. Let $\mathfrak{q}$ be the quasisplit parabolic subalgebra corresponding to the $\theta$-stable datum $\mathcal{D}$, and 
$\mathfrak{q}' \supset \mathfrak{q}$ be any
$\theta$-stable parabolic subalgebra containing $\mathfrak{q}$ corresponding to the  partition 
$\mathcal{D} = \bigsqcup_{i=1}^k \mathcal{D}_i$
of $\mathcal{D}$. Then the infinitesimal character $\Lambda = (\lambda_a,\nu)$ of its corresponding module $X$ is also partitioned into
\[\Lambda = (\Lambda_1; \cdots; \Lambda_k)\]
up to permutation of coordinates. Define the $i^{th}$-{\bf segment} of the partition by the line segment $[e_i, b_i]$, where $e_i$ (resp. $b_i$) is the largest (resp. smallest) number
in $\Lambda_i$. 

\begin{example} \label{eg-good1}
    Let $G = U(5,4)$, and $\Sigma$ be the irreducible representation with $\theta$-stable datum of the form: 
    \begin{center}
\begin{tikzpicture}
\draw
    (0,0) node (1) {\Large $0$}
 -- (1,0) node (2) {\Large $0$} 
 -- (0.75,-1)
-- (0.5,-1) node {\Large $0$}
-- (0.25,-1) 
 -- cycle;

\draw[arrows = {-Stealth[]}]          (1.3,-1.2)   to [out=-90,in=-90]node[below]{$\frac{7}{2}$} (-3,-1.2);
\draw[arrows = {-Stealth[]}]          (1.7,-1.2)   to [out=-90,in=-90]node[below]{$\frac{-7}{2}$} (6,-1.2);

\draw
    (1.25,0) 
 -- (1.5,0) node {\Large $\frac{-1}{2}$}
-- (1.75,0)
-- (1.75,-1) 
-- (1.5,-1) node {\Large $\frac{-1}{2}$}
-- (1.25,-1)
 -- cycle;

\path (1) edge     [loop above]       node[] {$0$}         (1);
\path (2) edge     [loop above]       node[] {$0$}         (2);

\draw
    (-0.25,0) 
 -- (-0.5,0) node {\Large $\frac{1}{2}$}
-- (-0.75,0)
-- (-0.75,-1) 
-- (-0.5,-1) node {\Large $\frac{1}{2}$}
-- (-0.25,-1)
 -- cycle;
\draw[arrows = {-Stealth[]}]          (-0.7,-1.2)   to [out=-90,in=-90]node[below]{$\frac{1}{2}$} (-1.5,-1.2);
\draw[arrows = {-Stealth[]}]          (-0.3,-1.2)   to [out=-90,in=-90]node[below]{$\frac{-1}{2}$} (0.5,-1.2);

\draw
    (-1,0) 
 -- (-1.1,0) node (3) {} 
 -- (-1.35,0) node {\Large $1$}
-- (-1.6,0) node (4) {}
-- (-1.7,0) 
-- (-2.2,-1) 
-- (-1.85,-1) node {\Large $1$}
-- (-1.5,-1)
 -- cycle;

\path (3) edge     [loop above]       node[] {$0$}         (3);
\path (4) edge     [loop above]       node[] {$0$}         (4);
\end{tikzpicture}
\end{center}
    Suppose we partition the combinatorial $\theta$-stable datum of $\Sigma$ by $\mathcal{D} = \mathcal{D}_1 \sqcup \mathcal{D}_2$, where
    \[\mathcal{D}_1 := \{1\text{-block},\ \frac{1}{2}\text{-block}\} \quad \quad \quad \mathcal{D}_2 := \{0\text{-block},\ \frac{-1}{2}\text{-block}\}\]
Then $\Lambda_1 = (1,1,\frac{1}{2}+\frac{1}{2},\frac{1}{2}-\frac{1}{2}) = (1,1,1,0)$ and 
    $\Lambda_2 = (0,0+0,0+0,\frac{-1}{2}+\frac{7}{2},\frac{-1}{2}-\frac{7}{2}) = (0,0,0,3,-4)$, and hence
    \[[e_1,b_1]=[1,0], \quad \quad \quad [e_2, b_2] = [3,-4].\]
\end{example}

The following lemma determines whether $X$
is cohomologically induced from some proper $\theta$-stable parabolic subalgebra in good range by looking at its combinatorial $\theta$-stable data:

\begin{lemma} \label{lem-good}
Retain the setting in the above paragraphs. Then the following holds for $X$:
\begin{itemize}
    
    \item[(a)] 
    $X$
    is cohomologically induced from a $\theta$-stable parabolic subalgebra $\mathfrak{q}'$ in the good range if and only if the segments of the partition of the combinatorial $\theta$-stable datum of $X$ corresponding to $\mathfrak{q}'$ satisfy the following inequalities:
    \[e_1 \geq b_1 > e_2 \geq b_2 > \cdots > e_k \geq b_k.\]

\item[(b)] $X$ is always cohomologically induced from $\mathfrak{q}'$ in the fair range.
    \end{itemize}
\end{lemma}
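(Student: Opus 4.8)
The plan is to reduce both statements to the infinitesimal-character bookkeeping of cohomological induction, starting from the realization of $X$ via Method I.

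\textbf{Setup.} I would first fix, compatibly with the partition $\mathcal{D}=\bigsqcup_{i=1}^{k}\mathcal{D}_i$, the representative $\Lambda=(\Lambda_1;\dots;\Lambda_k)$ of the infinitesimal character of $X$. Since $\mathfrak{q}'\supseteq\mathfrak{q}$ we have $\mathfrak{u}'\subseteq\mathfrak{u}$, which forces each $\mathcal{D}_i$ to be a consecutive run among the content-ordered blocks of $\mathcal{D}$; consequently $\rho(\mathfrak{u}')$ is constant on each $\mathcal{D}_i$ and $\Delta(\mathfrak{u}',\mathfrak{h})=\{\,e_a-e_b : a\in\mathcal{D}_i,\ b\in\mathcal{D}_j,\ i<j\,\}$. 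By Method I and induction in stages, $X=\mathcal{R}_{\mathfrak{q}'}(Z)$, where $Z$ is an irreducible subquotient of the module $\pi_{L'}=\pi_1\boxtimes\cdots\boxtimes\pi_k$ attached to the (shifted) datum grouped according to the $\mathcal{D}_i$'s (one may take $Z=\pi_{L'}$ when each $\pi_i$ is irreducible); keeping track of the $\rho$-shifts in the construction, the infinitesimal character $\lambda_Z$ of $Z$ satisfies $\lambda_Z+\rho(\mathfrak{u}')=\Lambda$. Since the good and fair conditions only involve the roots in $\Delta(\mathfrak{u}',\mathfrak{h})$, which are ``between blocks'', neither the shift $\rho(\mathfrak{u}')$ nor the $W(\mathfrak{l}')$-ambiguity in $\lambda_Z$ affects the relevant inequalities, so I may evaluate them directly on $\Lambda$.

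\textbf{Part (a).} By Definition \ref{good range}, $Z$ (equivalently $\pi_{L'}$, which has the same infinitesimal character) is in the good range for $\mathfrak{q}'$ exactly when $\langle\lambda_Z+\rho(\mathfrak{u}'),\alpha\rangle=\langle\Lambda,\alpha\rangle>0$ for every $\alpha\in\Delta(\mathfrak{u}',\mathfrak{h})$. Writing $\alpha=e_a-e_b$ with $a\in\mathcal{D}_i$, $b\in\mathcal{D}_j$ and $i<j$, this is the requirement $\Lambda_a>\Lambda_b$ for all such $a,b$, i.e.\ $b_i>e_{i+1}$ for every $i$; since always $e_i\geq b_i$, this is precisely the chain $e_1\geq b_1>e_2\geq b_2>\cdots>e_k\geq b_k$. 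For ``$\Leftarrow$'': if the chain holds then $Z$ is good, so $\mathcal{R}_{\mathfrak{q}'}(Z)$ is irreducible and nonzero by Theorem \ref{vanishing}(a), hence equals $X$, exhibiting $X$ as cohomologically induced from $\mathfrak{q}'$ in the good range. For ``$\Rightarrow$'': if $X=\mathcal{R}_{\mathfrak{q}'}(Z')$ with $Z'$ in the good range, then by the structure of the good range (Theorem \ref{vanishing}(a)) $Z'$ is forced to be $Z$ (up to a unitary twist), so $Z$ is good and the chain holds.

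\textbf{Part (b).} Here I would test against the center $\mathfrak{z}$ of $\mathfrak{l}'$, consisting of the diagonal elements that are constant on each $\mathcal{D}_i$. For $\alpha=e_a-e_b$ with $a\in\mathcal{D}_i$, $b\in\mathcal{D}_j$ and $i<j$, the pairing $\langle\Lambda,\alpha|_{\mathfrak{z}}\rangle$ is a positive multiple of $\overline{\Lambda}_i-\overline{\Lambda}_j$, where $\overline{\Lambda}_i$ is the average of the coordinates of $\Lambda$ lying in $\mathcal{D}_i$. Because the $\nu$-coordinates attached to each $\gamma$-block are symmetric and hence sum to zero, $\overline{\Lambda}_i$ equals the block-size-weighted average of the contents $\gamma$ of the blocks comprising $\mathcal{D}_i$; in particular $\overline{\Lambda}_i\geq\min\{\gamma:\gamma\text{-block}\subseteq\mathcal{D}_i\}$ and $\overline{\Lambda}_{i+1}\leq\max\{\gamma:\gamma\text{-block}\subseteq\mathcal{D}_{i+1}\}$. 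As the $\mathcal{D}_i$'s are consecutive runs in the strictly decreasing content order and the contents are distinct, $\min\{\gamma\in\mathcal{D}_i\}>\max\{\gamma\in\mathcal{D}_{i+1}\}$, so $\overline{\Lambda}_1>\overline{\Lambda}_2>\cdots>\overline{\Lambda}_k$ and every inequality $\langle\Lambda,\alpha|_{\mathfrak{z}}\rangle>0$ holds, even strictly. Hence $Z$ is in the fair range for $\mathfrak{q}'$, and $X=\mathcal{R}_{\mathfrak{q}'}(Z)$ is cohomologically induced from $\mathfrak{q}'$ in the fair range.

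\textbf{Main obstacle.} The delicate point is the identity $\lambda_Z+\rho(\mathfrak{u}')=\Lambda$: one must check via induction in stages that the $\rho$-shifts built into Method I and into the shifted datum combine correctly, so that $\mathcal{R}_{\mathfrak{q}'}$ really lands on $X$ with this infinitesimal character, and one must handle the case where $\pi_{L'}$ is reducible by passing to an irreducible subquotient $Z$ and noting it inherits the good/fair range property (it has the same infinitesimal character, so this is automatic). Once this is in place, the translation into the segment chain in (a) and the averaging argument in (b) are routine.
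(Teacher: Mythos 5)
Your proof is correct and takes the same approach as the paper, which simply states that the lemma "follows immediately from Definition \ref{good range}" without further elaboration; you fill in the details of that one-line assertion, including the consecutive-run observation for $\mathfrak{q}'\supseteq\mathfrak{q}$, the $\rho(\mathfrak{u}')$-bookkeeping, and the averaging argument showing the fair-range inequalities are automatically strict.
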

The lemma follows immediately from Definition \ref{good range}. For instance, the above lemma
implies that the module $\Sigma$ in Example \ref{eg-good1} cannot be cohomologically induced from $\mathfrak{q}' = \mathfrak{l}'+\mathfrak{u}'$ with $\mathfrak{l}'_0 = \mathfrak{u}(2,2) \oplus \mathfrak{u}(3,2)$ in good range.
Indeed, one can easily check that $\Sigma$ cannot be cohomologically induced from any proper parabolic subalgebra $\mathfrak{q}' \supset \mathfrak{q}$ of $\mathfrak{g}$ in the good range. In other words, $\Sigma$ is \emph{fully supported}.

\bigskip
Now we can apply Theorem \ref{vanishing} to study the unitarity of $(\mathfrak{g},K)$-modules by just looking at its $\theta$-stable datum.
For example, consider the irreducible representation corresponding to the $\theta$-stable datum:
\begin{center}\begin{tikzpicture}
\foreach \x in {0,0.6,1.2,2.8,3.4,4.0,4.6,5.2}
	\draw (\x+0,0)--(\x+0.4,0)--(\x+0.25,-0.7) --(\x+0.15,-0.7)--cycle; 
\draw (1.8,0)--(2.6,0)--(2.6,-0.7)--(1.8,-0.7)--cycle; 
\node at (0.2,0.1) {$\frac{5}{2}$};
\node at (0.8,0.1) {$\frac{3}{2}$};
\node at (1.4,0.1) {$\frac{1}{2}$};
\node at (3.0,0.1) {$-\frac{1}{2}$};
\node at (3.6,0.1) {$-\frac{3}{2}$};
\node at (4.2,0.1) {$-\frac{5}{2}$}; 
\node at (4.8,0.1) {$-\frac{7}{2}$};
\node at (5.4,0.1) {$-\frac{9}{2}$};
\node at (2.0,0.1) {$0$};
\node at (2.4,0.1) {$0$};
\node at (2.0,-0.8) {$0$};
\node at (2.4,-0.8) {$0$};

\draw[arrows = {-Stealth[]}]          (1.8,0)   to [out=90,in=0]node[above]{$\frac{4}{5},\frac{3}{10}$} (1,0.5);
\draw[arrows = {-Stealth[]}]          (2.6,0)   to [out=90,in=180]node[above]{$\frac{-3}{10},\frac{-4}{5}$} (3.4,0.5);
\end{tikzpicture}\end{center}  
in $U(10,2)$. By partitioning the datum into 
\[\left\{\frac{5}{2}\text{-block},\frac{3}{2}\text{-block} \right\} \sqcup \left\{\frac{1}{2}\text{-block}, \dots, \frac{-1}{2}\text{-block} \right\} \sqcup \left\{\frac{-3}{2}\text{-block}, \dots, \frac{-9}{2}\text{-block}\right\}\]
Then by Lemma \ref{lem-good}, the module is cohomologically induced from $\mathfrak{q}' = \mathfrak{l}'+\mathfrak{u}'$ given by the above partition in the good range. Since the first and last datum is a unitary $U(2,0)$ and $U(4,0)$-module, its unitarity is determined by the middle datum for $U(4,2)$  (up to some $\rho(\mathfrak{u})$-shift of coordinates):
\begin{center}\begin{tikzpicture}
\foreach \x in {1.2,2.8}
	\draw (\x+0,0)--(\x+0.4,0)--(\x+0.25,-0.7) -- (\x+0.14,-0.7) --cycle; 
\draw (1.8,0)--(2.6,0)--(2.6,-0.7)--(1.8,-0.7)--cycle;  
\node at (1.4,0.1) {$\frac{1}{2}$};
\node at (3.0,0.1) {$-\frac{1}{2}$}; 
\node at (2.0,0.1) {$0$};
\node at (2.4,0.1) {$0$};
\node at (2.0,-0.8) {$0$};
\node at (2.4,-0.8) {$0$};

\draw[arrows = {-Stealth[]}]          (1.8,0)   to [out=90,in=0]node[above]{$\frac{4}{5},\frac{3}{10}$} (1,0.5);
\draw[arrows = {-Stealth[]}]          (2.6,0)   to [out=90,in=180]node[above]{$\frac{-3}{10},\frac{-4}{5}$} (3.4,0.5);
\end{tikzpicture}\end{center}  
By the results in Section \ref{subsec-i} below, it is not unitary. So Theorem \ref{vanishing}(b) implies that the full representation is also not unitary.


\subsection{Fundamental representations}
\label{sec-fund} 
In \cite{W22}, the first named author introduced the notion of fundamental cases and fundamental data to study the unitary dual of $U(p,q)$ and possibly for all real reductive groups:
\begin{definition}\label{Fund_datum} Let $G$ be any real connected reductive group. An irreducible, Hermitian $(\mathfrak{g},K)$-module with real infinitesimal character is called {\bf fundamental} if its $\lambda_a$-value satisfies $\langle \lambda_a, \alpha^{\vee} \rangle \leq 1$
for all simple roots $\alpha \in \Delta(\mathfrak{g},\mathfrak{h})$.
\end{definition}

In the case when $G = U(p,q)$, and $X$
is an irreducible module corresponding to 
the combinatorial $\theta$-stable datum $\mathcal{D}$, 
the above definition can be translated into:
$X$ is fundamental if and only if all the neighbouring $\lambda_a$-blocks of $\mathcal{D}$ have differences $\leq 1$.

Note that one can always partition any combinatorial $\theta$-stable datum $\mathcal{D}$ into: 
\[\mathcal{D} = \bigsqcup_i \mathcal{F}_i\] 
so that each sub-datum $\mathcal{F}_i$ is fundamental (from now on, we call such datum and its corresponding $(\mathfrak{g},K)$-module {\bf fundamental}), and the gap between two neighbouring sub-data is $> 1$, 
i.e. the $\lambda_a$-datum of $X$ is of the form:

\begin{center}
\begin{tikzpicture}
\draw (-1.1,-0.3) node {\scriptsize{fund.}};
\draw (-1.1,-0.7) node {\scriptsize{datum 1}};
\draw (-1.1,-0.5) circle (0.6);

\draw [|-|] (-0.5,-1) -- node[below] {$>1$} (0.5,-1);

\draw (1,-0.3) node {\scriptsize{fund.}};
\draw (1,-0.7) node {\scriptsize{datum 2}};
\draw (1,-0.5) circle (0.6);

\draw   [|-|] (1.7,-1) -- node[below] {$>1$} (2.7,-1);

\draw (3.4,-0.3) node {\scriptsize{fund.}};
\draw (3.4,-0.7) node {\scriptsize{datum 3}};
\draw (3.4,-0.5) circle (0.6);

\draw (4.7,-0.5) node {\Large $\dots$};
\end{tikzpicture}
\end{center}
For instance, the following $\lambda_a$-datum is partitioned into two fundamental data:

\begin{center}
\begin{tikzpicture}

\draw [|-|](-0.2,-1.16) -- node[below] {\small{fund. datum 1}} (3.86,-1.16);

\draw
    (0,0) 
 -- (0.16,0) node {$3$}
-- (0.33,0) 
-- (0.24,-0.66) 
-- (0.1,-0.66) 
 -- cycle;

\draw
    (0.66,0) 
 -- (0.86,0) node {$2$}
-- (1,0) 
-- (0.9,-0.66) 
-- (0.76,-0.66) 
 -- cycle;

\draw
    (1.33,0) node {$1$}
 -- (2,0) node {$1$}		
 -- (1.8,-0.66) 
-- (1.66,-0.66) node {$1$}
-- (1.56,-0.66) 
 -- cycle;

\draw
    (2.33,0) 
 -- (2.5,0) node {$0$}
-- (2.66,0) 
-- (2.56,-0.66) 
-- (2.42,-0.66) 
 -- cycle;

\draw
    (3,0) node {$\frac{-1}{2}$}
 -- (3.66,0) node {$\frac{-1}{2}$}
-- (3.66,-0.66) node {$\frac{-1}{2}$}
-- (3,-0.66) node {$\frac{-1}{2}$}
 -- cycle;

\draw [|-|](5,-1.16) -- node[below] {\small{fund. datum 2}} (5.82,-1.16);

\draw
    (5.3,0) 
 -- (5.42,0) 
-- (5.53,-0.66) 
-- (5.36,-0.66) node {$-2$}
-- (5.2,-0.66) 
 -- cycle;
\end{tikzpicture} 
\end{center}

Here is a conjecture on the structure of unitary dual for fundamental representations:
\begin{conjecture} \label{conj-fundamental}
Let $X$ be a fundamental module with real infinitesimal character $\Lambda$. Suppose $\langle \Lambda, \alpha^{\vee} \rangle > 1$ for some simple root $\alpha \in \Delta(\mathfrak{g},\mathfrak{h})$, then $X$ is not unitary up to level $\mathfrak{p}$.
\end{conjecture}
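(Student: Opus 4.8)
The plan is to reduce to an explicit Langlands/intertwining-operator computation on a single fundamental block, and exhibit two $K$-types among those up to level $\mathfrak{p}$ carrying opposite signatures. First I would reduce to the case where $\mathcal{D}$ consists of a single fundamental datum $\mathcal{F}$: if $\mathcal{D} = \bigsqcup_i \mathcal{F}_i$ with gaps $> 1$ between the blocks, then by Lemma \ref{lem-good}(a) the module $X$ is cohomologically induced in the \emph{good} range from $\mathfrak{q}'$ whose Levi has factors corresponding to the $\mathcal{F}_i$; the hypothesis $\langle \Lambda, \alpha^\vee\rangle > 1$ for some simple $\alpha$ then forces this failure of the inequality to occur \emph{within} one of the $\mathcal{F}_i$ (the cross-block simple coroots already have pairing $> 1$ but those are not the obstruction — the relevant $\alpha$ must live inside one block, since $\Lambda$ is built by perturbing $\lambda_a$ by $\nu$-coordinates and only within a fundamental block can the $\nu$-perturbation push a neighbouring difference above $1$). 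By Theorem \ref{vanishing}(b) and the bottom-layer principle (Theorem \ref{thm-bottom}), non-unitarity up to level $\mathfrak{p}$ of the $\mathcal{F}_i$-factor, witnessed on bottom-layer $K$-types, propagates to $X$; and \cite[Proposition 4.12]{W22} guarantees the relevant $K$-types up to level $\mathfrak{p}$ remain $\mathfrak{q}'$-bottom layer. So it suffices to treat $X$ fundamental with a single block structure where some neighbouring $\lambda_a$-difference is $1$ but the corresponding $\Lambda$-difference exceeds $1$.

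Next I would analyze this single-block situation directly. Here $X$ is (a Langlands subquotient of) a module induced as in Method II from characters $J_1(2\gamma; 2\nu_j)$ of $GL(1,\mathbb{C})$ together with a (limit of) discrete series $\Sigma$ on a smaller $U(p',q')$; the condition $\langle\Lambda,\alpha^\vee\rangle > 1$ translates into a concrete inequality on a $\nu$-coordinate relative to the adjacent content (roughly, two consecutive infinitesimal-character entries differing by more than $1$ in a context where the corresponding $\lambda_a$-entries differ by exactly $1$). The idea is to restrict attention to a rank-one (or rank-two) sub-situation — the $SL(2)$ or $U(2,1)$/$U(1,1)$ "string" generated by $\alpha$ — and use the classical fact that a complementary-series parameter stays unitary only up to the first reducibility point, which occurs precisely at $\langle\Lambda,\alpha^\vee\rangle = 1$. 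Past that point the standard intertwining operator changes sign between the lowest $K$-type $\delta$ and a specific $K$-type $\delta \otimes (\text{root vector for }\alpha)$ appearing in $\delta \otimes \mathfrak{p}$. Concretely I would locate the two $K$-types $V_{\delta}$ and $V_{\delta'}$ with $\delta' = \delta + (\text{a weight in }\Delta(\mathfrak{p}))$ on which the long intertwining operator has eigenvalues of opposite sign, using the standard product formula for the $c$-function / Kazhdan-Lusztig–type signature computation in the relevant rank-one factor, exactly as in the $U(10,2)$ worked example at the end of Section \ref{subsec-good} and in Example \ref{eg-parallel}.

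The main obstacle I anticipate is the bookkeeping needed to guarantee that the sign-changing $K$-type pair is genuinely \emph{up to level} $\mathfrak{p}$ in the ambient $X$ — i.e. that $\delta'$ lies in $\delta \otimes \mathfrak{p}$ for an actual lowest $K$-type $\delta$ of $X$ and not merely in some deeper tensor power — and that the bottom-layer transfer through $\mathfrak{q}'$ does not accidentally shift $\delta'$ out of that range. This is precisely where \cite[Proposition 4.12]{W22} must be invoked carefully: one needs the shape of the fundamental datum $\mathcal{F}$ (all neighbouring $\lambda_a$-blocks at distance $\leq 1$) to force the relevant $\rho(\mathfrak{u}'\cap\mathfrak{p})$-shift to be small enough that $\mathfrak{q}'$-bottom-layer $K$-types still detect the rank-one non-unitarity. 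A secondary subtlety is handling the reducible non-(pseudo)spherical principal series factors (the parallelogram blocks), where the relevant $\nu_i$ may become irreducible after the $A$-character twist; there one argues as in Example \ref{eg-parallel}, using that the two lowest $(L_i'\cap K)$-types of such a factor have opposite signatures once $\nu_i \neq 0$. Assembling these, Theorem \ref{thm-bottom} delivers indefiniteness of the Hermitian form of $X$ on $K$-types up to level $\mathfrak{p}$, which is the assertion.
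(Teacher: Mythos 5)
The statement you are proving is left as a \emph{conjecture} in this paper for general real reductive $G$; the only theorem here is Theorem~\ref{thm-fund}, which asserts the $U(p,q)$ case and attributes the proof to \cite{W22}, so there is no argument in the present manuscript to compare against. Your proposal therefore has to stand on its own, and it has two genuine problems.

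First, your opening reduction is vacuous. The hypothesis of the conjecture already says $X$ is \emph{fundamental}, i.e.\ $\langle\lambda_a,\alpha^\vee\rangle\leq 1$ for every simple $\alpha$, which in the $U(p,q)$ picture means all neighbouring $\lambda_a$-blocks differ by at most $1$. There is consequently no decomposition $\mathcal{D}=\bigsqcup_i\mathcal{F}_i$ with gaps $>1$: the whole datum is a single $\mathcal{F}$. The reduction you describe — invoking Lemma~\ref{lem-good}(a) and bottom-layer $K$-types to pass from a general datum to a fundamental one — is the step the paper performs \emph{after} Theorem~\ref{thm-fund} to deduce consequences of the conjecture for non-fundamental modules; it plays no role in proving the conjecture itself.

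Second, and more seriously, the mechanism in your second paragraph is incorrect. You assert that the complementary series ``stays unitary only up to the first reducibility point, which occurs precisely at $\langle\Lambda,\alpha^\vee\rangle=1$.'' That equality fails already for $U(n,1)$: take the trapezoid datum flanked by $(1,0)$-blocks as in the fifth case of Theorem~\ref{thm-un1fund}, with $a=b=1$ and content $\gamma=0$, so $\Lambda=(\nu,1,0,-1,-\nu)$ (after sorting when $\nu>1$). The induced module has its first reducibility point at $\nu=1$, yet the sorted simple-root differences are $\{\nu-1,1,1,\nu-1\}$ for $1<\nu<2$, all $\leq 1$, and the module remains unitary all the way to $\nu=\min\{a,b\}+1=2$. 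The threshold in the conjecture is \emph{not} the first reducibility point but the point at which the re-sorted infinitesimal character first develops a gap $>1$; the parameter typically passes through several reducibility points while $\Lambda$ remains fundamentally-parallelepiped-bounded, because the coordinates of $\Lambda$ re-sort as $\nu$ grows. A correct argument must track the sign of the long intertwining operator across all those earlier reducibility points and show the signature on the relevant level-$\mathfrak{p}$ $K$-types remains definite until the sorted gap exceeds $1$; the product formula in Section~\ref{subsec-un1} is the right kind of tool, but your sketch conflates the two thresholds and so does not deliver the conclusion. Finally, since your argument uses $\lambda_a$-blocks and Method~II throughout, it could at best address $U(p,q)$, i.e.\ Theorem~\ref{thm-fund}, not the conjecture as stated for arbitrary connected reductive $G$.
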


For $G = U(p,q)$, we have the following:
\begin{theorem}[\cite{W22}] \label{thm-fund}
The above conjecture holds for $U(p,q)$.
\end{theorem}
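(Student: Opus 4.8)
The plan is to reduce the statement to a finite, explicit check on fundamental $\theta$-stable data for $U(p,q)$ by exploiting the bottom-layer machinery of Section \ref{subsec-bottom}. Let $X$ be the fundamental module attached to a combinatorial $\theta$-stable datum $\mathcal{D}$, and suppose $\langle \Lambda, \alpha^{\vee}\rangle > 1$ for some simple root $\alpha$. Because $X$ is fundamental, all neighbouring $\lambda_a$-blocks of $\mathcal{D}$ differ by at most $1$; hence the failure $\langle \Lambda, \alpha^{\vee}\rangle > 1$ must come entirely from the $\nu$-coordinates. The first step is to isolate the offending simple root: translating $\langle\Lambda,\alpha^\vee\rangle>1$ into the combinatorics, $\alpha$ corresponds either to a pair of adjacent $\nu$-coordinates inside a single $\gamma_i$-block whose difference exceeds $1$, or to a coordinate straddling two consecutive blocks $\gamma_i,\gamma_{i+1}$ where the combined shift pushes the gap past $1$. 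In every case one can read off a two-block (or one-block) sub-datum $\mathcal{D}_{i}\sqcup\mathcal{D}_{i+1}$ (or just $\mathcal{D}_i$) of $\mathcal{D}$ that already witnesses the violation.

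Next I would invoke Theorem \ref{thm-bottom} together with \cite[Proposition 4.12]{W22}: partition $\mathcal{D}$ so that this offending piece is one part $\mathcal{D}_j$ and the rest are grouped arbitrarily. Since we only need non-unitarity \emph{up to level $\mathfrak{p}$}, and since the relevant $K$-types up to level $\mathfrak{p}$ are $\mathfrak{q}'$-bottom layer for the partition chosen (this is exactly what \cite[Proposition 4.12]{W22} guarantees for fundamental shapes), the signature of $X$ on those $K$-types equals the signature of the corresponding $(L_j'\cap K)$-types in $\pi_j$. So it suffices to prove: a fundamental $(\mathfrak l_j', L_j'\cap K)$-module $\pi_j$ on the small sub-datum, with $\langle\Lambda_j,\alpha^\vee\rangle>1$, is non-unitary up to level $\mathfrak p_j$ as an $L_j'$-module. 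This is now a statement about $U(a,b)$ with $a+b$ small — essentially $U(1,1)$, $U(2,1)$, $U(2,2)$ and their relatives — where the reducible non-(pseudo)spherical principal series and the complementary-series endpoints are completely understood (cf. Knapp–Speh \cite{KS82} for $U(2,2)$, and the rank-one case for $U(n,1)$).

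The concrete computation I would run for each small case: write $\pi_j$ via Method II as a real parabolic induction $\mathrm{Ind}(J_1(2\gamma;2\nu_1)\otimes\cdots\otimes \Sigma\otimes 1)$; when the gap exceeds $1$ one is past the first reducibility/complementary-series wall, so by the standard intertwining-operator sign computation (or by exhibiting the relevant $(L_j'\cap K)$-types of opposite signature as in Example \ref{eg-parallel}) the form is indefinite on the bottom-layer $K$-types reachable by tensoring with $\mathfrak p_j$. Lifting back up via Theorem \ref{thm-bottom} then gives indefiniteness on $K$-types of $X$ up to level $\mathfrak p$, which is the conclusion.

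The main obstacle I expect is twofold. First, one must be careful that the $\mathfrak{q}'$-bottom-layer property genuinely covers \emph{all} the $K$-types up to level $\mathfrak p$ that one needs — for certain fundamental shapes (e.g. when a parallelogram is adjacent to the offending block) the naive partition may fail to make the decisive $K$-type bottom-layer, and one has to choose the partition, or iterate over several partitions, more cleverly; this is precisely the content one must extract from \cite[Proposition 4.12]{W22}. Second, the rank-two base cases $U(2,1)$ and $U(2,2)$ require an honest enumeration: one must verify that for \emph{every} fundamental $\lambda_a$-shape on these groups, crossing the $\langle\cdot,\alpha^\vee\rangle=1$ threshold forces a sign change rather than, say, a hidden unitarity point. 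This is a finite but somewhat delicate case analysis, and it is where the bulk of the work lies; the rest of the argument is a formal reduction.
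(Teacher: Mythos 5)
The paper does not prove this theorem; it is imported by citation from \cite{W22}, so there is no internal argument here to compare against. Evaluated on its own terms, your proposed reduction has a gap that sits at the heart of the assertion. You localize the offending simple root $\alpha$ to a sub-datum $\mathcal{D}_j$ consisting of one or two $\gamma$-blocks and then assert that the resulting problem lives on $U(a,b)$ with $a+b$ small. That is false whenever the offending pair of $\nu$-coordinates lies inside a single $\gamma_i$-block of size $(r,r)$ or $(r\pm 1,r)$: the finest partition of a combinatorial $\theta$-stable datum in Definition \ref{def-comb} keeps individual $\gamma$-blocks intact, so $\mathcal{D}_j$ is a single-block datum for $U(r,r)$ (or $U(r+1,r)$, $U(r,r+1)$) with $r$ unbounded. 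The bottom-layer reduction via Theorem \ref{thm-bottom} therefore decreases the number of $\gamma$-blocks but does not bound the rank of the base case. That base case --- a single large block, i.e.\ a spherical or pseudospherical principal series of $U(r,r)$ with two adjacent $\nu$-coordinates differing by more than one --- is precisely the nontrivial content of Conjecture \ref{conj-fundamental} and is not covered by the $U(2,2)$ (Knapp--Speh) or rank-one $U(n,1)$ computations you cite. A direct argument valid for arbitrary $r$, e.g.\ tracking the long intertwining operator's signature on level-$\mathfrak{p}$ $K$-types for such a principal series, is needed; an induction on the number of blocks alone cannot reach it.

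A secondary point: the appeal to \cite[Prop.\ 4.12]{W22} is not free. The paper itself warns that whether level-$\mathfrak{p}$ $K$-types are $\mathfrak{q}'$-bottom layer depends on the block shapes of the chosen partition, and ``partition so that the offending piece is one part and the rest grouped arbitrarily'' does not automatically make the decisive $K$-type bottom layer --- particularly when parallelograms or trapezoids sit adjacent to the offending block. You flag this yourself in your last paragraph, but without a verification it remains a genuine open step in the argument, not a routine check.
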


To show how one can apply Theorem \ref{thm-fund} to study the full unitary dual of $U(p,q)$, suppose $X$ is \emph{any} irreducible Hermitian module with real infinitesimal character, whose $\theta$-stable datum $\mathcal{D} = \bigsqcup_i \mathcal{F}_i$ is partitioned into fundamental data. If there exists $\mathcal{F}_i$ such that its corresponding $(\mathfrak{l}_i', L_i \cap K)$-module $\pi_i$ (with infinitesimal character $\Lambda_i$ up to a $\rho(\mathfrak{u})$-shift) satisfies the hypothesis of Conjecture \ref{conj-fundamental}. Then by Theorem \ref{thm-fund}, $\pi_i$ is not unitary up to level $\mathfrak{l}_i' \cap \mathfrak{p}$. As a result, the bottom layer $K$-type arguments in Section \ref{subsec-bottom} imply that $X$ is also not unitary up to level $\mathfrak{p}$. Combining the above arguments with Lemma \ref{lem-good}(a), one can prove Vogan's \emph{fundamental parallelepiped conjecture} for $U(p,q)$ (cf. \cite[Section 7]{W22}).

\medskip
Inspired by the above line of arguments, we propose a possible strategy to determine the unitary dual for all $G$: firstly, one determines the unitary dual of fundamental representations of $G$. Special focus is put on the \emph{non-unitary certificates} for these representations, i.e. $K$-types that have opposite signatures with the lowest $K$-types of the fundamental representations. Then one may apply bottom layer $K$-type arguments to relate the non-unitary certificates of the fundamental representations with that of all general representations. For instance, in the above paragraph, such a relationship is built for level $\mathfrak{p}$ $K$-types. In the next two sections, we will apply these line of ideas to obtain the full unitary dual of $U(n,1)$ and $U(n,2)$.



\section{Unitary dual of $U(n,1)$}
\label{sec-un1}

\subsection{Intertwining operators of $U(n,1)$} \label{subsec-un1}

We recall the intertwining operators of $U(n,1)$ between the representations which are induced from one-dimensional representations of minimal parabolic subgroup (cf. \cite[Section 10]{KS83}).

Let $P_1=M_1A_1N_1$ be a minimal parabolic subgroup of $U(n,1)$, where $M_1A_1 = GL(1,\mathbb{C}) \times U(n-1,0)$.
For $m \in \frac{1}{2}\mathbb{Z}$ and $\nu \in \mathbb{R}_{\geq 0}$, consider the intertwining operator on the principal series representations:
\begin{equation} \label{eq-iota}
\iota : \mathrm{Ind}_{M_1A_1N_1}^{U(n,1)}(J_1(2m;2\nu) \boxtimes \mathrm{triv} \boxtimes 1)\longrightarrow  \mathrm{Ind}_{M_1A_1N_1}^{U(n,1)}(J_1(2m;-2\nu) \boxtimes \mathrm{triv} \boxtimes 1). \end{equation}
If $|2m| \leq n$, then the lowest $K$-type of the above principal series representations is $V_{(0,\cdots,0|2m)}$. More explicitly, the other $K$-types are of the form $V_{(k,0,\cdots,0,-l\ |\ 2m-k+l)}$ for $k, l\geq 0$, all appearing with multiplicity one.
By normalizing $\iota$ such that it is equal to identity on the lowest $K$-type, $\iota$ acts on $V_{(k,0,\cdots,0,-l\ |\ 2m-k+l)}$ by a scalar
\[\prod_{j=0}^{k-1}\big(\frac{2j-(2\nu+2m-n)}{2j+(2\nu-2m+n)}\big)\cdot \prod^{l-1}_{j=0}\big(\frac{2j-(2\nu-2m-n)}{2j+(2\nu+2m+n)}\big).\]
In particular, if $0\leq 2\nu < \min\{-2m+n,2m+n\}$, then $\iota$ is positive definite for all $k, l \geq 0$.

\subsection{Fundamental cases} \label{subsec-un1fund}
We begin by classifying which fundamental representations of $U(n,1)$ are unitary, i.e. the fundamental 
unitary dual of $U(n,1)$.
Firstly, if the $\theta$-stable datum of $X$ consists only of $(1,0)$ or $(0,1)$-blocks, then it must be a discrete series representation. 
If there is a $(1,1)$-parallelogram block, then Example \ref{eg-parallel} implies that $\nu = 0$ for this block (which corresponds to a limit of discrete series representation), otherwise it is not unitary up to level $\mathfrak{p}$. 

Therefore, by shifting the module by some power of the unitary character $\frac{\det}{|\det|}$ (which amounts to shifting the content of the $\lambda_a$-blocks by a constant), it suffices to study the combinatorial $\theta$-stable datum of the form:

\begin{center} \begin{tikzpicture}

\draw (0,-0.3) node  {$\cdots \ast \cdots$};

\draw (0.875,0.05) node {\small $-\frac{n-4}{2}$}
    (0.75,0) 
 -- (0.875,0) 
-- (1,0) 
-- (0.925,-0.5) 
-- (0.875,-0.5) 
-- (0.825,-0.5) 
 -- cycle;

\draw (-0.875,0.05) node {\small $\frac{n-4}{2}$}
    (-0.75,0) 
 -- (-0.875,0)
-- (-1,0) 
-- (-0.925,-0.5) 
-- (-0.875,-0.5) 
-- (-0.825,-0.5) 
 -- cycle;

\draw (1.825,0.05) node {\small $-\frac{n-2}{2}$}
    (1.7,0) 
 -- (1.825,0)
-- (1.95,0) 
-- (1.875,-0.5) 
-- (1.825,-0.5) 
-- (1.775,-0.5) 
 -- cycle;

\draw (-1.825,0.05) node {\small $\frac{n-2}{2}$}
    (-1.7,0) 
 -- (-1.825,0)
-- (-1.95,0) 
-- (-1.875,-0.5) 
-- (-1.825,-0.5) 
-- (-1.775,-0.5) 
 -- cycle;\end{tikzpicture},
 \quad  where $\ast =$ \begin{tikzpicture}
\draw (0.0,0.05) node {\small $m$} 
 (0.0,-0.5) node {\small $m$} 
    (-0.15,0) 
 -- (0.0,0)  
 -- (0.15, 0) 
-- (0.15,-0.5) 
-- (0.0,-0.5) 
-- (-0.15,-0.5) 
 -- cycle;
\end{tikzpicture} (if $\frac{n}{2} - m \in \mathbb{Z}+\frac{1}{2}$) or 
\begin{tikzpicture}
\draw (0.0,0.05) node {\small $m$\ $m$}
    (-0.3,0) 
 -- (0.0,0) 
 -- (0.3, 0) 
-- (0.15,-0.5) 
-- (0.0,-0.5) node {\small $m$}
-- (-0.15,-0.5) 
 -- cycle;
\end{tikzpicture} (if $\frac{n}{2} - m \in \mathbb{Z}$)
 \end{center}  
for $|2m| \geq n-1$ (note that the $|2m| = n$ case corresponds to a $(1,1)$-parallelogram). We only consider the first case in full detail. By the discussions in Section \ref{sec-irrep}, the induced module corresponding to the combinatorial $\theta$-stable datum
\begin{center}
\begin{tikzpicture}
\draw
    (-0.15,0) 
 -- (0.0,0) node {\scriptsize $m$}
 -- (0.15, 0) 
-- (0.15,-0.5) 
-- (0.0,-0.5) node {\scriptsize $m$}
-- (-0.15,-0.5) 
 -- cycle;

\draw (0.825,0.05) node {\scriptsize $m-\frac{1}{2}$}
    (0.7,0) 
 -- (0.825,0) 
-- (0.95,0) 
-- (0.875,-0.5) 
-- (0.825,-0.5) 
-- (0.775,-0.5) 
 -- cycle;

\draw (-0.825,0.05) node {\scriptsize $m+\frac{1}{2}$}
    (-0.7,0) 
 -- (-0.825,0) 
-- (-0.95,0) 
-- (-0.875,-0.5) 
-- (-0.825,-0.5) 
-- (-0.775,-0.5) 
 -- cycle;

\draw (-1.25,-0.25) node {$\dots$};
\draw (1.35,-0.25) node {$\dots$};

\draw (1.825,0.05) node {\scriptsize $-\frac{n-2}{2}$}
    (1.7,0) 
 -- (1.825,0) 
-- (1.95,0) 
-- (1.875,-0.5) 
-- (1.825,-0.5) 
-- (1.775,-0.5) 
 -- cycle;

\draw (-1.825,0.05) node {\scriptsize $\frac{n-2}{2}$}
    (-1.7,0) 
 -- (-1.825,0) 
-- (-1.95,0) 
-- (-1.875,-0.5) 
-- (-1.825,-0.5) 
-- (-1.775,-0.5) 
 -- cycle;

 \draw[arrows = {-Stealth[]}]          (-0.15,0)   to [out=90,in=90]node[above]{$\nu$} (-2.3,0);
\draw[arrows = {-Stealth[]}]          (0.15,0)   to [out=90,in=90]node[above]{$-\nu$} (2.3,0);
 \end{tikzpicture}
 \end{center}  
is $\mathrm{Ind}_{M_1A_1N_1}^{U(n,1)}(J_1(2m;2\nu) \boxtimes \mathrm{triv} \boxtimes 1)$. By choosing $N$ appropriately, the fundamental module $X$ corresponding to this datum is the image of the intertwining operator $\iota$ in Equation \eqref{eq-iota}.
So the results in Section \ref{subsec-un1} imply that if 
\[0 \leq \nu \leq \min\{\frac{n-2}{2}-m,m+\frac{n-2}{2}\}+1,\]
then the Hermitian form on the $K$-types in $X$ are all positive definite. Otherwise, the form is indefinite on the lowest $K$-types $V_{(0,\dots,0|2m)}$ and the level $\mathfrak{p}^-$ $K$-type $V_{(0,\dots,0,-1|2m+1)}$. In conclusion, we have:

    


\begin{theorem} \label{thm-un1fund}
    Let $X$ be a fundamental representation of $U(n,1)$. Then its $\theta$-stable datum must be of the form:

\begin{center}
\begin{tikzpicture}
\draw
    (0,0) 
 -- (0.125,0) node {$\gamma$}
-- (0.25,0) 
-- (0.175,-0.5) 
-- (0.125,-0.5) node {\mbox{}}
-- (0.075,-0.5) 
 -- cycle;
 \end{tikzpicture} \ , \
\begin{tikzpicture}
\draw
    (0.325,-0.5) 
 -- (0.425,-0.5) 
-- (0.5,-1) 
-- (0.375,-1) node {$\gamma$}
-- (0.25,-1) 
 -- cycle;
\end{tikzpicture} \ , \
\begin{tikzpicture}
\draw
    (0,0) node (1) {}
 -- (0.15,0) node {$\gamma$}
 -- (0.3, 0) node (2) {}
-- (0.15,-0.5) 
-- (0,-0.5) node {$\gamma$}
-- (-0.15,-0.5) 
 -- cycle;
\path (1) edge     [loop above]       node[] {$0$}         (1);
\path (2) edge     [loop above]       node[] {$0$}         (2);
 \end{tikzpicture} \ , \
\begin{tikzpicture}
\draw
    (-0.15,0) node (1) {}
 -- (0.0,0) node {$\gamma$}
 -- (0.15, 0) node (2) {}
-- (0.3,-0.5) 
-- (0.15,-0.5) node {$\gamma$}
-- (0,-0.5) 
 -- cycle;
\path (1) edge     [loop above]       node[] {$0$}         (1);
\path (2) edge     [loop above]       node[] {$0$}         (2);
 \end{tikzpicture} \ , \
\begin{tikzpicture}
\draw
    (-0.3,0) 
 -- (0.0,0) node {\small $\gamma$\ $\gamma$}
 -- (0.3, 0) 
-- (0.15,-0.5) 
-- (0.0,-0.5) node {\small $\gamma$}
-- (-0.15,-0.5) 
 -- cycle;

\draw (0.825,0.05) node {\small $\gamma-1$}
    (0.7,0) 
 -- (0.825,0) 
-- (0.95,0) 
-- (0.875,-0.5) 
-- (0.825,-0.5) 
-- (0.775,-0.5) 
 -- cycle;

\draw (-0.825,0.05) node {\small $\gamma+1$}
    (-0.7,0) 
 -- (-0.825,0) 
-- (-0.95,0) 
-- (-0.875,-0.5) 
-- (-0.825,-0.5) 
-- (-0.775,-0.5) 
 -- cycle;

\draw (-1.25,-0.25) node {$\dots$};
\draw (1.35,-0.25) node {$\dots$};

\draw (1.825,0.05) node {\small $\gamma-b$}
    (1.7,0) 
 -- (1.825,0) 
-- (1.95,0) 
-- (1.875,-0.5) 
-- (1.825,-0.5) 
-- (1.775,-0.5) 
 -- cycle;

\draw (-1.825,0.05) node {\small $\gamma+a$}
    (-1.7,0) 
 -- (-1.825,0) 
-- (-1.95,0) 
-- (-1.875,-0.5) 
-- (-1.825,-0.5) 
-- (-1.775,-0.5) 
 -- cycle;

\draw[arrows = {-Stealth[]}]          (-0.3,0)   to [out=90,in=90]node[above]{$\nu$} (-2.3,0);
\draw[arrows = {-Stealth[]}]          (0.3,0)   to [out=90,in=90]node[above]{$-\nu$} (2.3,0);
 \end{tikzpicture} ($0 \leq \nu \leq \min\{a,b\}+1$)\ ,\ 
 \begin{tikzpicture}
\draw
    (-0.15,0) 
 -- (0.0,0) node {\scriptsize $\gamma$}
 -- (0.15, 0) 
-- (0.15,-0.5) 
-- (0.0,-0.5) node {\scriptsize $\gamma$}
-- (-0.15,-0.5) 
 -- cycle;

\draw (0.825,0.05) node {\scriptsize $\gamma-\frac{1}{2}$}
    (0.7,0) 
 -- (0.825,0)  
-- (0.95,0) 
-- (0.875,-0.5) 
-- (0.825,-0.5) 
-- (0.775,-0.5) 
 -- cycle;

\draw (-0.825,0.05) node {\scriptsize $\gamma+\frac{1}{2}$}
    (-0.7,0) 
 -- (-0.825,0)  
-- (-0.95,0) 
-- (-0.875,-0.5) 
-- (-0.825,-0.5) 
-- (-0.775,-0.5) 
 -- cycle;

\draw (-1.25,-0.25) node {$\dots$};
\draw (1.35,-0.25) node {$\dots$};

\draw (1.825,0.05) node {\scriptsize $\gamma-\frac{2b-1}{2}$}
    (1.7,0) 
 -- (1.825,0)  
-- (1.95,0) 
-- (1.875,-0.5) 
-- (1.825,-0.5) 
-- (1.775,-0.5) 
 -- cycle;

\draw (-1.825,0.05) node {\scriptsize $\gamma+\frac{2a-1}{2}$}
    (-1.7,0) 
 -- (-1.825,0)  
-- (-1.95,0) 
-- (-1.875,-0.5) 
-- (-1.825,-0.5) 
-- (-1.775,-0.5) 
 -- cycle;

\draw[arrows = {-Stealth[]}]          (-0.15,0)   to [out=90,in=90]node[above]{$\nu$} (-2.5,0);
\draw[arrows = {-Stealth[]}]          (0.15,0)   to [out=90,in=90]node[above]{$-\nu$} (2.5,0);
 \end{tikzpicture} ($0 \leq \nu \leq \min\{\frac{2a+1}{2},\frac{2b+1}{2}\}$)
\end{center}

    Otherwise, it is not unitary up to level $\mathfrak{p}$.
\end{theorem}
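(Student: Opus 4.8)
The plan is to reduce Theorem~\ref{thm-un1fund} to a classification of the possible $\theta$-stable data together with a rank-one intertwining-operator computation. First I would enumerate the fundamental $\theta$-stable data for $U(n,1)$. Since $q=1$, in any $\lambda_a$-datum exactly one block $B_0$ has second size-coordinate $1$, so $B_0$ is a $(0,1)$-box, a $(1,1)$-rectangle, a $(1,1)$-parallelogram, or a $(2,1)$-trapezoid, and every other block is a $(1,0)$-box. Ordering the blocks by decreasing content and using the content-parity constraints of the block definitions ($(1,0)$-boxes, $(1,1)$-parallelograms and $(2,1)$-trapezoids all require $\gamma+\tfrac{\epsilon+1}{2}\in\mathbb{Z}$, whereas a $(1,1)$-rectangle requires $\gamma+\tfrac{\epsilon}{2}\in\mathbb{Z}$), the hypothesis that neighbouring contents differ by at most $1$, together with distinctness of contents, forces the data --- up to tensoring by a power of $\tfrac{\det}{|\det|}$ --- to be exactly the shapes listed: a chain of boxes containing one $(0,1)$-box, or a chain around a central $(1,1)$-parallelogram, $(2,1)$-trapezoid, or $(1,1)$-rectangle, in which case the flanking $(1,0)$-boxes are forced to lie at $\gamma\pm1,\gamma\pm2,\dots$ in the first two cases and at $\gamma\pm\tfrac12,\gamma\pm\tfrac32,\dots$ in the third.

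Next I would dispose of the easy cases. If $B_0$ is a $(0,1)$-box then $X$ is a discrete series, and if $B_0$ is a $(1,1)$-parallelogram with $\nu=0$ then $X$ is a limit of discrete series; both are tempered, hence unitary. If $B_0$ is a $(1,1)$-parallelogram with $\nu\neq0$, I would peel it off as its own sub-datum in the finest partition of $\mathcal{D}$: by Example~\ref{eg-parallel} the associated $U(1,1)$-module is a unitary twist of the non-pseudospherical principal series, whose two lowest $(L'\cap K)$-types carry opposite signatures; these lift to $\mathfrak{q}'$-bottom-layer $K$-types of $X$ that differ by a $\mathfrak{p}$-step, so Theorem~\ref{thm-bottom} forces $X$ to be non-unitary up to level $\mathfrak{p}$.

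The heart of the proof is the case $B_0$ a $(2,1)$-trapezoid or a $(1,1)$-rectangle. Here I would use that fundamentality has forced the flanking boxes into a step-$1$ chain, so that Method~II of Section~\ref{sec-irrep} --- after applying step~(i) to $B_0$, performing the $\rho(\mathfrak{u})$-shifts, and identifying the leftover $U(n-1,0)$-datum with the trivial representation --- realizes $X$ as the image of the normalized intertwining operator $\iota$ of \eqref{eq-iota}, for the value of $m\in\tfrac12\mathbb{Z}$ dictated by the displacement of the content of $B_0$ from the midpoint of the chain. One checks that $n-2|m|$ equals $2\min\{a,b\}+2$ in the trapezoid case and $2\min\{a,b\}+1$ in the rectangle case, and that $|2m|<n$, so the lowest $K$-type is $V_{(0,\dots,0|2m)}$. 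The explicit scalars of $\iota$ recorded in Section~\ref{subsec-un1} then show that the invariant Hermitian form of $X$ is positive semidefinite on every $K$-type precisely when $0\le 2\nu\le n-2|m|$, which translates into the two displayed ranges for $\nu$; while if $2\nu>n-2|m|$ the scalar on $V_{(1,0,\dots,0|2m-1)}$ (when $m\ge0$) or on $V_{(0,\dots,0,-1|2m+1)}$ (when $m\le0$) is strictly negative, and since this $K$-type occurs in $V_{(0,\dots,0|2m)}\otimes\mathfrak{p}$, the module $X$ is non-unitary up to level $\mathfrak{p}$.

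The main obstacle, I expect, is the bookkeeping in this last case: one must check carefully that once the central block has been replaced by a $GL(1,\mathbb{C})$-factor the leftover $U(n-1,0)$-datum is exactly the trivial representation (this is where fundamentality enters) and that the $GL(1,\mathbb{C})$-parameter is precisely the asserted $m$; and then, since the scalar of $\iota$ is a product over two blocks, one must verify that it is genuinely nonnegative on the entire closed interval $0\le 2\nu\le n-2|m|$ --- the discussion in Section~\ref{subsec-un1} only records strict positivity on the open subinterval --- and that the first sign change beyond the endpoint already occurs at a level-$\mathfrak{p}$ $K$-type. The parallelogram subcase also requires the bottom-layer argument to be valid for all $\nu$ rather than only in the good range, which is the content of Theorem~\ref{thm-bottom} combined with \cite[Proposition~4.12]{W22}.
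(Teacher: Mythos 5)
Your proof follows the same route as the paper's: after the enumeration of possible central blocks and the $\tfrac{\det}{|\det|}$-normalization, you dispose of the $(1,1)$-parallelogram case by the bottom-layer argument of Example~\ref{eg-parallel}, then realize the rectangle/trapezoid cases via Method~II as the image of the normalized rank-one intertwining operator $\iota$ of \eqref{eq-iota} with residual $U(n-1,0)$-factor trivial, and read off the unitarity interval $0\le 2\nu\le n-2|m|$ from the scalars in Section~\ref{subsec-un1}. The points you flag as requiring care --- that the bookkeeping identifies $n-2|m|$ with $2\min\{a,b\}+2$ resp.\ $2\min\{a,b\}+1$, that the form remains semidefinite at the closed endpoint because the offending scalar vanishes rather than turns negative, and that the first sign change for $2\nu>n-2|m|$ lands on $V_{(1,0,\dots,0|2m-1)}$ or $V_{(0,\dots,0,-1|2m+1)}$ depending on $\operatorname{sgn}(m)$, both of which are level-$\mathfrak{p}$ $K$-types --- are exactly the details the paper treats tersely, and you resolve them correctly.
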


\subsection{General case}
We now consider the case of a general Hermitian, irreducible $(\mathfrak{g},K)$-module $X$ with real infinitesimal character. As discussed at the end of Section \ref{sec-fund}, 
partition the combinatorial $\theta$-stable datum of $X$ into fundamental data $$\mathcal{D} = \bigsqcup_{i=1}^k \mathcal{F}_i,$$
and consider the fundamental
$(\mathfrak{l}_i',L_i'\cap K)$-modules $\pi_i$ corresponding to $\mathcal{F}_i$ (cf. Remark \ref{rmk-shifted}).

Since $G = U(n,1)$, the fundamental datum $\mathcal{F}_i$ must either correspond to (i) a unitary module of $U(p_i,0)$ if $q_i = 0$, or (ii) a fundamental representation of $U(p_i,1)$ if $q_i = 1$. In the second case, if the $\mathcal{F}_i$ is not of the form given in Theorem \ref{thm-un1fund}, then the theorem implies that $\pi_i$ is not unitary up to level $\mathfrak{l}_i' \cap \mathfrak{p}$. Consequently, the bottom layer $K$-type arguments imply that $X$
 is also not unitary up to level $\mathfrak{p}$.  
 
As a result, all $\mathcal{F}_i$ must be of the form given in Theorem \ref{thm-un1fund}. Moreover, in such a case, it is easy to see from Lemma \ref{lem-good} that $X$ is cohomologically induced from $\mathfrak{q}'$ in the good range where unitarity is preserved (Theorem \ref{good range}). In conclusion, we have:
 \begin{corollary}
\label{cor-un1}
    Let $G = U(n,1)$, and $X$ be an irreducible, Hermitian $(\mathfrak{g},K)$-module with real infinitesimal character and $\theta$-stable datum $\mathcal{D} = \bigsqcup_{i=1}^k \mathcal{F}_i$. Then $X$ is unitary if and only if each $\mathcal{F}_i$ is as given in Theorem \ref{thm-un1fund}.
    Otherwise, it is not unitary up to level $\mathfrak{p}$.
\end{corollary}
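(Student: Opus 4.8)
The plan is to reduce the unitarity question for a general irreducible Hermitian $(\mathfrak{g},K)$-module $X$ of $U(n,1)$ to the fundamental case handled in Theorem \ref{thm-un1fund}, via the bottom-layer machinery of Section \ref{subsec-bottom} together with the good-range preservation result (Theorem \ref{vanishing}). First I would write the combinatorial $\theta$-stable datum of $X$ as $\mathcal{D} = \bigsqcup_{i=1}^k \mathcal{F}_i$, the canonical partition into fundamental sub-data with gaps $>1$ between consecutive pieces, and let $\mathfrak{q}' = \mathfrak{l}_0' + \mathfrak{u}_0'$ be the associated $\theta$-stable parabolic, with $L' = \prod_i L_i'$ and $L_i' = U(p_i,q_i)$. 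Since $q = 1$, exactly one index $i_0$ has $q_{i_0} = 1$ (the block containing the unique $(r,s)$-piece with $s > 0$, where $|r-s|\le 1$ forces $s\in\{0,1\}$ on each block, and $\sum s_i = 1$), and all other $\mathcal{F}_i$ give modules $\pi_i$ of compact groups $U(p_i,0)$, which are automatically unitary finite-dimensional representations. So the only obstruction to unitarity can come from $\pi_{i_0}$, a fundamental module of $U(p_{i_0},1)$.

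The forward direction is the easy half: if some $\mathcal{F}_{i_0}$ is \emph{not} of the form listed in Theorem \ref{thm-un1fund}, that theorem says $\pi_{i_0}$ is non-unitary up to level $\mathfrak{l}_{i_0}'\cap\mathfrak{p}$, i.e. the Hermitian form on $\pi_{i_0}$ has indefinite signature on $K$-types obtained from the lowest $(L_{i_0}'\cap K)$-types by tensoring with $\mathfrak{l}_{i_0}'\cap\mathfrak{p}$. By Proposition 4.12 of \cite{W22} (invoked at the end of Section \ref{subsec-bottom}), these $K$-types — after adding $2\rho(\mathfrak{u}'\cap\mathfrak{p})$ and the contributions of the other (lowest, hence bottom-layer) $(L_i'\cap K)$-types — remain $\mathfrak{q}'$-bottom layer in $\mathfrak{g}$; one checks this against the shapes in Theorem \ref{thm-un1fund}, the point being that a non-fundamental $\mathcal{F}_{i_0}$ has a neighbouring-block gap $>1$, which is exactly the configuration that keeps the relevant level-$\mathfrak{p}$ $K$-types in the bottom layer. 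Then Theorem \ref{thm-bottom} transports the indefinite signature to $X$, giving that $X$ is non-unitary up to level $\mathfrak{p}$, in particular non-unitary.

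For the converse, suppose every $\mathcal{F}_i$ has the form in Theorem \ref{thm-un1fund}; I want to conclude $X$ is unitary. Because the gaps between consecutive fundamental sub-data exceed $1$, Lemma \ref{lem-good}(a) applies to the partition $\mathcal{D} = \bigsqcup_i \mathcal{F}_i$: the segments $[e_i,b_i]$ satisfy $e_1\ge b_1 > e_2\ge b_2 > \cdots > e_k\ge b_k$ (one must check the gap condition translates to the strict inequalities $b_i > e_{i+1}$ on segment endpoints — the $\rho(\mathfrak{u})$-shift is a constant on each block and a careful bookkeeping of the largest/smallest infinitesimal-character coordinates gives this), so $X = \mathcal{L}_{\mathfrak{q}'}(\pi_{L'})$ with $\pi_{L'} = \boxtimes_i\pi_i$ in the \emph{good} range. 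By Theorem \ref{vanishing}(b), $X$ is unitary provided $\pi_{L'}$ is unitary, i.e. provided each $\pi_i$ is unitary; for $i\ne i_0$ this is clear, and for $i = i_0$ it is precisely the content of Theorem \ref{thm-un1fund} that the listed shapes are unitary. This finishes the equivalence.

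The main obstacle I anticipate is not the conceptual structure but the bottom-layer verification in the forward direction: one must confirm that for \emph{every} non-admissible shape of $\mathcal{F}_{i_0}$, the specific non-unitarity certificate produced in Theorem \ref{thm-un1fund} (the lowest $K$-type $V_{(0,\dots,0|2m)}$ together with the level-$\mathfrak{p}^-$ type $V_{(0,\dots,0,-1|2m+1)}$, or the parallelogram certificate of Example \ref{eg-parallel}) lies in the $\mathfrak{q}'$-bottom layer after assembling it with the other blocks — this is exactly where the $>1$ gap hypothesis is used, and it requires matching the weight $\beta = (\mu_1;\dots;\mu_k) + 2\rho(\mathfrak{u}'\cap\mathfrak{p})$ of \eqref{eq-bottombeta} against the $K$-type multiplicity/signature data of $\pi_{L'}$. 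A secondary bookkeeping point is the translation of "gap $>1$ between sub-data" into the segment inequalities of Lemma \ref{lem-good}(a), which needs the explicit $\rho(\mathfrak{u})$-shifts; both are routine given the framework of Section \ref{sec-prelim} but are where the real work lies.
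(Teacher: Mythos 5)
Your proposal is correct and follows essentially the same route as the paper: partition $\mathcal{D}$ into fundamental sub-data, observe that for $U(n,1)$ exactly one piece $\mathcal{F}_{i_0}$ has $q_{i_0}=1$ while the others are compact and automatically unitary, use Theorem~\ref{thm-un1fund} plus the bottom-layer transfer (Theorem~\ref{thm-bottom} and \cite[Prop.~4.12]{W22}) for the non-unitary direction, and use Lemma~\ref{lem-good} together with Theorem~\ref{vanishing}(b) for the good-range reduction in the unitary direction. The bookkeeping worries you flag (the $\nu$-bounds in Theorem~\ref{thm-un1fund} together with the $>1$ gap guaranteeing the segment inequalities, and the bottom-layer verification) are exactly the points the paper compresses into ``it is easy to see,'' and your analysis of them is accurate.
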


\begin{example}\label{eg-u41}
We present an example for  $G = U(4,1)$. 
Up to a shift of a power of $\frac{\det}{|\det|}$, we
list all fundamental unitary representations of $G$.

\begin{itemize}
    \item[(i)] If there are only $(1,0)$ or $(0,1)$-blocks, e.g.
    \begin{center}
\begin{tikzpicture}
\draw
    (0,0) 
 -- (0.125,0) node {$2$}
-- (0.25,0) 
-- (0.175,-0.5) 
-- (0.125,-0.5) node {\mbox{}}
-- (0.075,-0.5) 
 -- cycle;
\draw
    (0.45,0) 
 -- (0.55,0) 
-- (0.625,-0.5) 
-- (0.5,-0.5) node {$1$}
-- (0.4,-0.5) 
 -- cycle;

 \draw
    (0.8,0) 
 -- (0.925,0) node {$0$}
-- (1.05,0) 
-- (0.975,-0.5) 
-- (0.925,-0.5) node {\mbox{}}
-- (0.875,-0.5) 
 -- cycle;

  \draw
    (1.2,0) 
 -- (1.325,0) node {$-1$}
-- (1.45,0) 
-- (1.375,-0.5) 
-- (1.325,-0.5) node {\mbox{}}
-- (1.275,-0.5) 
 -- cycle;

   \draw
    (1.6,0) 
 -- (1.725,0) node {$-2$}
-- (1.85,0) 
-- (1.775,-0.5) 
-- (1.725,-0.5) node {\mbox{}}
-- (1.675,-0.5) 
 -- cycle;
\end{tikzpicture} \end{center}
    Then it must be unitary, and  corresponds to a discrete series representation. 
    For instance, the above example corresponds to a discrete series representation with infinitesimal character $\rho = (2,1,0,-1,-2)$. More explicitly, it is equal to $A_{\mathfrak{b}}(0)$, where the $\theta$-stable Borel subalgebra $\mathfrak{b} = \mathfrak{t} + \mathfrak{n}$ is determined by the element $(2,0,-1,-2\ |\ 1) \in \mathfrak{t}^*$.
    
    \item[(ii)] If the datum contains a $(1,1)$ parallelogram block, then $\nu$ must be $= 0$ in order for the module to be unitary, e.g.
     \begin{center}
\begin{tikzpicture}
\draw
    (0.3,0) 
 -- (0.425,0) node {$1$}
-- (0.55,0) 
-- (0.475,-0.5) 
-- (0.425,-0.5) node {\mbox{}}
-- (0.375,-0.5) 
 -- cycle;

 \draw
    (0.8,0) 
 -- (0.925,0) node {$0$}
-- (1.05,0) 
-- (0.975,-0.5) 
-- (0.85,-0.5) node {$0$}
-- (0.725,-0.5) 
 -- cycle;

  \draw
    (1.2,0) 
 -- (1.325,0) node {$-1$}
-- (1.45,0) 
-- (1.375,-0.5) 
-- (1.325,-0.5) node {\mbox{}}
-- (1.275,-0.5) 
 -- cycle;

   \draw
    (1.6,0) 
 -- (1.725,0) node {$-2$}
-- (1.85,0) 
-- (1.775,-0.5) 
-- (1.725,-0.5) node {\mbox{}}
-- (1.675,-0.5) 
 -- cycle;
\end{tikzpicture} \end{center}   
    Indeed, all such modules are {\it limit of discrete series representations}, whose infinitesimal character can be directly read off from its datum, and the unique lowest $K$-type can be obtained by the algorithm in Example \ref{eg-lambdaa}. In the above example, the corresponding limit of discrete series module $A_{\mathfrak{b}}(\lambda)$ is also cohomologically induced from the same theta-stable Borel $\mathfrak{b}$
    as in the first case, but in the {\it weakly good} range instead.
\item[(iii)] Finally, consider the case when the datum contains a $(1,1)$ rectangle or $(2,1)$ trapezoid block. By symmetry, we assume the $(1,1)$ or $(2,1)$ block appears on the right of the datum - indeed, by `flipping' the data horizontally, one gets the contragredient representation (up to a $\det$-shift):
\begin{enumerate}
    \item \begin{tikzpicture}
\draw
    (-0.3,0) 
 -- (0.0,0) node {\small $0$\ $0$}
 -- (0.3, 0) 
-- (0.15,-0.5) 
-- (0.0,-0.5) node {\small $0$}
-- (-0.15,-0.5) 
 -- cycle;

\draw
    (-0.7,0) 
 -- (-0.825,0) node {\small $1$}
-- (-0.95,0) 
-- (-0.875,-0.5) 
-- (-0.825,-0.5) 
-- (-0.775,-0.5) 
 -- cycle;

\draw
    (-1.3,0) 
 -- (-1.425,0) node {\small $2$}
-- (-1.55,0) 
-- (-1.475,-0.5) 
-- (-1.425,-0.5) 
-- (-1.375,-0.5) 
 -- cycle;
\end{tikzpicture}, $0 \leq \nu \leq 1$;

\item \begin{tikzpicture}
\draw
    (-0.15,0) 
 -- (0.0,0) node {\scriptsize $\frac{1}{2}$}
 -- (0.15, 0) 
-- (0.15,-0.5) 
-- (0.0,-0.5) node {\scriptsize $\frac{1}{2}$}
-- (-0.15,-0.5) 
 -- cycle;

\draw
    (0.7,0) 
 -- (0.825,0) node {\scriptsize $0$}
-- (0.95,0) 
-- (0.875,-0.5) 
-- (0.825,-0.5) 
-- (0.775,-0.5) 
 -- cycle;

\draw
    (-0.7,0) 
 -- (-0.825,0) node {\scriptsize $1$}
-- (-0.95,0) 
-- (-0.875,-0.5) 
-- (-0.825,-0.5) 
-- (-0.775,-0.5) 
 -- cycle;

\draw
    (-1.4,0) 
 -- (-1.525,0) node {\scriptsize $2$}
-- (-1.65,0) 
-- (-1.575,-0.5) 
-- (-1.525,-0.5) 
-- (-1.475,-0.5) 
 -- cycle;

 \end{tikzpicture}, $0 \leq \nu \leq \frac{3}{2}$;

\item \begin{tikzpicture}
\draw
    (-0.3,0) 
 -- (0.0,0) node {\small $0$\ $0$}
 -- (0.3, 0) 
-- (0.15,-0.5) 
-- (0.0,-0.5) node {\small $0$}
-- (-0.15,-0.5) 
 -- cycle;

\draw
    (0.7,0) 
 -- (0.825,0) node {\small $-1$}
-- (0.95,0) 
-- (0.875,-0.5) 
-- (0.825,-0.5) 
-- (0.775,-0.5) 
 -- cycle;

\draw
    (-0.7,0) 
 -- (-0.825,0) node {\small $1$}
-- (-0.95,0) 
-- (-0.875,-0.5) 
-- (-0.825,-0.5) 
-- (-0.775,-0.5) 
 -- cycle;
\end{tikzpicture}, $0 \leq \nu \leq 2$.
\end{enumerate}
 For instance, if we take $\nu = 3/2$ in case (2), then by Lemma \ref{lem-good} it is cohomologically induced in the weakly good range from theta-stable parabolic $\mathfrak{q}_0' = \mathfrak{l}_0' + \mathfrak{u}_0'$ with $\mathfrak{l}_0 = \mathfrak{u}(1,0) \oplus \mathfrak{u}(3,1)$. 
 
 We focus on the  $\mathfrak{u}(3,1)$ subdatum. By computing its infinitesimal character and lowest $K$-type as in Example \ref{eg-lambdaa2}, the sub-datum \begin{tikzpicture}
\draw
    (-0.15,0) 
 -- (0.0,0) node {\scriptsize $\frac{1}{2}$}
 -- (0.15, 0) 
-- (0.15,-0.5) 
-- (0.0,-0.5) node {\scriptsize $\frac{1}{2}$}
-- (-0.15,-0.5) 
 -- cycle;

\draw
    (0.5,0) 
 -- (0.625,0) node {\scriptsize $0$}
-- (0.75,0) 
-- (0.675,-0.5) 
-- (0.625,-0.5) 
-- (0.575,-0.5) 
 -- cycle;

\draw
    (-0.5,0) 
 -- (-0.625,0) node {\scriptsize $1$}
-- (-0.75,0) 
-- (-0.675,-0.5) 
-- (-0.625,-0.5) 
-- (-0.575,-0.5) 
 -- cycle;
 \end{tikzpicture} 
 with $\nu = \frac{3}{2}$ corresponds to the unitary character $(\frac{\det}{|\det|})^{\frac{1}{2}}$ of (the double cover of) $L_2' = U(3,1)$. Therefore, the module corresponding to the full datum in $\mathfrak{u}(4,1)$ is an $A_{\mathfrak{q}'}(\lambda)$-module in the weakly good range. 

 \smallskip
 Again, by looking at the infinitesimal character and lowest $K$-type, one can conclude that the trivial module occurs in case (3) with $\nu = 2$.
\end{itemize}


\end{example}

\begin{remark} \label{rmk-un1}
    We end this section by noting that the arguments in this subsection can be generalized into the following:
    
    Let $G = U(p,q)$, and $X$ be an irreducible $(\mathfrak{g},K)$-module such that its $\theta$-stable datum is $\mathcal{D} = \bigsqcup_{i=1}^k \mathcal{F}_i$. Suppose each $\mathcal{F}_i$ corresponds to a fundamental module of real rank $\leq 1$, then $X$ is unitary if and only if $\mathcal{F}_i$ is of the form given in Theorem \ref{thm-un1fund}. 
    
    Note that we are also allowing $U(1,q)$ and $U(0,q)$-modules, where the fundamental data appearing in Theorem \ref{thm-un1fund} are flipped upside down in these cases. 
\end{remark}

\section{Unitary dual of $U(n,2)$} \label{sec-un2}

\subsection{Fundamental cases}
We first classify the fundamental, irreducible unitary representations:
\begin{enumerate}
\item[(a)] Suppose the fundamental datum consists only of $(1,0)$, $(0,1)$ blocks
\begin{tikzpicture}
    \draw (2.4,0)--
    (2.5,0)--
    (2.7,0.7)--(2.2,0.7)--cycle;
\end{tikzpicture}, 
\begin{tikzpicture}
    \draw (2.4,0)--
    (2.5,0)--
    (2.7,-0.7)--(2.2,-0.7)--cycle;
    \end{tikzpicture},
or $(1,1)$-parallelogram blocks
\begin{tikzpicture}
    \draw (-0.8,0)--(-0.4,0)--(-0.3,-0.7)--(-0.7,-0.7)--cycle ;
\end{tikzpicture}, \begin{tikzpicture}
    \draw (-0.8,-0.7)--(-0.4,-0.7)--(-0.3,-0)--(-0.7,-0)--cycle ;
\end{tikzpicture}, then Example \ref{eg-parallel} implies that the $\nu$-value of the $(1,1)$-parallelogram block must be zero. In all such cases, the data correspond to limit of discrete series representations. 

\item[(b)] there are $k+l+m$ $(1,0)$-blocks and two $(r,1)$-blocks ($r = 0,1,2$), i.e. 
\begin{center}\begin{tikzpicture}
\foreach \x in {0,1,2.5,3.5,5,6}
	\draw (\x+0,0)--
 (\x+0.5,0)--(\x+0.3,-0.7)--
 (\x+0.2,-0.7)--
 cycle; 
\node at (0.75,-0.35) {$\cdots$};\node at (3.25,-0.35) {$\cdots$}; \node at (5.75,-0.35) {$\cdots$}; 
	\node at (2.0,-0.35) {$\ast$}; \node at (4.5,-0.35) {$\star $}; 
 \node at (0.7,-1.0) {$\underbrace{}_{k}$};
 \node at (3.2,-1.0) {$\underbrace{}_{l}$}; 
 \node at (5.7,-1.0) {$\underbrace{}_{m}$};
\end{tikzpicture}\end{center} 
where the $(r,1)$-blocks $\ast, \star$ are given by: \begin{itemize}
\item[(i)] both blocks are of the shapes \begin{tikzpicture} 
\draw (-1.6,0)--(-1.2,0)--(-1.2,-0.7)--(-1.6,-0.7)--cycle ;\end{tikzpicture} or \begin{tikzpicture}
    \draw (1.0,0)--(1.8,0)--(1.6,-0.7)--(1.2,-0.7)--cycle;
\end{tikzpicture};
\item[(ii)] one block is of the shape given in (i), and the other is of the shape \begin{tikzpicture}
    \draw (-0.8,0)--(-0.4,0)--(-0.3,-0.7)--(-0.7,-0.7)--cycle ;
\end{tikzpicture} , 
\begin{tikzpicture}
    \draw (0.2,0)--(0.6,0)--(0.5,-0.7)--(0.1,-0.7)--cycle ;
\end{tikzpicture};

\item[(iii)] one block is of the shape given in (i), and the other is of the shape 
\begin{tikzpicture}
    \draw (2.4,0)--
    (2.5,0)--
    (2.7,-0.7)--(2.2,-0.7)--cycle;
    \end{tikzpicture}
\end{itemize}

\item[(c)] there are $k+m$ $(1,0)$-blocks and one $(r,2)$-block ($r = 1,2,3$):
\begin{center}\begin{tikzpicture}
\foreach \x in {0,1,2.5,3.5}
	\draw (\x+0,0)--(\x+0.5,0)--(\x+0.3,-0.7)--
 (\x+0.2,-0.7)--
 cycle; 
\node at (0.75,-0.35) {$\cdots$};\node at (3.25,-0.35) {$\cdots$};
	\node at (2.0,-0.35) {$\ast$}; 
 \node at (0.7,-1.0) {$\underbrace{}_{k}$};
 \node at (3.2,-1.0) {$\underbrace{}_{m}$}; 
\end{tikzpicture}\end{center} 
where the $(r,2)$-block $\ast$ takes one of the following shapes:
\begin{itemize}
\item[(i)] \begin{tikzpicture} 
\draw (-1.3,0)--(-0.5,0)--(-0.5,-0.7)--(-1.3,-0.7)--cycle;
\end{tikzpicture}
or 
\begin{tikzpicture}
\draw (2.9,0)--(4.1,0)--(3.9,-0.7)--(3.1,-0.7)--cycle;     
\end{tikzpicture};

\item[(ii)] 
\begin{tikzpicture}
\draw (1.6,0)--(2.4,0)--(2.2,-0.7)--(1.4,-0.7)--cycle; 
\end{tikzpicture} or 
\begin{tikzpicture}
\draw (0.1,0)--(0.9,0)--(1.1,-0.7)--(0.3,-0.7)--cycle ;
\end{tikzpicture};
\item[(iii)] \begin{tikzpicture}
\draw (4.8,0)--(5.2,0)--(5.4,-0.7)--(4.6,-0.7)--cycle;
\end{tikzpicture}.
\end{itemize} 
\end{enumerate}

In the next few subsections, we will study Cases (b-c)(i), Cases (b-c)(ii) and Cases (b-c)(iii) individually.

\subsection{Unitary dual for Case (i)} \label{subsec-i}
For Cases (b)(i) and (c)(i), they are all {\it basic} in the sense of \cite{KS83}. By tensoring the module by a power of $\frac{\det}{|\det|}$, the $\lambda_a$-blocks in all these cases have content of the form:
\[\lambda_a^{\Theta,\Phi} := (\gamma, \gamma-1, \dots, \gamma - (k-1), \Theta, \gamma - k, \dots, -\gamma + m, \Phi, -\gamma + (m-1),  \dots, -\gamma+1, -\gamma\ |\  \Theta, \Phi )\]
where $\gamma = \frac{n-3}{2}$, and
    \[\gamma+1 > \Theta \geq \Phi > -\gamma-1.\]
For instance, if we take $\Theta = \Phi = \gamma - k$, we have
\[\lambda_a^{\Theta,\Theta} = (\gamma,  \dots, \gamma-(k-1), \Theta,  \Theta, \Theta, \gamma-(k+1),  \dots, -\gamma+1, -\gamma\ |\  \Theta, \Theta ),\]
i.e. we are in Case (c)(i) with a $(3,2)$-trapezoidal block (of content $\Theta$).

By the discussions in Section \ref{sec-irrep} again, the $\theta$-stable datum in Cases (b)(i) and (c)(i) with $\lambda_a$-content $\lambda_a^{\Theta,\Phi}$ corresponds to 
the lowest $K$-type subquotient $J(\Theta,\Phi;\nu_{\Theta},\nu_{\Phi})$ of the parabolically induced module:
\begin{equation}\label{ind_spherical}
\mathrm{Ind}_{M_2A_2N_2}^G\left(  J_1(2\Theta;2\nu_{\Theta}) \boxtimes J_1(2\Phi;2\nu_{\Phi})\boxtimes \mathrm{triv} \boxtimes 1\right)\quad (M_2A_2 = GL(1,\mathbb{C}) \times GL(1,\mathbb{C}) \times U(n-2,0)).
\end{equation}
It suffices to study the cases when $\nu_{\Theta}, \nu_{\Phi} \geq 0$. By choosing $N$ appropriately, one can make the module \eqref{ind_spherical} standard, so that by writing $x := \max\{\nu_{\Theta}, \nu_{\Phi}\}$ and $y := \min\{\nu_{\Theta}, \nu_{\Phi}\}$, 
$J(\Theta,\Phi;\nu_{\Theta}, \nu_{\Phi})$ is the image of the long intertwining operator:
$$(x,y) \xrightarrow{s_{\alpha,1}} (y, x) \xrightarrow{s_{\beta,1}} (y, -x) \xrightarrow{s_{\alpha,2}} (-x,y) \xrightarrow{s_{\beta,2}} (-x,-y),$$
where the $s_{\beta}$ intertwining operators are as given in Section \ref{subsec-un1}, and $s_{\alpha}$ operators are the 
$GL(1,\mathbb{C}) \times GL(1,\mathbb{C})$-intertwining operators.

Define the {\bf fundamental rectangle} by
$$\left\{(\nu_{\Theta}, \nu_{\Phi})\ |\ 0\leq \nu_{\Theta}\leq \frac{n-1}{2}-|\Theta|, 0\leq \nu_{\Phi}\leq \frac{n-1}{2}-|\Phi| \right\}.$$ 
By the results in Section \ref{subsec-un1}, the intertwining operators $s_{\beta,1}$, $s_{\beta,2}$ are injective in the interior of the rectangle. In the following theorem, we determine the unitarity $J(\Theta,\Phi; \nu_{\Theta}, \nu_{\Phi})$ for $(\nu_{\Theta}, \nu_{\Phi})$ inside the fundamental rectangle and outside the fundamental respectively:
\begin{theorem}[\cite{KS83}, Section 2] \label{thm-ci}
Let $J(\Theta,\Phi; \nu_{\Theta}, \nu_{\Phi})$ be a fundamental $(\mathfrak{g},K)$-module as defined above. In the first quadrant $\{(\nu_{\Theta}, \nu_{\Phi})| \nu_{\Theta}, \nu_{\Phi} \geq 0\}$, the points where $J(\Theta,\Phi; \nu_{\Theta}, \nu_{\Phi})$ is unitary are given as follows:
\begin{enumerate}
    \item[(a)] Inside the fundamental rectangle, the points inside the `triangles' 
    \begin{itemize} 
    \item $\{\nu_{\Theta} + \nu_{\Phi}\leq \Theta-\Phi+1\}$;  
    \item $\{\nu_{\Theta} + \nu_{\Phi} \leq \Theta-\Phi+k+1, \nu_{\Theta} - \nu_{\Phi}\geq \Theta-\Phi+k\}, k\in \mathbb{N}_+ $
    \item $\{\nu_{\Theta} + \nu_{\Phi} \leq \Theta-\Phi+k+1, \nu_{\Phi} - \nu_{\Theta}\geq \Theta-\Phi+k\}, k\in \mathbb{N}_+ $
    \end{itemize}
    and the lines 
    \begin{itemize}
        \item $\{\nu_{\Theta} - \nu_{\Phi}=\Theta-\Phi+k\}, k\in \mathbb{N}_+$. 
          \item $\{\nu_{\Phi} - \nu_{\Theta}=\Theta-\Phi+k\}, k\in \mathbb{N}_+$.
    \end{itemize}
    \item[(b)] Outside the fundamental rectangle, there are no unitary representations except when $\Theta \cdot \Phi  > 0$  or $\Theta = \Phi = 0$,  with
    \[ (\nu_{\Theta}, \nu_{\Phi}) = \begin{cases} (\frac{n-1}{2}-|\Theta|, \frac{n+1}{2}-|\Phi|);  (\frac{n+1}{2}-|\Theta|, \frac{n-1}{2}-|\Phi|) & \mathrm{if}\ \Theta = \Phi,
    \\ (\frac{n-1}{2}-|\Theta|, \frac{n+1}{2}-|\Phi|) & \mathrm{if}\  \Theta > \Phi>0
    \\ (\frac{n+1}{2}-|\Theta|, \frac{n-1}{2}-|\Phi|) & \mathrm{if}\ 0 > \Theta >\Phi \\
   \end{cases}.\]
\end{enumerate}
Otherwise, it is not unitary 
\begin{itemize}
\item up to level $\mathrm{Sym}^t(\mathfrak{k}^-) = V_{(0,\dots,0|t,-t)}$ for some $1\leq t\leq \max\{\frac{n-1}{2}-|\Theta|-\Theta+\Phi, \frac{n-1}{2}-|\Phi|-\Phi+\Theta\}$ in (a); or
\item up to level $\mathfrak{k}^{-} = V_{(0,\dots,0|1,-1)}$,  $\mathfrak{p}^{\pm}$  or $\wedge^2\mathfrak{p}^{\pm}$ in (b).
\end{itemize}
\end{theorem}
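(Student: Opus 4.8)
The plan is to reduce the unitarity question for $J(\Theta,\Phi;\nu_\Theta,\nu_\Phi)$ to a computation of the signature of the long intertwining operator, decomposed along the factorization
$(x,y)\xrightarrow{s_{\alpha,1}}(y,x)\xrightarrow{s_{\beta,1}}(y,-x)\xrightarrow{s_{\alpha,2}}(-x,y)\xrightarrow{s_{\beta,2}}(-x,-y)$
already recorded before the statement. Since each $K$-type of the standard module \eqref{ind_spherical} appears with multiplicity one (the inducing data being one-dimensional on $GL(1,\mathbb C)\times GL(1,\mathbb C)$ and trivial on $U(n-2,0)$), the Hermitian form is diagonal on $\widehat K$, and unitarity amounts to the assertion that the scalar by which the normalized operator acts on each $K$-type is positive. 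The $s_\alpha$ factors are the $GL(2,\mathbb C)$-type operators whose scalars are ratios of $\Gamma$-type products in the $\nu$'s, and the $s_\beta$ factors are exactly the $U(m,1)$ operators $\iota$ of Section \ref{subsec-un1}, whose scalars on $V_{(k,0,\dots,0,-l\,|\,\ast)}$ were written down explicitly there. So the first step is to assemble these four scalar formulas into a single product formula for the action on an arbitrary $K$-type of the standard module.

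Next I would organize the $K$-types by the relevant pair of nonnegative parameters $(k,l)$ governing the $s_\beta$ operators (together with the $GL(1,\mathbb C)^2$-parameter governing $s_\alpha$), and analyze the sign of the product factor-by-factor. The key input from Section \ref{subsec-un1} is that when $0\le 2\nu<\min\{n-2\Theta,n+2\Theta\}$ — i.e.\ strictly inside the fundamental rectangle in the relevant coordinate — the $\beta$-operator is positive definite on all $K$-types; so inside the fundamental rectangle the only sign changes come from the $s_\alpha$ factors, which is a rank-one (in fact $SL(2)$-type) problem. This is precisely the mechanism that produces the "triangles" $\{\nu_\Theta+\nu_\Phi\le\Theta-\Phi+1\}$ etc.\ and the "lines" $\{\nu_\Theta-\nu_\Phi=\Theta-\Phi+k\}$: the lines are where a numerator of the $s_\alpha$-scalar vanishes and the corresponding operator degenerates, landing on a smaller (unitary) module, while the triangles are the regions where all the $s_\alpha$-scalars on the relevant $K$-types stay positive. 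For part (a) I would therefore: (1) show the listed triangles and lines give positivity on all $K$-types (a wall-crossing / complementary-series type argument, using that the trivial $K$-type is normalized to $+1$ and tracking which $K$-type's scalar changes sign first as one crosses each wall); and (2) show that any $(\nu_\Theta,\nu_\Phi)$ in the rectangle outside this locus fails already at some $V_{(0,\dots,0|t,-t)}=\operatorname{Sym}^t(\mathfrak k^-)$, by exhibiting the explicit $t$ for which the product scalar is negative — this is where the bound $1\le t\le\max\{\tfrac{n-1}{2}-|\Theta|-\Theta+\Phi,\ \tfrac{n-1}{2}-|\Phi|-\Phi+\Theta\}$ comes from, as the first such $K$-type one meets.

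For part (b), outside the fundamental rectangle, the strategy is different: here one of the $s_\beta$ operators is no longer injective on all $K$-types, so the module is a genuine subquotient and one can use Theorem \ref{vanishing} together with Lemma \ref{lem-good}. When $\Theta\cdot\Phi>0$ (or $\Theta=\Phi=0$) and $(\nu_\Theta,\nu_\Phi)$ takes the exceptional value $(\tfrac{n-1}{2}-|\Theta|,\tfrac{n+1}{2}-|\Phi|)$ or its mirror, the datum can be repartitioned so that $X$ is cohomologically induced in the good range from a $\theta$-stable parabolic whose Levi factor carries a unitary (one-dimensional or $U(n-1,1)$-fundamental) module — a verification via the infinitesimal-character/lowest-$K$-type bookkeeping of Examples \ref{eg-lambdaa}--\ref{eg-lambdaa2}, exactly as in Example \ref{eg-u41}. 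Conversely, for all other points outside the rectangle one must produce a non-unitarity certificate among $\mathfrak k^-=V_{(0,\dots,0|1,-1)}$, $\mathfrak p^\pm$, or $\wedge^2\mathfrak p^\pm$: I would do this by evaluating the assembled product scalar on these few explicit $K$-types and showing the sign is wrong, using that once $2\nu$ exceeds the $s_\beta$-threshold the first $\beta$-numerator has already passed through zero, flipping exactly one sign. The main obstacle I anticipate is the bookkeeping in part (a)(2): identifying, uniformly in $(\Theta,\Phi,n,k,l)$, the precise $K$-type $V_{(0,\dots,0|t,-t)}$ at which the product scalar first turns negative as $(\nu_\Theta,\nu_\Phi)$ leaves the union of triangles and lines, and matching the resulting range of $t$ with the stated bound — this requires a careful sign analysis of the combined $s_\alpha$--$s_\beta$ scalar rather than any single deep idea, but it is the delicate combinatorial heart of the argument. (Since the present statement is quoted from \cite[Section 2]{KS83}, much of this is in principle available there; the work here is to translate it into the $\lambda_a$-block / combinatorial $\theta$-stable-datum language and to pin down the explicit non-unitarity levels.)
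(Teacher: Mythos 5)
Your high-level plan — multiplicity-one diagonalization of the Hermitian form, factoring the long operator as $s_{\alpha,1},s_{\beta,1},s_{\alpha,2},s_{\beta,2}$, using positivity of the $s_\beta$'s inside the fundamental rectangle to reduce to a $GL(2,\mathbb{C})$-type sign analysis, and exhibiting a negative $\mathrm{Sym}^t(\mathfrak{k}^-)$ $K$-type for the non-unitary region in (a) — matches the paper's proof in outline, but there are two places where your sketch would not actually close.

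First, the ``lines'' in part (a). You describe them as points where an $s_\alpha$-numerator vanishes so that the operator ``degenerates, landing on a smaller (unitary) module,'' and you fold them into a general wall-crossing/continuity argument. That is not enough: on a line $\nu_\Theta-\nu_\Phi=\Theta-\Phi+k$ the operator $s_{\alpha,1}$ is genuinely degenerate, so signatures on $\operatorname{im}(s_{\alpha,1})$ are not controlled by continuity from the adjacent triangle. The paper's argument is to prove that $J(\Theta,\Phi;\nu_\Theta,\nu_\Phi)$ remains isomorphic to the degenerate-series induced module $\mathrm{Ind}_{M''A''N''}^G\bigl(J_2(2\Theta,2\Phi;2\nu_\Theta,2\nu_\Phi)\boxtimes\mathrm{triv}\boxtimes 1\bigr)$ for \emph{every} $\nu_\Phi\ge 0$ on the line, which is established by the non-obvious kernel containment $\ker(s_{\alpha,2})\subseteq\ker(s_{\alpha,1})$ (identifying $\ker(s_{\alpha,2})$ as an irreducible composition factor of $\ker(s_{\alpha,1})$ occurring with multiplicity one in the standard module). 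Your proposal omits this step, and without it the unitarity along the lines does not follow.

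Second, the isolated points in part (b). You claim the exceptional points $(\tfrac{n-1}{2}-|\Theta|,\tfrac{n+1}{2}-|\Phi|)$ (and their mirrors) can be repartitioned so that $X$ is cohomologically induced \emph{in the good range} from a unitary Levi datum, citing Theorem~\ref{vanishing} and Lemma~\ref{lem-good}. This is incorrect: if these points were in good range they would already belong to the continuously unitary region of (a), not be isolated. Lemma~\ref{lem-good}(a) in fact shows the relevant segments overlap, so no good-range partition exists. The paper instead defers these points to Section~\ref{sec-coh}, where they are identified as $A_{\mathfrak{q}}(\lambda)$-modules in the \emph{weakly fair / mediocre} range (Propositions in cases (3)--(4) there), and unitarity is supplied by the stronger weakly-fair preservation theorem — a qualitatively different mechanism than good-range reduction. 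Relatedly, the paper also invokes Theorem~\ref{thm-fund} to dispose of the remaining ``far'' region in Sub-cases I and II of (b), which your proposal does not mention; the $\mathfrak{k}^-$, $\mathfrak{p}^\pm$, $\wedge^2\mathfrak{p}^\pm$ certificates alone do not cover all of the exterior.
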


\begin{example} \label{eg-ci}
Let $G = U(9,2)$. We present two examples of unitary $J(\Theta,\Phi;\nu_{\Theta}, \nu_{\Phi})$:

\begin{itemize}
    \item[(1)] When $\Theta = \Phi = 0$, i.e. the $\lambda_a$ blocks are of the form \begin{tikzpicture}
\draw 
    (-0.3,0) 
 -- (0.0,0) node {\small $0$\ $0$\ $0$}
 -- (0.3, 0) 
-- (0.15,-0.5) 
-- (0.0,-0.5) node {\small $0$\ $0$}
-- (-0.15,-0.5) 
 -- cycle;

\draw (0.825,0) node {\small $-1$}
    (0.7,0) 
 -- (0.825,0)  
-- (0.95,0) 
-- (0.875,-0.5) 
-- (0.825,-0.5) 
-- (0.775,-0.5) 
 -- cycle;

\draw
    (-0.7,0) 
 -- (-0.825,0) node {\small $1$}
-- (-0.95,0) 
-- (-0.875,-0.5) 
-- (-0.825,-0.5) 
-- (-0.775,-0.5) 
 -- cycle;

\draw
    (-1.125,0) 
 -- (-1.25,0) node {\small $2$}
-- (-1.375,0) 
-- (-1.3,-0.5) 
-- (-1.25,-0.5) 
-- (-1.2,-0.5) 
 -- cycle;

\draw
    (1.225,0) 
 -- (1.35,0) node {\small $-2$}
-- (1.475,0) 
-- (1.4,-0.5) 
-- (1.35,-0.5) 
-- (1.3,-0.5) 
 -- cycle;

\draw
    (1.7,0) 
 -- (1.825,0) node {\small $-3$}
-- (1.95,0) 
-- (1.875,-0.5) 
-- (1.825,-0.5) 
-- (1.775,-0.5) 
 -- cycle;

\draw
    (-1.7,0) 
 -- (-1.825,0) node {\small $3$}
-- (-1.95,0) 
-- (-1.875,-0.5) 
-- (-1.825,-0.5) 
-- (-1.775,-0.5) 
 -- cycle;
 \end{tikzpicture}. Then $\lambda_a^{0,0}$ corresponds to the trivial $K = U(9) \times U(2)$-type by Theorem \ref{thm-datum}, and the spherical unitary dual $J(0,0;\nu_{\Theta}, \nu_{\Phi})$ is given by:
\begin{center}
\begin{tikzpicture}[line cap=round,line join=round,x=1cm,y=1cm]
\begin{axis}[
x=1cm,y=1cm,
axis lines=middle,
ymajorgrids=true,
xmajorgrids=true,
xmin=-1,
xmax=5.5,
ymin=-0.5,
ymax=5.5,
xtick={-1,0,...,5},
ytick={0,1,...,5},]
\clip(-0.5,-0.5) rectangle (5.5,5.5);
\fill[line width=2pt] (3,0) -- (3.5,0.5) -- (4,0) -- cycle;
\fill[line width=2pt] (1,0) -- (1.5,0.5) -- (2,0) -- cycle;
\fill[line width=2pt] (2,0) -- (2.5,0.5) -- (3,0) -- cycle;
\fill[line width=2pt] (1,0) -- (0,1) -- (0,0) -- cycle;
\fill[line width=2pt] (0,1) -- (0.5,1.5) -- (0,2) -- cycle;
\fill[line width=2pt] (0,2) -- (0.5,2.5) -- (0,3) -- cycle;
\fill[line width=2pt] (0,3) -- (0.5,3.5) -- (0,4) -- cycle;
\draw [line width=2pt] (1,0)-- (1.5,0.5);
\draw [line width=2pt] (1.5,0.5)-- (2,0);
\draw [line width=2pt] (2,0)-- (1,0);
\draw [line width=2pt] (2,0)-- (2.5,0.5);
\draw [line width=2pt] (2.5,0.5)-- (3,0);
\draw [line width=2pt] (3,0)-- (2,0);
\draw [line width=2pt] (1,0)-- (0,1);
\draw [line width=2pt] (0,1)-- (0,0);
\draw [line width=2pt] (0,0)-- (1,0);
\draw [line width=2pt] (0,1)-- (0.5,1.5);
\draw [line width=2pt] (0.5,1.5)-- (0,2);
\draw [line width=2pt] (0,2)-- (0,1);
\draw [line width=2pt] (0,2)-- (0.5,2.5);
\draw [line width=2pt] (0.5,2.5)-- (0,3);
\draw [line width=2pt] (0,3)-- (0,2);
\draw [line width=2pt] (0,3)-- (0.5,3.5);
\draw [line width=2pt] (0.5,3.5)-- (0,4);
\draw [line width=2pt] (0,4)-- (0,3);
\draw [line width=2pt] (2.5,0.5)-- (4,2);
\draw [line width=2pt] (1.5,0.5)-- (4,3);
\draw [line width=2pt] (0.5,1.5)-- (3,4);
\draw [line width=2pt] (0.5,2.5)-- (2,4);
\draw [line width=2pt] (0.5,3.5)-- (1,4);
\draw [line width=2pt] (3,0)-- (4,1);
\draw [line width=2pt] (3.5,0.5)-- (4,0);
\draw[black,fill=black] (5,4) circle (1.5pt);
\draw[black,fill=black] (4,5) circle (1.5pt);
\end{axis}
\end{tikzpicture}
\end{center}
Here the fundamental rectangle is given by the $0 \leq \nu_{\Theta} \leq 4$ and $0 \leq \nu_{\Phi} \leq 4$. And the isolated point $(5,4)$ (or $(4,5)$) outside of the rectangle corresponds to the trivial module $J(0,0;5,4) = \mathrm{triv}$.

\item[(2)] When $\Theta = 0$ and $\Phi = -2$, i.e. the $\lambda_a$-blocks are of the form \begin{tikzpicture}
\draw
    (-0.025,0) node {\small $0$} 
 -- (-0.25,0) 
 -- (-0.475, 0) node {\small $0$}
-- (-0.35,-0.5) 
-- (-0.25,-0.5) node {\small $0$}
-- (-0.15,-0.5) 
 -- cycle;

\draw
    (-0.7,0) 
 -- (-0.825,0) node {\small $1$}
-- (-0.95,0) 
-- (-0.875,-0.5) 
-- (-0.825,-0.5) 
-- (-0.775,-0.5) 
 -- cycle;

\draw
    (0.2,0) 
 -- (0.325,0) node {\small $-1$}
-- (0.45,0) 
-- (0.375,-0.5) 
-- (0.325,-0.5) 
-- (0.275,-0.5) 
 -- cycle;

\draw
    (0.7,0) node {\small $-2$}
 -- (0.95,0) 
-- (1.2,0) node {\small $-2$}
-- (1.05,-0.5) 
-- (0.95,-0.5) node {\small $-2$}
-- (0.85,-0.5) 
 -- cycle;

\draw
    (-1.225,0) 
 -- (-1.35,0) node {\small $2$}
-- (-1.475,0) 
-- (-1.4,-0.5) 
-- (-1.35,-0.5) 
-- (-1.3,-0.5) 
 -- cycle;

\draw
    (-1.7,0) 
 -- (-1.825,0) node {\small $3$}
-- (-1.95,0) 
-- (-1.875,-0.5) 
-- (-1.825,-0.5) 
-- (-1.775,-0.5) 
 -- cycle;

\draw
    (1.5,0) 
 -- (1.625,0) node {\small $-3$}
-- (1.75,0) 
-- (1.675,-0.5) 
-- (1.625,-0.5) 
-- (1.575,-0.5) 
 -- cycle;
\end{tikzpicture}. The fundamental rectangle is given by $0 \leq \nu_{\Theta} \leq 4$ and $0 \leq \nu_{\Phi} \leq 2$, and unitary dual of $J(0,-2;\nu_{\Theta}, \nu_{\Phi})$ is given by:
\begin{center}
    \begin{tikzpicture}[line cap=round,line join=round,x=1cm,y=1cm]
\begin{axis}[
x=1cm,y=1cm,
axis lines=middle,
ymajorgrids=true,
xmajorgrids=true,
xmin=-0.5,
xmax=5,
ymin=-0.5,
ymax=2.5,
xtick={0,1,...,5},
ytick={0,1,...,2},]
\clip(-0.5,-0.5) rectangle (5,2.5);
\fill[line width=2pt] (3,0) -- (1,2) -- (0,2) -- (0,0) -- cycle;
\fill[line width=2pt] (3,0) -- (3.5,0.5) -- (4,0) -- cycle;
\draw [line width=2pt] (3,0)-- (1,2);
\draw [line width=2pt] (1,2)-- (0,2);
\draw [line width=2pt] (0,2)-- (0,0);
\draw [line width=2pt] (0,0)-- (3,0);
\draw [line width=2pt] (3,0)-- (3.5,0.5);
\draw [line width=2pt] (3.5,0.5)-- (4,0);
\draw [line width=2pt] (4,0)-- (3,0);
\draw [line width=2pt] (3.5,0.5)-- (4,1);
\end{axis}
\end{tikzpicture} 
\end{center}

\end{itemize}

\end{example}

\begin{proof}
    We will give a sketch proof of the theorem:

\bigskip
\noindent (a) Suppose $(\nu_{\Theta}, \nu_{\Phi})$ is in the fundamental rectangle, we first consider the points on the $\nu_{\Theta}$ or $\nu_{\Phi}$-axis, i.e. $\nu_{\Phi} = 0$ or $\nu_{\Theta} =0$. In the first case,  $J(\Theta,\Phi;\nu_{\Theta},0)$ is the lowest $K$-type subquotient of
$$\mathrm{Ind}
_{M'A'N'}^G(J_1(2\Phi,0) \boxtimes \Sigma' \boxtimes 1) \quad (M'A' := GL(1,\mathbb{C}) \times U(n-1,1)),$$
where $\Sigma'$ is the $U(n-1,1)$-module whose $\theta$-stable datum is obtained from that of $J(\Theta,\Phi;\nu_{\Theta},0)$ by removing the $\Phi$'s in its content and the $\nu_{\Phi} = 0$'s in its $\nu$-coordinates. By the classification theorem of $\widehat{U(n-1,1)}$ in Section \ref{sec-un1}, $\Sigma'$ is unitary and hence $J(\Theta,\Phi;\nu_{\Theta},0)$ is also unitary.

As discussed before the theorem, one only needs to study the $GL(1,\mathbb{C}) \times GL(1,\mathbb{C})$-intertwining operators $s_{\alpha,1}$, $s_{\alpha,2}$ inside the fundamental rectangle. The points where the $s_{\alpha,i}$ are not injective (the reducibility lines) are precisely given by the theorem. Therefore, by continuity arguments, one can conclude that all the ``triangles'' appearing in Theorem \ref{thm-ci}(a) correspond to unitary representations.

As for the ``lines'' 
\begin{equation} \label{eq-line} \nu_{\Theta}-\nu_{\Phi} = \Theta - \Phi +k\end{equation} appearing in Theorem \ref{thm-ci}(a), note that the starting points $(\nu_{\Theta}, \nu_{\Phi}) = (\Theta - \Phi + k,0)$ of the lines correspond the {\it degenerate series representation} (cf. \cite[p.149]{KS83}):
$$J(\Theta,\Phi;\Theta - \Phi +k,0) = \mathrm{Ind}
_{M''A''N''}^G(J_2(2\Theta,2\Phi; 2(\Theta - \Phi +k), 0) \boxtimes \mathrm{triv} \boxtimes 1)\quad (M''A'' = GL(2,\mathbb{C}) \times U(n-2,0)),$$
where $J_2(\mu_1,\mu_2; \nu_1,\nu_2)$ is defined as the lowest $K$-type subquotient of $\mathrm{Ind}_{GL(1)^2}^{GL(2)}(J_1(\mu_1;\nu_1) \boxtimes J_1(\mu_2;\nu_2))$. To see the other points on the line \eqref{eq-line} are unitary, one only needs to show that the modules $J(\Theta,\Phi;\nu_{\Theta},\nu_{\Phi})$ are also of the form
\begin{equation}\label{eq-j2}
J(\Theta,\Phi;\nu_{\Theta},\nu_{\Phi}) = \mathrm{Ind}
_{M''A''N''}^G(J_2(2\Theta,2\Phi; 2\nu_{\Theta}, 2\nu_{\Phi}) \boxtimes \mathrm{triv} \boxtimes 1),\end{equation}
so that the signatures of all $K$-types remain unchanged for all $\nu_{\Phi} \geq 0$ along the line \eqref{eq-line}. To see why \eqref{eq-j2} holds, we only consider the case of $\nu_{\Theta} > \nu_{\Phi} \geq 0$, and the other case is similar. Consider the intertwining operators
\begin{align*}
s_{\alpha,1}:\  &\mathrm{Ind}_{M_2A_2N_2}^G\left(J_1(2\Theta;2\nu_{\Theta}) \boxtimes J_1(2\Phi;2\nu_{\Phi}) \boxtimes \mathrm{triv} \boxtimes 1\right) \\ &\longrightarrow \mathrm{Ind}_{M_2A_2N_2}^G\left( J_1(2\Phi;2\nu_{\Phi}) \boxtimes J_1(2\Theta;2\nu_{\Theta}) \boxtimes \mathrm{triv} \boxtimes 1\right)\end{align*}
and
\begin{align*}
s_{\alpha,2}: \ &\mathrm{Ind}_{M_2A_2N_2}^G\left(J_1(2\Phi;2\nu_{\Phi}) \boxtimes J_1(2\Theta;-2\nu_{\Theta}) \boxtimes \mathrm{triv} \boxtimes 1\right) \\
&\longrightarrow \mathrm{Ind}_{M_2A_2N_2}^G\left(J_1(2\Theta;-2\nu_{\Theta}) \boxtimes  J_1(2\Phi;2\nu_{\Phi}) \boxtimes \mathrm{triv} \boxtimes 1\right)\end{align*}
Note that $\mathrm{im}(s_{\alpha,1})$ is equal to the induced module \eqref{eq-j2}, so we only need to show that $\ker(s_{\alpha,2}) \subseteq \ker(s_{\alpha,1})$: Indeed, by induction in stages, one has that
\begin{equation} \label{eq-salpha1}
\ker(s_{\alpha,1}) =  \mathrm{Ind}
_{M_2A_2N_2}^G(J_1(2\Theta+k; 2\nu_{\Theta}-k) \boxtimes J_1(2\Phi-k; 2\nu_{\Phi}+k) \boxtimes \mathrm{triv} \boxtimes 1) 
\end{equation}
(this is the module $U_2(0)$ in \cite[Equation 8.6(a)]{KS83}). As for $\ker(s_{\alpha,2})$, by consider the $GL(1,\mathbb{C}) \times GL(1,\mathbb{C})$-intertwining operator again, $\ker(s_{\alpha,2}) = 0$ except when $2\nu_{\Phi} \in \mathbb{N}$, where the kernel is of the form
\begin{equation} \label{eq-salpha2}
\ker(s_{\alpha,2}) =  \mathrm{Ind}
_{M_2A_2N_2}^G(J_1(2\Theta+k+2\nu_{\Phi}; -k-2(\Theta-\Phi)) \boxtimes J_1(2\Phi-k-2\nu_{\Phi}; -k) \boxtimes \mathrm{triv} \boxtimes 1).
\end{equation}
By similar analysis using intertwining operators, one concludes that \eqref{eq-salpha2} is an irreducible module when $\nu_{\Phi}\neq 0$. In such a case, $\ker(s_{\alpha,2})$ occurs as a composition factor of $\ker(s_{\alpha,1})$: firstly, note that $\ker(s_{\alpha,1})$ in 
\eqref{eq-salpha1} has the same composition factor as
\[\mathrm{Ind}
_{M_2A_2N_2}^G(J_1(2\Theta+k; 2\nu_{\Theta}-k) \boxtimes J_1(2\Phi-k; -(2\nu_{\Phi}+k)) \boxtimes \mathrm{triv} \boxtimes 1).\]
By applying the $GL(1,\mathbb{C}) \times GL(1,\mathbb{C})$-intertwining operator on the above induced module as before, one concludes that it contains the induced module \eqref{eq-salpha2} as a composition factor (cf. \cite[Equation (8.7)]{KS83}). Now the result follows by noting that \eqref{eq-salpha2} occurs in the standard module \eqref{ind_spherical} with multiplicity one, which can be easily checked by computing the multiplicities of $K$-types, or simply by the last paragraph of \cite[p. 151]{KS83}.

\medskip
To conclude, we have accounted for the unitarizability of the points stated in Theorem \ref{thm-ci}(1). As for the other points inside the fundamental rectangle, the explicit formulas given in Section 10 of \cite{KS83} imply that $J(\Theta,\Phi;\nu_{\Theta},\nu_{\Phi})$ is not unitary on some level $Sym^t \mathfrak{k}^-$ $K$-types $V_{(0,\dots,0|2\Theta+t,2\Phi-t)}$, where $1\leq t\leq \max\{\frac{n-1}{2}-|\Theta|-\Theta+\Phi, \frac{n-1}{2}-|\Phi|-\Phi+\Theta\}$.

More precisely, the induced module \eqref{ind_spherical} contains $V_{(0,\dots,0|2\Theta+t,2\Phi-t)}$ with multiplicity one for all $t \geq 0$. By the calculations in Section \ref{subsec-un1}, the intertwining operators $s_{\beta,i}$ are all positive, so one only needs to consider the  $GL(1,\mathbb{C}) \times GL(1,\mathbb{C})$ intertwining operators $s_{\alpha,i}$. The composition of the intertwining operators $s_{\alpha,2} \circ s_{\alpha,1}$ on $V_{(0,\dots,0|2\Theta+t,2\Phi-t)}$  is equal to the scalar
\[
\prod_{i=1}^t\big(\frac{\nu_{\Theta}+\nu_{\Phi}-(\Theta-\Phi+i)}{\nu_{\Theta}+\nu_{\Phi}+(\Theta-\Phi+i)}\big)\cdot \prod_{i=1}^t\big(\frac{|\nu_{\Theta}-\nu_{\Phi}|-(\Theta-\Phi+i)}{|\nu_{\Theta}-\nu_{\Phi}|+(\Theta-\Phi+i)}\big).
\]
For the points $(\nu_{\Theta},\nu_{\Phi})$ inside the fundamental rectangle not listed in Theorem \ref{thm-ci}(1), there must be some $t \in \mathbb{N}_+$ such that
\[\begin{cases}
 0\leq |\nu_{\Theta}-\nu_{\Phi}|<(\Theta-\Phi)+t<\nu_{\Theta}+\nu_{\Phi}, &  t=1, \\
 (\Theta-\Phi)+t-1<|\nu_{\Theta}-\nu_{\Phi}|<(\Theta-\Phi)+t<\nu_{\Theta}+\nu_{\Phi}, & t\geq 2.\end{cases}\]
Then the signature of $V_{(0,\dots,0|2\Theta+t,2\Phi-t)}$ for this $t$ is negative (cf. \cite[p. 157 (1)]{KS83}). 

\bigskip
\noindent (b) As for the points outside the rectangle, one can again apply the results in Section \ref{subsec-un1} and induction in stages to study the signatures of the following $K$-types in $J(\Theta,\Phi;\nu_{\Theta},\nu_{\Phi})$:

\begin{itemize}
\item The level $\mathfrak{p}^+$ (resp. $\mathfrak{p}^-$) $K$-types $V_{(1,0,\cdots,0\ |\ \Theta-1,\Phi)}$ (resp. $V_{(0,\cdots,0,-1\ |\ \Theta,\Phi+1)}$), which has multiplicity one in the module \eqref{ind_spherical}: such $K$-types exists when $\Theta>\Phi$, and
the intertwining operator $s_{\alpha}$'s are positive, and the determinant of $s_{\beta,2}\circ s_{\beta,1}$ is 
\begin{equation} \label{eq-ppm}
-\frac{2\nu_{\Theta}+2\Theta-(n-1)}{2\nu_{\Theta}-2\Theta+(n-1)}\quad (\textrm{resp.}\ -\frac{2\nu_{\Phi}-2\Phi-(n-1)}{2\nu_{\Phi}+2\Phi+(n-1)}).  
\end{equation}

\item The level $\wedge^2\mathfrak{p}^+$ (resp. $\wedge^2\mathfrak{p}^-$) $K$-types $V_{(1,1,0,\cdots,0\ |\ \Theta-1,\Phi-1)}$ (resp. $V_{(0,\cdots,0,-1,-1\ |\ \Theta+1,\Phi+1)}$), which has multiplicity one in the module \eqref{ind_spherical}: the intertwining operator $s_{\alpha}$'s are positive, and the determinant of $s_{\beta,2}\circ s_{\beta,1}$ is 
\begin{equation} \label{eq-wedge2ppm}
\begin{aligned}
\frac{2\nu_{\Theta}+2\Theta-(n-1)}{2\nu_{\Theta}-2\Theta+(n-1)}\cdot \frac{2\nu_{\Phi}+2\Phi-(n-1)}{2\nu_{\Phi}-2\Phi+(n-1)}\quad 
(\textrm{resp.}\ \frac{2\nu_{\Theta}-2\Theta-(n-1)}{2\nu_{\Theta}+2\Theta+(n-1)}\cdot \frac{2\nu_{\Phi}-2\Phi-(n-1)}{2\nu_{\Phi}+2\Phi+(n-1)}).
\end{aligned}
\end{equation}
\item The level $\mathfrak{k}^-$ $K$-type $V_{(0,\cdots,0\ |\ \Theta+1,\Phi-1)}$, which has multiplicity one in the module \eqref{ind_spherical}: the intertwining operator $s_{\beta}$'s are positive, and the determinant of $s_{\alpha,2}\circ s_{\alpha,1}$ is 
\begin{equation} \label{eq-k-}
\frac{\nu_{\Theta}+\nu_{\Phi}-(\Theta-\Phi+1)}{\nu_{\Theta}+\nu_{\Phi}+(\Theta-\Phi+1)}\cdot \frac{|\nu_{\Theta}-\nu_{\Phi}|-(\Theta-\Phi+1)}{|\nu_{\Theta}-\nu_{\Phi}|+(\Theta-\Phi+1)}.
\end{equation}
\end{itemize}

\bigskip
We now study the unitarity of $J(\Theta,\Phi; \nu_{\Theta}, \nu_{\Phi})$ outside the fundamental rectangle: 

\noindent \fbox{\bf Sub-case I: $\Theta=\Phi$.} In this case, \eqref{eq-wedge2ppm} implies that the points in \[\left\{(\nu_{\Theta},\nu_{\Phi}) \Big\vert \begin{array}{l}\nu_{\Theta}>\frac{n-1}{2}-|\Theta|,\nu_{\Phi}<\frac{n-1}{2}-|\Phi|;\ \text{or}\\ \nu_{\Theta}<\frac{n-1}{2}-|\Theta|, \nu_{\Phi}>\frac{n-1}{2}-|\Phi|.\end{array} \right\}\] 
(above and on the right of the fundamental rectangle) are non-unitary at $\wedge^2\mathfrak{p}^{\pm}$. 
As for the area on the top right corner of the fundamental rectangle,  \eqref{eq-k-} implies that the points in 
\[\left\{(\nu_{\Theta},\nu_{\Phi}) \Big\vert \begin{array}{l}\frac{n-1}{2}-|\Theta|\leq \nu_{\Theta}\leq \frac{n-1}{2}-|\Theta|+1, \\
\frac{n-1}{2}-|\Phi|\leq \nu_{\Phi}\leq \frac{n-1}{2}-|\Phi|+1\end{array}\right\}\] 
are non-unitary up to level $\mathfrak{k}$ except for the points  
$(\frac{n+1}{2}-|\Theta|,\frac{n-1}{2}-|\Phi|)$ and $(\frac{n-1}{2}-|\Theta|,\frac{n+1}{2}-|\Phi|)$.
For the other points on the top right corner, they satisfy the hypothesis of Theorem \ref{thm-fund}, and hence they are non-unitary at level $\mathfrak{p}^+$ $K$-type $V_{(1,0,\dots,0|\Theta,\Phi-1)}$ (for $\Theta = \Phi \geq 0$) or $\mathfrak{p}^-$ $K$-type $V_{(0,\dots,0,-1|\Theta+1,\Phi)}$ (for $\Theta = \Phi \leq 0$).

\medskip
\noindent \fbox{\bf Sub-case II: $\Theta> \Phi$.} There are three possibilities: 
\begin{itemize}
    \item If $\Theta\geq 0\geq \Phi$, then \eqref{eq-ppm} implies that all the points outside the fundamental rectangle correspond to modules which are non-unitary up to level $\mathfrak{p}$.  
    \item If $\Theta>\Phi>0$, \eqref{eq-ppm} and \eqref{eq-wedge2ppm} would imply that all the points outside the fundamental rectangle are non-unitary up to level $\mathfrak{p}$ or $\wedge^2\mathfrak{p}$ except the ray 
    \[\left\{(\nu_{\Theta},\nu_{\Phi})\ |\ \nu_{\Theta}=\frac{n-1}{2}-|\Theta|, \nu_{\Phi}>\frac{n-1}{2}-|\Phi|\right\}.\] 
    Over the ray, \eqref{eq-k-} implies that the points with $\frac{n-1}{2}-|\Phi|<\nu_{\Phi}<\frac{n-1}{2}-|\Phi|+1$ are non-unitary up to level $\mathfrak{k}$, and Theorem \ref{thm-fund} implies that the points with $\nu_{\Phi}>\frac{n-1}{2}-|\Phi|+1$  are non-unitary  up to level $\mathfrak{p}$. Therefore, we are only left with one point $(\frac{n-1}{2}-|\Theta|,\frac{n+1}{2}-|\Phi|)$ outside the fundamental rectangle.
    \item If $0>\Theta>\Phi$, the argument is similar to that of $\Theta>\Phi>0$. There is only one point $(\frac{n+1}{2}-|\Theta|,\frac{n-1}{2}-|\Phi|)$ left outside the fundamental rectangle.
\end{itemize}

For these isolated points whose (non)-unitarity are not determined above, we will see in Section \ref{sec-coh} below that they are indeed all $A_{\mathfrak{q}}(\lambda)$-modules in the weakly fair range, and hence they must be unitary, and the result follows. 
\end{proof}

\subsection{Unitary dual for Case (ii)} \label{subsec-ii}
In Cases (b)(ii) and (c)(ii), there is a either a $(1,1)$ or $(2,2)$-parallelogram in the $\theta$-stable datum. When there is a $(1,1)$-parallelogram block in Case (b)(ii), we assume it is positioned at the $\star$ position without loss of generality. 
We will separate our study into three sub-cases, where the first two are in Case (b)(ii), and the last one is exactly Case (c)(ii).

\smallskip
\noindent \fbox{\bf Sub-case III:\begin{tikzpicture}
\foreach \x in {0,1,2.5,3.5,5,6}
	\draw (\x+0,0)--
 (\x+0.5,0)--(\x+0.3,-0.7)--
 (\x+0.2,-0.7)--
 cycle; 
\node at (0.75,-0.35) {$\cdots$};\node at (3.25,-0.35) {$\cdots$}; \node at (5.75,-0.35) {$\cdots$}; 
	\node at (2.0,-0.35) {$\ast$}; 
 
 \node at (4.55,-0.35) {$\star $}; 
 \draw (4.5,0) -- (4.9,0) -- (4.6,-0.7) -- (4.2,-0.7) -- cycle;
 \node at (0.7,-1.0) {$\underbrace{}_{k}$};
 \node at (3.2,-1.0) {$\underbrace{}_{l}$}; 
 \node at (5.7,-1.0) {$\underbrace{}_{m}$};
\end{tikzpicture}.} 
This is a special case of Case (b)(ii). We begin by partitioning the $\theta$-stable datum into the first $(k+1+l)$ blocks and the last $(m+1)$-blocks, and let $\pi_1 \boxtimes \pi_2$ be the $(\mathfrak{l}',L'\cap K)$-module corresponding to the (shifted) datum, where for $L' = L_1' \times L_2' = U(k+l+1,1) \times U(m+1,1)$ (if $\ast$ is a $(1,1)$-block) or $U(k+l+2,1) \times U(m+1,1)$ (if $\ast$ is a $(2,1)$-block).

We claim that the $(\mathfrak{g},K)$-module is unitary if and only if 
$\pi_1 \boxtimes \pi_2$ is unitary. In particular, $\pi_2$ must be a limit of discrete series representation by the first paragraph of Section \ref{subsec-un1fund}. 
Indeed, by the discussions in 
Section \ref{sec-un1}, $\pi_i$ is either unitary, or it is non-unitary up to level $\mathfrak{l}_i' \cap \mathfrak{p}$. In the latter case, the shapes of the blocks imply that (by \cite[Proposition 4.12]{W22}) the indefinite $(L' \cap K)$-types up to level $\mathfrak{l}' \cap \mathfrak{p}$ in $\pi_1 \boxtimes \pi_2$ is bottom-layer, and hence the $(\mathfrak{g},K)$-module corresponding to the full datum is also non-unitary up to level $\mathfrak{p}$. 

By applying the results of the unitary dual of $U(n,1)$ in Section \ref{sec-un1}, one concludes that the modules corresponding to this sub-case are unitary if and only if the $\nu$-parameter of the $\star$ block is $0$, and the $\nu$-parameter of the $\ast$ block must lie within $0 \leq x \leq \min\{k,l\}+\frac{a}{2}$ if $\ast$ is an $(a,1)$-block for $a = 1$ or $2$.

\medskip
\noindent \fbox{\bf Sub-case IV: 
\begin{tikzpicture}
\foreach \x in {0,1,2.5,3.5,5,6}
	\draw (\x+0,0)--
 (\x+0.5,0)--(\x+0.3,-0.7)--
 (\x+0.2,-0.7)--
 cycle; 
\node at (0.75,-0.35) {$\cdots$};\node at (3.25,-0.35) {$\cdots$}; \node at (5.75,-0.35) {$\cdots$}; 
	\node at (2.0,-0.35) {$\ast$}; 
 
 \node at (4.55,-0.35) {$\star $}; 
 \draw (4.2,0) -- (4.6,0) -- (4.9,-0.7) -- (4.5,-0.7) -- cycle;
 \node at (0.7,-1.0) {$\underbrace{}_{k}$};
 \node at (3.2,-1.0) {$\underbrace{}_{l}$}; 
 \node at (5.7,-1.0) {$\underbrace{}_{m}$};
\end{tikzpicture}.}
Similar to the previous sub-case, the $\nu$-parameter of the $\star$ block must be zero, and the $\nu$-parameter of the $\ast$ block lie within $0 \leq \nu \leq \min\{k,l+1\}+\frac{a}{2}$ if $\ast$ is an $(a,1)$-block for $a \in \{1,2\}$, otherwise the $(\mathfrak{l}_1', L_1' \cap K)$-module 
corresponding to the first $(k+l+2)$-blocks of the sub-datum
\begin{equation} \label{eq-bii2}
\begin{tikzpicture}
\foreach \x in {0,1,2.5,3.5}
	\draw (\x+0,0)--
 (\x+0.5,0)--(\x+0.3,-0.7)--
 (\x+0.2,-0.7)--
 cycle; 
\node at (0.75,-0.35) {$\cdots$};\node at (3.25,-0.35) {$\cdots$}; 
	\node at (2.0,-0.35) {$\ast$}; 
 
 \node at (4.55,-0.35) {$\star $}; 
 \draw (4.2,0) -- (4.6,0) -- (4.9,-0.7) -- (4.5,-0.7) -- cycle;
 \node at (0.7,-1.0) {$\underbrace{}_{k}$};
 \node at (3.2,-1.0) {$\underbrace{}_{l}$};

  \draw[arrows = {-Stealth[]}]          (2,0)   to [out=90,in=0]node[above]{$\nu$} (-0.3,0.3);
\draw[arrows = {-Stealth[]}]          (2,0)   to [out=90,in=180]node[above]{$-\nu$} (4.3,0.3);
\end{tikzpicture}\end{equation}
is non-unitary up to level $\mathfrak{l}_1' \cap \mathfrak{p}$, and bottom-layer arguments imply that the full $(\mathfrak{g},K)$-module is also non-unitary up to level $\mathfrak{p}$.

\smallskip
To complete the proof, let $P_{a}(k,l;\nu)$ be the irreducible $(\mathfrak{l}_1', L_1'\cap K)$-module corresponding to \eqref{eq-bii2}. We {\it claim} that for $0 \leq \nu < \min\{k,l+1\}+\frac{a}{2}$, $P_a(k,l;\nu)$ is isomorphic to the parabolically induced module:
\[P_{a}(k,l;\nu) = \mathrm{Ind}_{M_{L_1'}A_{L_1'}N_{L_1'}}^{L_1'}\left( J_1(2\gamma,2\nu) \boxtimes \Sigma' \boxtimes 1\right),\]
where 
\begin{itemize}
\item $M_{L_1'}A_{L_1'} = GL(1,\mathbb{C}) \times U(k+l+a,1)$, 
\item $\gamma$ is the content of the $\ast$-block, and 
\item $\Sigma'$ is the limit of discrete representation of $U(k+l+a,1)$ corresponding to the $\theta$-stable datum by removing two $\gamma$'s from \eqref{eq-bii2} (cf. Step (ii') in Section \ref{sec-tempered}).
\end{itemize}
Assuming that the claim holds for now, then by continuity arguments, one can also show that $P_a(k,l;\nu)$ is still unitary at $\nu = \min\{k,l+1\}+\frac{a}{2}$. In the general case when there are $m$ $(1,0)$-blocks on the right of \eqref{eq-bii2}, these blocks corresponds to a unitary representation $\pi_2$ of $U(m,0)$, and by Lemma \ref{lem-good}, $P_a(k,l;\nu) \boxtimes \pi_2$ is a unitary $(\mathfrak{l}', L' \cap K)$-module in the weakly good range. Therefore its unitarity is preserved under cohomological induction.

\medskip
\noindent \underline{\it Proof of claim:} Suppose $k < l+1$. Then one can separate the $\theta$-stable datum \eqref{eq-bii2} corresponding to $P_a(k,l;\nu)$ into $\mathcal{D}_1 \sqcup \mathcal{D}_2$, where $\mathcal{D}_1$ consists of the left $(2k+1)$ $\gamma$-blocks with $\ast$ in the middle, and $\mathcal{D}_2$ are the remaining blocks. For $0 \leq \nu < k+\frac{a}{2}$, the irreducible module $\phi_1$ corresponding to $\mathcal{D}_1$ is equal to
the standard module by looking at the kernel of the intertwining operator in Section \ref{subsec-un1}, 
and the irreducible module $\phi_2$ corresponding to $\mathcal{D}_2$ is automatically a standard module. 
So the standard module corresponding to $P_a(k,l;\nu)$ is equal to $R_{\mathfrak{q}}(\phi_1 \boxtimes \phi_2)$ by (cohomological) induction in stages. Moreover, since the latter is cohomologically induced in the good range (cf. Lemma \ref{lem-good}), it is automatically irreducible, and hence $P_a(k,l;\nu)$ is equal to the standard module.

\smallskip
In the second case when $k \geq l+1$, one can apply good range arguments again to reduce the study to $k = l+1$. As before, one can conclude that $P_{a}(l+1,l;\nu)$ is irreducible for $\nu < l+\frac{a}{2}$. 

At $\nu = l+\frac{a}{2}$, let $\Psi_1$ be a standard module of $U(2l+a+1,1)$ constructed in Section \ref{sec-irrep}, and $\Psi_2$ be a(n irreducible) limit of discrete series module of $U(1,1)$ corresponding to the following sub-data of \eqref{eq-bii2}: 
\begin{equation} \Psi_1 \quad \longleftrightarrow \quad \label{eq-blocksl}
    \begin{tikzpicture}
\foreach \x in {0,1,2.5,3.5}
	\draw (\x+0,0)--
 (\x+0.5,0)--(\x+0.3,-0.7)--
 (\x+0.2,-0.7)--
 cycle; 
\node at (0.75,-0.35) {$\cdots$};\node at (3.25,-0.35) {$\cdots$}; 
	\node at (2.0,-0.35) {$\ast$}; 
 
 \node at (0.7,-1.0) {$\underbrace{}_{l+1}$};
 \node at (3.2,-1.0) {$\underbrace{}_{l}$};

  \draw[arrows = {-Stealth[]}]          (2,0)   to [out=90,in=0]node[above]{\scriptsize $l+\frac{a}{2}$} (-0,0.3);
\draw[arrows = {-Stealth[]}]          (2,0)   to [out=90,in=180]node[above]{\scriptsize $-(l+\frac{a}{2})$} (4.3,0.3);
\end{tikzpicture}
\quad \quad \Psi_2 \longleftrightarrow \begin{tikzpicture}
     \node at (4.55,-0.35) {$\star $}; 
 \draw (4.2,0) -- (4.6,0) -- (4.9,-0.7) -- (4.5,-0.7) -- cycle;
\end{tikzpicture}
\end{equation} 
By the results in Section \ref{sec-irrep} and cohomological induction in stages, one has
$\mathcal{R}_{\mathfrak{q}}(\Psi_1 \boxtimes \Psi_2) = \mathrm{Ind}_{M_{L_1'}A_{L_1'}N_{L_1'}}^{L_1'}\left( J_1(2\gamma,2\nu) \boxtimes \Sigma' \boxtimes 1\right)$. Hence it suffices to show that $$\mathcal{R}_{\mathfrak{q}}(\Psi_1 \boxtimes \Psi_2) = P_{a}(l+1,l;l+\frac{a}{2})$$ is irreducible.

\smallskip
Now study the structure of the standard module $\Psi_1$: Indeed, by looking at the kernel of intertwining operator in Section \ref{subsec-un1}, $\Psi_1 = \psi_1 \oplus \psi_1'$ has two composition factors given by:
\begin{itemize}
\item The lowest $K$-type subquotient $\psi_1$ of $\Psi_1$, where $\psi_1|_{U(2l+a+1,1) \cap K} = \bigoplus_{k\geq 0} V^{U(2l+a+1,1)}_{(k,0,\dots,0|-k-1)}$ is a lowest weight representation (up to a power of $(\frac{\det}{|\det|})^{\frac{1}{2}} = V_{(\frac{1}{2},\dots, \frac{1}{2}|\frac{1}{2})}$); and
\item $\psi_1'$, where $\psi_1'|_{U(2l+a+1,1) \cap K} = \bigoplus_{b,c \geq 0} V^{U(2l+a+1,1)}_{(b,0\dots,-1-c|-b+c)}$ is the kernel of the intertwining operator (up to the same power of $(\frac{\det}{|\det|})^{\frac{1}{2}}$ as above). More explicitly, $\psi_1'$ corresponds to the datum:
\begin{center}
$\psi_1' \quad \longleftrightarrow \quad \begin{cases}
        \begin{tikzpicture}
\foreach \x in {0,1,2.5,3.5, 4.5}
	\draw (\x+0,0)--
 (\x+0.5,0)--(\x+0.3,-0.7)--
 (\x+0.2,-0.7)--
 cycle; 
\node at (0.75,-0.35) {$\cdots$};\node at (3.25,-0.35) {$\cdots$}; 
	\draw (1.8,0) -- (2.2,0) -- (2.2,-0.7) -- (1.8,-0.7) -- cycle;

\node at (4.75,-0.3) {$\blacktriangle$};
 
 \node at (0.7,-1.0) {$\underbrace{}_{l+1}$};
 \node at (3.2,-1.0) {$\underbrace{}_{l}$};

  \draw[arrows = {-Stealth[]}]          (2,0)   to [out=90,in=0]node[above]{\scriptsize $l+\frac{a-1}{2}$} (-0,0.3);
\draw[arrows = {-Stealth[]}]          (2,0)   to [out=90,in=180]node[above]{\scriptsize $-(l+\frac{a-1}{2})$} (4,0.3);
\end{tikzpicture} &\text{
if}\ \ast\ \text{is a}\ (2,1)-\text{block}\\
        \begin{tikzpicture}
\foreach \x in {0,1,2.5,3.5, 4.5}
	\draw (\x+0,0)--
 (\x+0.5,0)--(\x+0.3,-0.7)--
 (\x+0.2,-0.7)--
 cycle; 
\node at (0.75,-0.35) {$\cdots$};\node at (3.25,-0.35) {$\cdots$}; 
	\draw (1.6,0) -- (2.4,0) -- (2.2,-0.7) -- (1.8,-0.7) -- cycle;
 
 \node at (0.7,-1.0) {$\underbrace{}_{l}$};
 \node at (3.2,-1.0) {$\underbrace{}_{l-1}$};

  \draw[arrows = {-Stealth[]}]          (2,0)   to [out=90,in=0]node[above]{\scriptsize $l+\frac{a-1}{2}$} (-0,0.3);
\draw[arrows = {-Stealth[]}]          (2,0)   to [out=90,in=180]node[above]{\scriptsize $-(l+\frac{a-1}{2})$} (4,0.3);

\node at (4.75,-0.3) {$\blacktriangle$};
\end{tikzpicture}
&\text{
if}\ \ast\ \text{is a}\ (1,1)-\text{block} \end{cases}$
\end{center}
where in both cases, the isolated $(1,0)$-block $\blacktriangle$ has content equal to that of $\star$ in \eqref{eq-bii2}. 
\end{itemize}
The structure of $\Psi_1 = \psi \oplus \psi_1'$ can be proved by induction on the number of $\gamma$-entries appearing in \eqref{eq-blocksl}, by noting that the sub-datum of $\psi_1'$ above without the $\blacktriangle$-block is equal to `$\Psi_1$' in \eqref{eq-blocksl} with one less $\gamma$-entry. 

As a consequence, one has:
\begin{equation} \label{eq-pi1'}
\mathcal{R}_{\mathfrak{q}}(\Psi_1 \boxtimes \Psi_2) = \mathcal{R}_{\mathfrak{q}}(\psi_1 \boxtimes \Psi_2) \oplus \mathcal{R}_{\mathfrak{q}}(\psi_1' \boxtimes \Psi_2),\end{equation}
where both modules on the right of \eqref{eq-pi1'} are cohomologically induced in the weakly good range, and the first term must be nonzero (and hence irreducible) since it has the same lowest $K$-type as the left-hand side of \eqref{eq-pi1'}. 

Now consider $\mathcal{R}_{\mathfrak{q}}(\psi_1' \boxtimes \Psi_2)$. 
By induction in stages, consider the sub-datum consisting of the rightmost two blocks:	
\[
\begin{tikzpicture}
    \draw (0,0)-- (0.25,0) node {\small $\gamma$} --
 (0.5,0)--(0.3,-0.7)--
 (0.2,-0.7)--
 cycle;  
 \draw (0.6,0) node (1) {}--(0.85,0) node {\small $\gamma$} --
 (1.1,0) node (2) {}--(1.3,-0.7)--(1.05,-0.7) node {\small $\gamma$} --
 (0.8,-0.7)--
 cycle;
\node at (0.25,-0.3) {$\blacktriangle$};
\node at (0.95,-0.3) {$\star$};
 \path (1) edge     [loop above]       node[] {$0$}         (1);
\path (2) edge     [loop above]       node[] {$0$}         (2);
\end{tikzpicture}\] 
The module corresponding to the above datum is cohomologically induced from a character in $\mathfrak{u}(1,0)$ and a limit of discrete series representation in $\mathfrak{u}(1,1)$ in the weakly good range, and has infinitesimal character $(\gamma, \gamma | \gamma)$. It is easy (by, for instance, directly applying the results in \cite{T01}) to conclude that the $U(2,1)$-module with the above datum and hence $\mathcal{R}_{\mathfrak{q}}(\psi_1' \boxtimes \Psi_2)$ must be zero. Hence \eqref{eq-pi1'} implies that $\mathcal{R}_{\mathfrak{q}}(\Psi_1 \boxtimes \Psi_2) = P(l+1,l;l+\frac{a}{2})$ is irreducible, and the claim holds for $\nu = l+\frac{a}{2}$. Finally, since there are no reducibility points for $P_{a}(l+1,l;\nu)$ with $l+\frac{a}{2} < \nu < l+1+\frac{a}{2}$, so the general claim follows. \qed

\bigskip
\noindent \fbox{\bf Sub-case V: \begin{tikzpicture}
\foreach \x in {0,1,2.5,3.5}
	\draw (\x+0,0)--(\x+0.5,0)--(\x+0.3,-0.7)--
 (\x+0.2,-0.7)--
 cycle; 
\node at (0.75,-0.35) {$\cdots$};\node at (3.25,-0.35) {$\cdots$};
	\node at (2.0,-0.35) {$\ast$}; 
 \node at (0.7,-1.0) {$\underbrace{}_{k}$};
 \node at (3.2,-1.0) {$\underbrace{}_{m}$}; 
\end{tikzpicture}, where $\ast$ is a $(2,2)$-parallelogram.} We begin by recalling the result of \cite{KS82}, which can also be obtained by \texttt{atlas}:
\begin{proposition} \label{prop-u22}
    Let $G = U(2,2)$. Consider the fundamental data
 \begin{tikzpicture}
\draw
    (-0.15,0) 
 -- (-0.3,0) node {\scriptsize $\frac{1}{2}$}
 -- (-0.45, 0) 
-- (-0.6,0) node {\scriptsize $\frac{1}{2}$}
-- (-0.75,0)
-- (-0.6,-0.5) 
-- (-0.45,-0.5) node {\scriptsize $\frac{1}{2}$}
-- (-0.3,-0.5)
-- (-0.15,-0.5) node {\scriptsize $\frac{1}{2}$}
-- (0,-0.5)
 -- cycle;

 \draw[arrows = {-Stealth[]}]          (-0.15,0)   to [out=90,in=00]node[above]{\scriptsize $-y,-x$} (0.15,0.3);
\draw[arrows = {-Stealth[]}]          (-0.75,0)   to [out=90,in=270]node[above]{\scriptsize $x,y$} (-0.9,0.3);
\end{tikzpicture}
or  \begin{tikzpicture}
\draw
    (0.15,0) 
 -- (0.3,0) node {\scriptsize $\frac{1}{2}$}
 -- (0.45, 0) 
-- (0.6,0) node {\scriptsize $\frac{1}{2}$}
-- (0.75,0)
-- (0.6,-0.5) 
-- (0.45,-0.5) node {\scriptsize $\frac{1}{2}$}
-- (0.3,-0.5)
-- (0.15,-0.5) node {\scriptsize $\frac{1}{2}$}
-- (0,-0.5)
 -- cycle;

 \draw[arrows = {-Stealth[]}]          (0.15,0)   to [out=90,in=00]node[above]{\scriptsize $x,y$} (-0.15,0.3);
\draw[arrows = {-Stealth[]}]          (0.75,0)   to [out=90,in=180]node[above]{\scriptsize $-y,-x$} (0.9,0.3);
\end{tikzpicture} with $x \geq y \geq 0$. Then the following holds:
\begin{enumerate}
    \item[(a)] The two data correspond to the same module if and only if $y > 0$. In such a case, the lowest $K$-types of the corresponding module are $V_{(1,1|0,0)}$ and $V_{(0,0|1,1)}$
    \item[(b)] Both data correspond to a unitary module if and only if $x+y \leq 1$. Otherwise, the module has indefinite signatures on
    \begin{itemize}
        \item \fbox{$y=0$:}\ $V_{(1,1|0,0)}$ and $V_{(1,0|1,0)}$ for  \begin{tikzpicture}
\draw
    (-0.15,0) 
 -- (-0.3,0) node {\scriptsize $\frac{1}{2}$}
 -- (-0.45, 0) 
-- (-0.6,0) node {\scriptsize $\frac{1}{2}$}
-- (-0.75,0)
-- (-0.6,-0.5) 
-- (-0.45,-0.5) node {\scriptsize $\frac{1}{2}$}
-- (-0.3,-0.5)
-- (-0.15,-0.5) node {\scriptsize $\frac{1}{2}$}
-- (0,-0.5)
 -- cycle;

 \draw[arrows = {-Stealth[]}]          (-0.15,0)   to [out=90,in=00]node[above]{\scriptsize $0,-x$} (0.15,0.3);
\draw[arrows = {-Stealth[]}]          (-0.75,0)   to [out=90,in=270]node[above]{\scriptsize $x,0$} (-0.9,0.3);
\end{tikzpicture}; or 
$V_{(0,0|1,1)}$ and $V_{(1,0|1,0)}$ for \begin{tikzpicture}
\draw
    (0.15,0) 
 -- (0.3,0) node {\scriptsize $\frac{1}{2}$}
 -- (0.45, 0) 
-- (0.6,0) node {\scriptsize $\frac{1}{2}$}
-- (0.75,0)
-- (0.6,-0.5) 
-- (0.45,-0.5) node {\scriptsize $\frac{1}{2}$}
-- (0.3,-0.5)
-- (0.15,-0.5) node {\scriptsize $\frac{1}{2}$}
-- (0,-0.5)
 -- cycle;

 \draw[arrows = {-Stealth[]}]          (0.15,0)   to [out=90,in=00]node[above]{\scriptsize $x,0$} (-0.15,0.3);
\draw[arrows = {-Stealth[]}]          (0.75,0)   to [out=90,in=180]node[above]{\scriptsize $0,-x$} (0.9,0.3);
\end{tikzpicture};

\smallskip
\item \fbox{$y>0$:} One of lowest $K$-types and a $\mathfrak{p}$-level $K$-type: $V_{(1,1|0,0)}$ and $V_{(1,0|1,0)}$;  $V_{(0,0|1,1)}$ and $V_{(1,0|1,0)}$.
    \end{itemize}
\end{enumerate}
\end{proposition}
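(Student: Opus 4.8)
The plan is to deduce Proposition~\ref{prop-u22} from \cite{KS82} (where $\widehat{U(2,2)}$ is determined completely), or equivalently to check it with \texttt{atlas}; for completeness I will also indicate how it fits into the intertwining-operator framework used above. Recall that $G = U(2,2)$ has real rank $2$, with restricted root system of type $C_2$: the positive roots are the short roots $e_1 - e_2$, $e_1 + e_2$ (each of multiplicity $2$) and the long roots $2e_1$, $2e_2$ (each of multiplicity $1$). By Method II of Section~\ref{sec-irrep}, the module attached to a $(2,2)$-parallelogram of content $\frac12$ with $\nu$-coordinates $(x,y)$, $x \ge y \ge 0$, in one of its two orientations (call the resulting modules $X_{+}$ and $X_{-}$) is the irreducible subquotient of the minimal principal series
\[
I(x,y) := \mathrm{Ind}_{M_0A_0N_0}^{G}\big(J_1(1;2x)\boxtimes J_1(1;2y)\boxtimes 1\big),\qquad M_0A_0 = GL(1,\mathbb{C})\times GL(1,\mathbb{C}),
\]
(with $N_0$ chosen so that the $\mathfrak{a}$-parameter $(x,y)$ is dominant) that contains the $K$-type $V_{(1,1|0,0)}$ for $X_+$ and $V_{(0,0|1,1)}$ for $X_-$. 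The long intertwining operator $A(w_0)\colon I(x,y)\to I(-x,-y)$ factors as a composition of four rank-one operators, one per positive root, the two short-root factors being of $GL(2,\mathbb{C})$-type and the two long-root factors being $U(1,1)$-type operators whose action on $K$-types is given by the formulas of Section~\ref{subsec-un1} with $n=1$, $2m=1$. The three $K$-types that matter — $V_{(1,1|0,0)}$, $V_{(0,0|1,1)}$, and the $\mathfrak{p}$-level $K$-type $V_{(1,0|1,0)}$ (which lies in $V_{(1,1|0,0)}\otimes\mathfrak{p}^-$ and in $V_{(0,0|1,1)}\otimes\mathfrak{p}^+$) — each occur in $I(x,y)$ with multiplicity one.

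For part (a) I would first identify the reducibility walls of $I(x,y)$. The only wall on which a rank-one factor fails to be an isomorphism on the $K$-submodule generated by $V_{(1,1|0,0)}$ and $V_{(0,0|1,1)}$ is the long-root wall $\{y=0\}$: there the $2e_2$-factor is the intertwining operator of the non-spherical principal series of $SL(2,\mathbb{R})$ at its reducible tempered point $\nu=0$, which splits $I(x,0)$ into two summands with lowest $K$-types $V_{(1,1|0,0)}$ and $V_{(0,0|1,1)}$ respectively — these lift to the two orientations, so $X_+ \ne X_-$ when $y=0$. For $y>0$ that factor is injective on the relevant $K$-isotypic subspaces, and a check of the remaining walls ($x-y\in\mathbb{Z}_{>0}$, $x+y\in\mathbb{Z}_{>0}$, $x\in\mathbb{Z}_{>0}$, coming from the short-root factors and the $2e_1$-factor) shows that none of them separates $V_{(1,1|0,0)}$ from $V_{(0,0|1,1)}$; hence the two orientations share the same irreducible subquotient, whose lowest $K$-types are the two displayed ones. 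This gives (a).

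For part (b) I would normalize $A(w_0)$ to the identity on $V_{(1,1|0,0)}$ (resp.\ $V_{(0,0|1,1)}$) and multiply the four rank-one scalars along the factorization to obtain the scalar on each of the other two multiplicity-one $K$-types. The expected outcome is: for $0\le x+y\le 1$ all these scalars are $\ge 0$, while for $x+y>1$ exactly one of the scalars on $V_{(1,0|1,0)}$ and on the opposite lowest $K$-type is negative — so the form is indefinite precisely on the pairs $\{V_{(1,1|0,0)},V_{(1,0|1,0)}\}$ and $\{V_{(0,0|1,1)},V_{(1,0|1,0)}\}$; and when $y=0$ each orientation retains only one of the two lowest $K$-types, which is why the certificate is stated for that particular $K$-type. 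For unitarity on $\{x+y\le 1\}$: the interior of this triangle meets no reducibility wall, so the module is irreducible and Hermitian there, and its form is definite by continuity from the tempered point $(0,0)$; on the boundary $x+y=1$ it remains unitary, either by taking limits of the forms above or because those boundary points are weakly fair $A_{\mathfrak{q}}(\lambda)$-modules, as will be seen in Section~\ref{sec-coh}. This gives (b).

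The step I expect to be the main obstacle is the $K$-type bookkeeping in part (b): locating $V_{(1,0|1,0)}$ and the opposite lowest $K$-type inside each $I(w\cdot(x,y))$ appearing in the factorization, fixing the normalization of each of the four rank-one operators, and assembling their scalars into a single expression whose sign is visibly governed by $x+y$ (and by $x$ on the wall $y=0$). Determining, when $y=0$, which of the two lowest $K$-types is the lowest $K$-type of a given orientation — so that the non-unitarity certificate is attached to the correct $K$-type — is the other delicate point; comparison with the $U(n,1)$ analysis of Section~\ref{sec-un1} and with \cite{KS82} is the quickest way to settle it.
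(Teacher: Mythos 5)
The paper provides no proof of this proposition; the sentence immediately preceding it reads ``We begin by recalling the result of \cite{KS82}, which can also be obtained by \texttt{atlas},'' and the proposition is simply quoted as a known fact. Your opening sentence — deduce it from \cite{KS82} or verify it with \texttt{atlas} — is therefore precisely the paper's own approach, and is a complete justification at the level of rigor the paper aims for here.

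Your intertwining-operator sketch is a reasonable supplement that goes beyond what the paper does. It is consistent with the methods used elsewhere in the text: the factorization of $A(w_0)$ into two $GL(2,\mathbb{C})$-type short-root factors and two $U(1,1)$-type long-root factors is exactly the pattern used in Section \ref{subsec-i} for the basic cases of $U(n,2)$, and the continuity argument from the tempered point $(0,0)$ together with the observation that the boundary $x+y=1$ is covered by Section \ref{sec-coh} (case (6)) is sound. Two points would need to be tightened if you wanted the sketch to stand on its own. First, in part (a) the assertion that the $2e_2$-wall $\{y=0\}$ is the \emph{only} wall that separates $V_{(1,1|0,0)}$ from $V_{(0,0|1,1)}$ is plausible — the splitting at $y=0$ is the direct-sum decomposition $I(x,0)=\mathrm{Ind}(J\boxtimes D^+)\oplus\mathrm{Ind}(J\boxtimes D^-)$ along the two factors of the $U(1,1)$ tempered inducing datum — but the ``check of the remaining walls'' is exactly the nontrivial content and is left as an assertion. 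Second, in part (b) you write ``the expected outcome is\dots'', which frankly flags that the four rank-one scalars on $V_{(1,0|1,0)}$ and on the opposite lowest $K$-type were not actually multiplied out; this is the computation that would need to be done, and you are right that the normalization bookkeeping (and, for $y=0$, deciding which lowest $K$-type goes with which orientation) is the delicate step. None of this is a gap relative to the paper, since the paper itself offloads the proposition to \cite{KS82}, but it is a gap in the proposed standalone intertwining-operator derivation.
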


Now we go to the study of Sub-case V in general. Since in the case when the $U(2,2)$-module in Proposition \ref{prop-u22} is non-unitary, its indefiniteness occurs at level $\mathfrak{p}^{\pm}$. One can check it is bottom-layer, and hence the full module is not unitary at the same level.

\medskip
To conclude this section, one has:
\begin{theorem} \label{thm-cii}
   For all the fundamental data in Case (ii), the corresponding module is either unitary (whose $\nu$-parameters are given by the discussions above) or it is non-unitary up to level $\mathfrak{p}$.
\end{theorem}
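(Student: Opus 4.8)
The plan is to assemble the theorem from the case-by-case analysis of Sub-cases III, IV and V carried out above. Every fundamental datum in Case (ii) contains exactly one $(1,1)$- or $(2,2)$-parallelogram, and these three sub-cases exhaust the possibilities according to the size and position of this parallelogram relative to the distinguished $(r,1)$- or $(r,2)$-block $\ast$. In each sub-case I will either exhibit unitarity together with the explicit $\nu$-range, or exhibit a $K$-type up to level $\mathfrak{p}$ on which the invariant Hermitian form is indefinite; the dichotomy in the statement then follows by combining the three.

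For Sub-case III I would invoke the partition of the $\theta$-stable datum into the first $k+1+l$ blocks and the last $m+1$ blocks, write the associated $(\mathfrak{l}',L'\cap K)$-module as $\pi_1\boxtimes\pi_2$ with $L'=L_1'\times L_2'$ a product of two $U(\ast,1)$'s, and appeal to the classification of $\widehat{U(n,1)}$ (Corollary \ref{cor-un1}). Either both $\pi_1$ and $\pi_2$ are unitary --- which forces the $\nu$-parameter of the $\star$-block to vanish and that of $\ast$ to lie in $[0,\min\{k,l\}+\tfrac{a}{2}]$, and then Lemma \ref{lem-good} places the whole module in the good range so that unitarity is preserved by Theorem \ref{vanishing}(b) --- or one of $\pi_1,\pi_2$ is non-unitary up to level $\mathfrak{l}_i'\cap\mathfrak{p}$, in which case the shape criterion of \cite[Proposition 4.12]{W22} together with the bottom-layer theorem (Theorem \ref{thm-bottom}) transports the indefinite signature to level-$\mathfrak{p}$ $K$-types of the full module.

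Sub-case IV is the heart of the matter, and the main obstacle is the irreducibility claim for $P_a(k,l;\nu)$. I would first use good-range reductions (Lemma \ref{lem-good}) to bring the question down to $k\le l+1$, and then to the boundary value $\nu=l+\tfrac{a}{2}$ in the critical case $k=l+1$; for $\nu$ strictly below the boundary the module is standard because the relevant intertwining operators of $U(n,1)$ from Section \ref{subsec-un1} are injective, and induction in stages identifies it with $\mathrm{Ind}(J_1(2\gamma,2\nu)\boxtimes\Sigma'\boxtimes 1)$. At the boundary I would analyse the standard module $\Psi_1$ of $U(2l+a+1,1)$: tracking the kernel of the $U(n,1)$-intertwining operator, it splits as $\psi_1\oplus\psi_1'$, where $\psi_1$ is the lowest-$K$-type piece and $\psi_1'$ corresponds to an explicitly described datum (proved by induction on the number of $\gamma$-entries). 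Cohomological induction in stages then gives $\mathcal{R}_{\mathfrak{q}}(\Psi_1\boxtimes\Psi_2)=\mathcal{R}_{\mathfrak{q}}(\psi_1\boxtimes\Psi_2)\oplus\mathcal{R}_{\mathfrak{q}}(\psi_1'\boxtimes\Psi_2)$, with the first summand nonzero and irreducible by a lowest-$K$-type comparison. The crux --- the step I expect to cost the most --- is showing that the second summand vanishes: reduce by induction in stages to the rightmost two blocks, which give a $U(2,1)$-module cohomologically induced in the weakly good range with infinitesimal character $(\gamma,\gamma\,|\,\gamma)$, and conclude it is zero using the $U(2,1)$ representation theory of \cite{T01}. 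Once $P_a(k,l;\nu)$ is known to equal the parabolically induced module, unitarity on $[0,\min\{k,l+1\}+\tfrac{a}{2}]$ follows by continuity from the unitary spherical endpoint, and Lemma \ref{lem-good} propagates it after tensoring with the unitary $U(m,0)$-piece; for $\nu$ beyond this range the bottom-layer argument applied to the sub-datum \eqref{eq-bii2} gives non-unitarity up to level $\mathfrak{p}$.

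Finally, for Sub-case V I would partition off the $(2,2)$-parallelogram block $\ast$ and the $k+m$ surrounding $(1,0)$-blocks, reducing the unitarity question to the $U(2,2)$-module described in Proposition \ref{prop-u22} tensored with unitary $U(k,0)\times U(m,0)$-pieces. When $x+y\le 1$ the datum is unitary and the good-range criterion (Lemma \ref{lem-good}) propagates this; when $x+y>1$, Proposition \ref{prop-u22}(b) records indefinite signatures occurring on $\mathfrak{p}^{\pm}$-level $K$-types, which one checks are $\mathfrak{q}'$-bottom layer, so Theorem \ref{thm-bottom} yields non-unitarity up to level $\mathfrak{p}$ of the full module. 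Combining Sub-cases III, IV and V, every fundamental datum in Case (ii) is either unitary with the $\nu$-parameters described above, or non-unitary up to level $\mathfrak{p}$, which is the assertion of the theorem.
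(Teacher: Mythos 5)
Your proposal follows the paper's own argument essentially verbatim: the same three-way split into Sub-cases III, IV, V according to the orientation and size of the parallelogram, the same partition-and-bottom-layer reduction in Sub-cases III and V via Corollary \ref{cor-un1}, Proposition \ref{prop-u22} and Theorem \ref{thm-bottom}, and the same irreducibility claim for $P_a(k,l;\nu)$ in Sub-case IV, reduced via Lemma \ref{lem-good} to the boundary case $k=l+1$, $\nu=l+\tfrac{a}{2}$, where the splitting $\Psi_1=\psi_1\oplus\psi_1'$ and the vanishing of $\mathcal{R}_{\mathfrak{q}}(\psi_1'\boxtimes\Psi_2)$ on the rightmost two blocks are the decisive steps. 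The approach and all the key ideas match the paper's proof.
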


\subsection{Unitary dual for Case (iii)}
For Case (b)(iii), assume the $(1,1)$ or $(2,1)$ block is at $*$ position, e.g. 
\begin{center}\begin{tikzpicture}
\foreach \x in {0,1,2.5,3.5,5,6}
	\draw (\x+0,0)--
 (\x+0.5,0)--(\x+0.3,-0.7)--
 (\x+0.2,-0.7)--
 cycle; 
\node at (0.75,-0.35) {$\cdots$};\node at (3.25,-0.35) {$\cdots$}; \node at (5.75,-0.35) {$\cdots$}; 

\draw (1.65,0)--(2.35,0)--(2.2,-0.7)--(1.8,-0.7)--cycle;
 \node at (2,-0.35) {$\ast $}; 
\draw (4.25,-0.7)--(4.75,-0.7)--(4.55,0)--(4.45,0)--cycle; 
 \node at (4.5,-0.35) {$\star $}; 
 \node at (0.7,-1.0) {$\underbrace{}_{k}$};
 \node at (3.2,-1.0) {$\underbrace{}_{l}$}; 
 \node at (5.7,-1.0) {$\underbrace{}_{m}$};
\end{tikzpicture}\end{center} 
This is similar to Sub-case III in Section \ref{subsec-ii}: Partition the $\theta$-stable datum into the first $(k+1+l)$ blocks and the last $(m+1)$-blocks, and consider the 
$(\mathfrak{l}',L'\cap K)$-module 
$\pi_1 \boxtimes \pi_2$ be the corresponding to the (shifted) datum. Then the $(\mathfrak{g},K)$-module is unitary if and only if the 
$(\mathfrak{l}',L'\cap K)$-module 
$\pi_1 \boxtimes \pi_2$ is unitary, i.e. the $\nu$-parameter of the $\ast$ block must lie within $0 \leq \nu \leq \min\{k,l\} + \frac{a}{2}$ if $\ast$ is an $(a,1)$-block. Otherwise, the module is not unitary up to level $\mathfrak{p}$.

\bigskip
Finally, for Case (c)(iii), the module is unitary if and only if the $(1,2)$-datum corresponds to a unitary module, i.e. the $\nu$-value of the $(1,2)$-block is $0 \leq \nu \leq 1$. Otherwise, the corresponding module in $U(1,2)$ is not unitary up to $\mathfrak{p}^+$ and $\mathfrak{p}^-$, and the bottom layer arguments imply that the module is also non-unitary up to $\mathfrak{p}$. 

\bigskip
To conclude this section, we have:
\begin{theorem} \label{thm-ciii}
    For all the fundamental data in Case (iii), the corresponding module is either unitary (whose $\nu$-parameters are given by the discussions above), or it is non-unitary up to level $\mathfrak{p}$.
\end{theorem}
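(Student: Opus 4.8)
The plan is to reduce both families in Case (iii) --- Case (b)(iii), where the $\theta$-stable datum consists of $(1,0)$-blocks, one $(a,1)$-rectangle or $(a,1)$-trapezoid $\ast$ (with $a\in\{1,2\}$) carrying a continuous parameter $\nu$, and one $(0,1)$-block $\star$; and Case (c)(iii), where it consists of $(1,0)$-blocks and a single $(1,2)$-trapezoid $\ast$ carrying $\nu$ --- to the already-settled unitary duals of $U(n,1)$ (Theorem \ref{thm-un1fund}, Corollary \ref{cor-un1}) and of $U(1,2)\cong U(2,1)$, by choosing a $\theta$-stable partition of the datum that isolates the block carrying the continuous parameter, and then transporting both unitarity and non-unitarity across cohomological induction using Theorem \ref{vanishing} and the bottom-layer machinery of Section \ref{subsec-bottom}. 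Throughout, I normalise contents by a power of $\tfrac{\det}{|\det|}$, and in Case (b)(iii) arrange $\ast$ to the left of $\star$ (the mirror datum gives the contragredient, cf. Example \ref{eg-u41}).

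For Case (c)(iii) I would partition $\mathcal{D}=\mathcal{D}_{\mathrm{left}}\sqcup\{\ast\}\sqcup\mathcal{D}_{\mathrm{right}}$ into the $(1,0)$-blocks left of $\ast$, the block $\ast$ alone, and the $(1,0)$-blocks right of $\ast$, so that $\mathfrak{l}_0'=\mathfrak{u}(k,0)\oplus\mathfrak{u}(1,2)\oplus\mathfrak{u}(m,0)$. Because the $(1,0)$-block contents lie at integer distance on either side of the content $\gamma_\ast$ of $\ast$, a short segment computation via Lemma \ref{lem-good} shows this $\mathfrak{q}'$ is in the good range for $\nu<1$ and remains weakly good at $\nu=1$; as the two compact factors are finite-dimensional and unitary, Theorem \ref{vanishing} then shows $\mathcal{R}_{\mathfrak{q}'}(\pi_{\mathrm{left}}\boxtimes\pi_\ast\boxtimes\pi_{\mathrm{right}})$ is irreducible --- hence equal to $X$ --- and that $X$ is unitary if and only if the $U(1,2)$-module $\pi_\ast$ attached to $\ast$ is, for every $\nu\in[0,1]$. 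Under $U(1,2)\cong U(2,1)$, Theorem \ref{thm-un1fund} gives that $\pi_\ast$ is unitary precisely when $0\le\nu\le 1$ and is non-unitary up to level $\mathfrak{p}^{+}$ and $\mathfrak{p}^{-}$ otherwise. For $\nu>1$, \cite[Proposition 4.12]{W22} --- read off from the shapes of $\mathcal{D}$ --- places these indefinite $(L'\cap K)$-types among the $\mathfrak{q}'$-bottom layer $K$-types of $X$ at level $\mathfrak{p}$, so Theorem \ref{thm-bottom} forces $X$ to be non-unitary up to level $\mathfrak{p}$. This yields the dichotomy for Case (c)(iii).

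For Case (b)(iii) I would partition $\mathcal{D}=\mathcal{D}_1\sqcup\mathcal{D}_2$, where $\mathcal{D}_1$ collects $\ast$ together with the $(1,0)$-blocks to its left and those between $\ast$ and $\star$ (so $L_1'=U(k+l+a,1)$) and $\mathcal{D}_2$ collects $\star$ and the remaining $m$ $(1,0)$-blocks (so $L_2'=U(m,1)$); the feature that keeps this case simpler than Sub-case IV is that $\star$ fits into $\mathcal{D}_2$, where it merely contributes to a discrete series. Indeed $\mathcal{D}_2$ involves only $(1,0)$- and $(0,1)$-blocks, so $\pi_2$ is a discrete series of $U(m,1)$, hence unitary, while $\pi_1$ is exactly a fundamental $U(k+l+a,1)$-module of the type classified in Theorem \ref{thm-un1fund}: unitary iff the $\nu$-coordinate of $\ast$ lies in $[0,\min\{k,l\}+\tfrac a2]$, and non-unitary up to level $\mathfrak{l}_1'\cap\mathfrak{p}$ otherwise. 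A segment computation shows $\mathfrak{q}'$ is in the good range on the interior of this interval and weakly good at its right endpoint, so --- using Theorem \ref{vanishing} and the irreducibility of $\pi_1\boxtimes\pi_2$ --- the module $X=\mathcal{R}_{\mathfrak{q}'}(\pi_1\boxtimes\pi_2)$ is unitary for all $\nu$ in the interval; and for $\nu$ beyond the bound, \cite[Proposition 4.12]{W22} together with Theorem \ref{thm-bottom} carries the indefiniteness of $\pi_1$ over to level $\mathfrak{p}$ of $X$. With the explicit $\nu$-ranges recorded in the statement, this gives the dichotomy.

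The bulk of the proof is thus a translation of the $U(n,1)$ and $U(1,2)$ classifications through bottom-layer $K$-types, and the two points that genuinely require care are: (i) checking, case by case on the block shapes, that the partitioning parabolic $\mathfrak{q}'$ stays \emph{weakly} good all the way to the boundary of the conjectured unitary region, so that Theorem \ref{vanishing} still applies there and one avoids the delicate parabolic-induction-plus-continuity argument needed in Sub-case IV; and (ii) verifying through \cite[Proposition 4.12]{W22} that the non-unitarity certificates imported from the smaller groups really do appear at level $\mathfrak{p}$ of $X$. I expect (i) to be the main potential pitfall: at the extreme value of $\nu$ one must confirm that the two segments of the partition abut but do not overlap, so that weak goodness --- and with it the unitarity implication --- is preserved.
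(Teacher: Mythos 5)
Your proposal is correct and follows essentially the same route as the paper: the same partitions (first $k+1+l$ blocks versus last $m+1$ blocks for Case (b)(iii), and isolating the $(1,2)$-trapezoid for Case (c)(iii)), the same reduction to the $U(n,1)$ and $U(1,2)$ classifications, and the same two tools — Theorem \ref{vanishing} in the weakly good range for unitarity and the bottom-layer argument via \cite[Proposition 4.12]{W22} for non-unitarity. The only tiny imprecision is in your remark that the partitioning parabolic is weakly good ``at the right endpoint'': in Case (b)(iii) with $k<l$ one is still strictly good at $\nu=\min\{k,l\}+\tfrac a2$, and only when $k\geq l$ with $a=2$ does the boundary of the unitary interval actually touch the weakly good boundary — but either way Theorem \ref{vanishing}(b) applies and the argument goes through unchanged.
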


\subsection{General case}
Now consider the general case of $U(n,2)$-modules where the $\theta$-stable datum $\mathcal{D}$ is partitioned into a disjoint union of fundamental data $\bigsqcup_{i=1}^k \mathcal{F}_i$. Then each datum must correspond to a $U(p_i,q_i)$-module with $\sum_i q_i = 2$. By carefully checking the bottom layer $K$-types, one concludes that:
\begin{corollary} \label{cor-un2}
    Let $G = U(n,2)$, and $X$ be a Hermitian, irreducible module with real infinitesimal character. Suppose its corresponding combinatorial $\theta$-stable datum $\mathcal{D} = \bigsqcup_{i=1}^k \mathcal{F}_i$ 
    is partitioned into fundamental data. Then $X$ is unitary if and only if each fundamental datum $\mathcal{F}_i$ is of the form given in Theorems \ref{thm-un1fund}, \ref{thm-ci}, \ref{thm-cii} and \ref{thm-ciii}. 
\end{corollary}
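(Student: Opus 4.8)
The plan is to follow the template of Corollary~\ref{cor-un1}. Fix the canonical partition $\mathcal{D}=\bigsqcup_{i=1}^{k}\mathcal{F}_i$ of the $\theta$-stable datum of $X$ into fundamental sub-data, so that the gap between consecutive sub-data exceeds $1$; let $\mathfrak{q}_0'=\mathfrak{l}_0'+\mathfrak{u}_0'$ with $L'=U(p_1,q_1)\times\cdots\times U(p_k,q_k)$ be the associated $\theta$-stable parabolic, and let $\pi_i$ be the irreducible $(\mathfrak{l}_i',L_i'\cap K)$-module attached to the shifted datum $\widetilde{\mathcal{F}_i}$, so that $X$ is the lowest $K$-type subquotient of $\mathcal{R}_{\mathfrak{q}'}(\pi_1\boxtimes\cdots\boxtimes\pi_k)$ (Remark~\ref{rmk-shifted}). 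Since $\sum_i q_i=2$, for each $i$ either $q_i=0$, or exactly one index has $q_i=2$ with all others $q_j=0$, or exactly two indices have $q_i=q_j=1$ with all others $q_\ell=0$. Accordingly, ``$\mathcal{F}_i$ is of a listed form'' means: $q_i=0$ (no condition; $\pi_i$ is a finite-dimensional unitary $U(p_i,0)$-module); or $q_i=1$ and $\mathcal{F}_i$ is as in Theorem~\ref{thm-un1fund} (or its upside-down version for $U(1,1)$, $U(0,1)$, cf.\ Remark~\ref{rmk-un1}); or $q_i=2$ and $\mathcal{F}_i$ is as in one of Theorems~\ref{thm-ci}, \ref{thm-cii}, \ref{thm-ciii}, together with Proposition~\ref{prop-u22} for the $(2,2)$-parallelogram and the obvious upside-down versions for $U(1,2)$, $U(0,2)$.

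For the ``only if'' direction, suppose some $\mathcal{F}_i$ is not of a listed form; then $q_i\in\{1,2\}$. Theorem~\ref{thm-un1fund} (if $q_i=1$), or Theorems~\ref{thm-ci}, \ref{thm-cii}, \ref{thm-ciii} and Proposition~\ref{prop-u22} (if $q_i=2$), show that $\pi_i$ is non-unitary up to level $V$ for a finite-dimensional $(L_i'\cap K)$-module $V$ among $\mathfrak{l}_i'\cap\mathfrak{p}$, $\mathrm{Sym}^t(\mathfrak{l}_i'\cap\mathfrak{k}^-)$, $\mathfrak{l}_i'\cap\mathfrak{k}^-$ and $\wedge^2(\mathfrak{l}_i'\cap\mathfrak{p}^{\pm})$: that is, some $(L_i'\cap K)$-type $V_{\mu_i}$ of $\pi_i$ has signature opposite to that of a lowest $(L_i'\cap K)$-type of $\pi_i$. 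Form $\beta$ from $\mu_i$ together with lowest $(L_j'\cap K)$-types of the other factors, and $\beta'$ from the corresponding lowest $(L_i'\cap K)$-type in place of $\mu_i$, each shifted by $2\rho(\mathfrak{u}'\cap\mathfrak{p})$; then $\beta,\beta'$ have the form \eqref{eq-bottombeta} and $V_{\beta'}$ is automatically $\mathfrak{q}'$-bottom layer. The crux is to verify that $V_\beta$ is $\mathfrak{q}'$-bottom layer as well: for the level-$\mathfrak{p}$ certificates this is \cite[Proposition 4.12]{W22}, and for the deeper certificates occurring in Case~(i) it follows by the same computation on the shapes of the blocks of $\mathcal{F}_i$ relative to the adjacent blocks of $\mathcal{D}$, exploiting that the gaps between distinct fundamental sub-data exceed $1$. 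Granting this, Theorem~\ref{thm-bottom} transports the opposite signatures of $V_{\beta'}$ and $V_\beta$ to $X$, so $X$ is non-unitary.

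For the ``if'' direction, suppose every $\mathcal{F}_i$ is of a listed form. Then each $\pi_i$ is unitary: trivially for $q_i=0$; by Corollary~\ref{cor-un1} for $q_i=1$; and by Theorems~\ref{thm-ci}, \ref{thm-cii}, \ref{thm-ciii} and Proposition~\ref{prop-u22} for $q_i=2$ (the isolated points outside the fundamental rectangle in Theorem~\ref{thm-ci}(b) being unitary since they are realized as weakly fair $A_{\mathfrak{q}}(\lambda)$-modules in Section~\ref{sec-coh}). Moreover, in each listed case the $\nu$-parameters are bounded by exactly the values appearing in those theorems, so inspecting the segments $[e_i,b_i]$ of the partition --- and recalling that the $q_j=0$ sub-data carry no $\nu$-coordinates --- yields $e_1\ge b_1>e_2\ge b_2>\cdots>e_k\ge b_k$; by Lemma~\ref{lem-good}(a), $X$ is cohomologically induced from $\mathfrak{q}'$ in the good range. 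Then Theorem~\ref{vanishing} applies: $\mathcal{R}_{\mathfrak{q}'}(\pi_1\boxtimes\cdots\boxtimes\pi_k)$ is irreducible (it is nonzero, since $X$ carries the lowest $K$-type $\delta$ read off from $\mathcal{D}$) and unitary. This establishes both implications.

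I expect the main obstacle to be the bottom-layer verification in the ``only if'' direction for the deeper-level non-unitarity certificates of Case~(i) --- the $\mathrm{Sym}^t(\mathfrak{k}^-)$-types $V_{(0,\dots,0\,|\,2\Theta+t,\,2\Phi-t)}$ with $t$ possibly as large as $\max\{\tfrac{n-1}{2}-|\Theta|-\Theta+\Phi,\ \tfrac{n-1}{2}-|\Phi|-\Phi+\Theta\}$, together with the $\mathfrak{k}^-$- and $\wedge^2\mathfrak{p}^{\pm}$-types used outside the fundamental rectangle. For each admissible partition one must check that the shift $2\rho(\mathfrak{u}'\cap\mathfrak{p})$ is large enough for these $K$-types to retain the form \eqref{eq-bottombeta}; this is a finite combinatorial check on block shapes, made tractable by the gap-greater-than-$1$ condition. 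A lesser but genuine source of case-work is the bookkeeping for the boundary Levi factors $U(1,1)$, $U(0,1)$, $U(1,2)$, $U(0,2)$ and $U(2,2)$, where one invokes the upside-down analogues of the fundamental classifications (and Proposition~\ref{prop-u22} for $U(2,2)$).
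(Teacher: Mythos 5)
Your ``only if'' direction matches the paper's approach: invoke Theorems~\ref{thm-ci}--\ref{thm-ciii} (with the $U(n,1)$ classification and Proposition~\ref{prop-u22} for the low-rank Levi factors) to produce non-unitary certificates for the offending $\pi_i$, and then check that the indefinite $(L_i'\cap K)$-types are $\mathfrak{q}'$-bottom layer, with special attention to the $\mathrm{Sym}^t(\mathfrak{k}^-)$-level certificates from Case~(i). You correctly flag this as the delicate point, and it is exactly what the paper's Example~\ref{eg-symt} addresses.

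The gap is in your ``if'' direction, specifically the blanket claim that the segments always satisfy $e_1\ge b_1>e_2\ge b_2>\cdots>e_k\ge b_k$, so that $X$ is cohomologically induced from $\mathfrak{q}'$ in the (strict) good range. This is false precisely when some $\mathcal{F}_i$ is an isolated basic module of Theorem~\ref{thm-ci}(b) outside the fundamental rectangle, e.g.\ $J(\Theta,\Phi;\tfrac{n_i-1}{2}-|\Theta|,\tfrac{n_i+1}{2}-|\Phi|)$. There the segment $[e_i,b_i]$ of $\mathcal{F}_i$ reaches $e_i=\tfrac{n_i+1}{2}$, which exceeds the largest $\lambda_a$-content of $\mathcal{F}_i$ (namely $\tfrac{n_i-3}{2}$) by $2$, whereas the gap to the neighbouring fundamental datum is only guaranteed to exceed $1$; thus $b_j>e_i$ (or even $b_j\ge e_i$) can fail, and neither the good nor the weakly good range condition of Lemma~\ref{lem-good} holds. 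Theorem~\ref{vanishing} therefore does not apply to this configuration. The paper isolates this as the single exception and handles it differently: since $q_i=2$ forces the remaining $\mathcal{F}_j$ to be $U(p_j,0)$-data, one combines the $\theta$-stable parabolic realizing $\mathcal{F}_i$ as an $A_{\mathfrak{q}}(\lambda)$-module (Section~\ref{sec-coh}) with the compact factors to exhibit $X$ itself as an $A_{\mathfrak{q}}(\lambda)$-module in the \emph{fair} range, and concludes unitarity from there. Your appeal to the $A_{\mathfrak{q}}(\lambda)$-realization of the isolated points only establishes unitarity of $\pi_i$, not of $X$; you still need the fair-range induction argument (or some substitute) to transport that unitarity to $X$ when the good-range hypothesis breaks.
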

\begin{proof}
    The `only if' part can be obtained from Theorems \ref{thm-ci}, \ref{thm-cii} and \ref{thm-ciii} and a careful analysis of bottom-layer $K$-types. Special attention is given in the case of Theorem \ref{thm-ci}(a) when non-unitarity is up to $\mathrm{Sym}^t(\mathfrak{k}^-) = V_{(0,\dots,0|t,-t)}$, where one needs to check that the $t$'s are small enough to be bottom layer (see Example \ref{eg-symt} below). The `if' part is similar to the $U(n,1)$ case in Corollary \ref{cor-un1}: by looking at the possibilities of $\mathcal{F}_i$, one can invoke Lemma \ref{lem-good} to conclude that $X$ is cohomologically induced from the $\theta$-stable parabolic subalgebra defined by the partition $\mathcal{D} = \bigsqcup_{i=1}^k \mathcal{F}_i$ 
    in the weakly good range {\it with only one exception} - when $\mathcal{F}_i$ corresponds to the isolated basic module $J(\Theta,\Phi;\nu_{\Theta},\nu_{\Phi})$ in Theorem \ref{thm-ci}(b). In such a case, we will see in Section \ref{sec-coh} that $\mathcal{F}_i$ corresponds to an $A_{\mathfrak{q}}(\lambda)$-module, the other fundamental datum $\mathcal{F}_j$ can only be unitary representations of $U(p_j,0)$. By induction in stages, this implies $X$ is an $A_{\mathfrak{q}}(\lambda)$-module in the fair range, and hence it is also unitary.
\end{proof}

\begin{example} \label{eg-symt}
Let $G = U(10,3)$ and consider a $\theta$-stable data $\mathcal{D}$ whose $\lambda_a$-blocks are of the form:
\begin{center}
    \begin{tikzpicture}
\draw
    (-0.3,0) 
 -- (0.0,0) node {\small $0$\ $0$\ $0$}
 -- (0.3, 0) 
-- (0.15,-0.5) 
-- (0.0,-0.5) node {\small $0$\ $0$}
-- (-0.15,-0.5) 
 -- cycle;

\draw
    (0.7,0) 
 -- (0.825,0) node {\small $-1$}
-- (0.95,0) 
-- (0.875,-0.5) 
-- (0.825,-0.5) 
-- (0.775,-0.5) 
 -- cycle;

\draw
    (-0.7,0) 
 -- (-0.825,0) node {\small $1$}
-- (-0.95,0) 
-- (-0.875,-0.5) 
-- (-0.825,-0.5) 
-- (-0.775,-0.5) 
 -- cycle;

\draw
    (-1.125,0) 
 -- (-1.25,0) node {\small $2$}
-- (-1.375,0) 
-- (-1.3,-0.5) 
-- (-1.25,-0.5) 
-- (-1.2,-0.5) 
 -- cycle;

\draw
    (1.225,0) 
 -- (1.35,0) node {\small $-2$}
-- (1.475,0) 
-- (1.4,-0.5) 
-- (1.35,-0.5) 
-- (1.3,-0.5) 
 -- cycle;

\draw
    (1.7,0) 
 -- (1.825,0) node {\small $-3$}
-- (1.95,0) 
-- (1.875,-0.5) 
-- (1.825,-0.5) 
-- (1.775,-0.5) 
 -- cycle;

\draw
    (2.5,0) 
 -- (2.625,0) node {\small $-5$}
-- (2.75,0) 
-- (2.675,-0.5) 
-- (2.625,-0.5) 
-- (2.575,-0.5) 
 -- cycle;

\draw
    (-1.6,0) 
 -- (-1.725,0) node {\small $3$}
-- (-1.85,0) 
-- (-1.775,-0.5) 
-- (-1.725,-0.5) 
-- (-1.675,-0.5) 
 -- cycle;

\draw
    (-2.4,0) 
-- (-2.3,0) 
-- (-2.2,-0.5) 
-- (-2.35,-0.5) node {\small $5$}
-- (-2.5,-0.5) 
 -- cycle;
 \end{tikzpicture}
 \end{center}
Then $\mathcal{D} = \mathcal{F}_1 \sqcup \mathcal{F}_2 \sqcup \mathcal{F}_3$ is divided into three fundamental data with $\mathfrak{l}_0' =  \mathfrak{u}(0,1) \oplus \mathfrak{u}(9,2) \oplus \mathfrak{u}(1,0)$ satisfying the hypothesis of Remark \ref{rnk-un2}. Note that $\mathcal{F}_2$ corresponds to Case (c)(i) (or more precisely Example \ref{eg-ci}(1)) in Section \ref{subsec-i}. 

By Theorem \ref{thm-ci}, the non-unitary certificates of $\mathcal{F}_2$ are {\bf up to} level:
\begin{itemize}
\item $Sym^4(\mathfrak{k}^-) = (0, \dots, 0|4,-4)$, 
\item $\wedge^2(\mathfrak{p}^+) = (1,1,0,\dots,0|-1,-1)$, and 
\item $\wedge^2(\mathfrak{p}^-) = (0,\dots,0,-1,-1|1,1)$. 
\end{itemize}
Meanwhile, the lowest $K$-type of any fundamental representations corresponding to $\mathcal{F}_2$ has highest weight $(-1,\dots,-1,-2|9,1,1)$. 
Note that the weights
\begin{align*}
(-1,\dots,-1,-2|9,1,1)+(0,\dots,0,0|0,4,-4) &= (-1,\dots,-1,-2|9,5,-3)\\
(-1,\dots,-1,-2|9,1,1)+(1,1,0,\dots,0|0,-1,-1) &= (0,0,-1,\dots,-1,-2|9,0,0)\\
(-1,\dots,-1,-2|9,1,1)+(0,\dots,0,-1,-1,0|0,1,1) &= (-1,\dots,-1,-2,-2,-2|9,2,2)
\end{align*}
are all dominant, and hence all these $K$-types are $\mathfrak{q}'$-bottom layer. In other words, the (non)-unitarity criteria of $U(9,2)$ `extend' to $U(10,3)$, that is, the representation corresponding to $\mathcal{D}$ is unitary if and only if the fundamental representation corresponding to $\mathcal{F}_2$ is unitary.
\end{example}

Similar to Remark \ref{rmk-un1}, one has:
\begin{remark} \label{rnk-un2}
    Let $G = U(p,q)$, and $X$ be a $(\mathfrak{g},K)$-module such that its corresponding $\theta$-stable datum $\mathcal{D} = \bigsqcup_{i=1}^k \mathcal{F}_i$, where each fundamental datum $\mathcal{F}_i$ is of real rank $\leq 2$. Then $X$ is unitary only if each $\mathcal{F}_i$ is of the form given in Theorem \ref{thm-un1fund}, \ref{thm-ci}, \ref{thm-cii} and \ref{thm-ciii} (here, as in the case of Remark \ref{rmk-un1}, we are also allowing fundamental datum for $U(2,q)$). 
\end{remark}

We expect that the `if' statement in Remark \ref{rnk-un2} holds by some easy intertwining operator arguments. In any case, this leads us to the following:
\begin{conjecture}
    Let $G = U(p,q)$. Suppose the irreducible $(\mathfrak{g},K)$-module $X$ corresponds to the $\theta$-stable datum $\mathcal{D} = \bigsqcup_{i=1}^k \mathcal{F}_i$. Then $X$ is unitary if and only if each $\mathcal{F}_i$ corresponds to a unitary $(\mathfrak{l}_i',L_i' \cap K)$-module.
\end{conjecture}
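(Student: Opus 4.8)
We sketch a strategy towards this conjecture, building on the ideas developed above. The first and decisive step is to classify the \emph{fundamental} unitary dual of $U(p,q)$ with real infinitesimal character, together with an explicit description of the \emph{non-unitary certificates} of each non-unitary fundamental module, i.e. a finite-dimensional $K$-module $V$, chosen as small as possible, on whose $K$-types up to level $V$ the Hermitian form is indefinite. As indicated in the introduction, for integral infinitesimal characters this should follow by generalizing Barbasch's spherical computation in~\cite{B04}, with the real-rank $\le 2$ results of Sections~\ref{sec-un1} and~\ref{sec-un2} (Theorems~\ref{thm-un1fund}, \ref{thm-ci}, \ref{thm-cii}, \ref{thm-ciii}) serving as evidence and as the base of an induction on $\min\{p,q\}$; the general real infinitesimal character is then handled by the same bottom-layer bookkeeping together with real parabolic induction.

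For the ``only if'' direction, suppose some $\mathcal{F}_j$ corresponds to a \emph{non}-unitary fundamental module $\pi_j$ of $L_j' = U(p_j,q_j)$. By the classification above, $\pi_j$ is non-unitary up to level $V_j$ for a $K$-module $V_j$ lying at one of the levels appearing in Theorems~\ref{thm-ci}, \ref{thm-cii}, \ref{thm-ciii} (level $\mathfrak{p}$, $\mathrm{Sym}^t(\mathfrak{k}^-)$, or $\wedge^2\mathfrak{p}^\pm$, relative to $L_j'$). Since consecutive fundamental data are separated by a gap $>1$, one checks, exactly as in Example~\ref{eg-symt}, that the weights realizing the indefinite $(L'\cap K)$-types of $\pi_1\boxtimes\cdots\boxtimes\pi_k$ stay dominant after adding $2\rho(\mathfrak{u}'\cap\mathfrak{p})$; hence by~\cite[Proposition~4.12]{W22} and Theorem~\ref{thm-bottom} the corresponding $K$-types are $\mathfrak{q}'$-bottom layer in $X$ with the same indefinite signature, so $X$ is not unitary. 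The only delicate point is the bound on the level $V_j$: one needs the exponents $t$ in the $\mathrm{Sym}^t(\mathfrak{k}^-)$-certificates to be dominated by the ``width'' of $\mathcal{F}_j$ together with the adjacent gaps, which is exactly what the explicit formula in Theorem~\ref{thm-ci}(a) provides and what the general classification should continue to provide.

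For the ``if'' direction, suppose every $\mathcal{F}_i$ corresponds to a unitary $\pi_i$, and write $X = \mathcal{R}_{\mathfrak{q}'}(\pi_1\boxtimes\cdots\boxtimes\pi_k)$ for the $\theta$-stable parabolic $\mathfrak{q}'$ cut out by the partition. Using Lemma~\ref{lem-good} together with the fact that for a \emph{unitary} fundamental module the $\nu$-parameters are confined to a bounded region (the ``fundamental rectangle'' and its analogues), one shows that, outside a short explicit list of exceptions, the segments $[e_i,b_i]$ still satisfy $e_1\ge b_1>e_2\ge b_2>\cdots>e_k\ge b_k$, so the induction is in the weakly good range and Theorem~\ref{vanishing}(b) gives that $X$ is unitary. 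The exceptions are the isolated unitary fundamental modules lying \emph{outside} their fundamental rectangle, such as the points in Theorem~\ref{thm-ci}(b); as in the proof of Corollary~\ref{cor-un2}, each of these is an $A_{\mathfrak{q}}(\lambda)$-module in the weakly fair range, and whenever such an $\mathcal{F}_i$ occurs the separation forces every other $\mathcal{F}_j$ to be a unitary module of a compact group $U(p_j,0)$, so by induction in stages $X$ is again an $A_{\mathfrak{q}}(\lambda)$-module in the weakly fair range and hence unitary.

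The main obstacle is twofold. First, everything rests on the prior classification of the fundamental unitary dual of $U(p,q)$, which is not yet available in general and is the subject of separate work; without it one has no description of the $\pi_i$ or of their non-unitary certificates, and the whole reduction is empty. Second, even granting that classification, one must prove the two quantitative inputs used above: that the levels of the non-unitary certificates are small enough to remain $\mathfrak{q}'$-bottom layer across a gap $>1$ (the ``only if'' direction), and that unitary fundamental modules have $\nu$-parameters tame enough to place the cohomological induction in the weakly good range apart from a finite list of $A_{\mathfrak{q}}(\lambda)$-type exceptions (the ``if'' direction). Both hold for real rank $\le 2$ and for spherical representations, but a uniform proof for all $U(p,q)$ seems to require a more conceptual description of the non-unitary certificates of fundamental modules than the case-by-case intertwining-operator computations invoked here.
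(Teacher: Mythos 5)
The statement you address is left as an open \emph{conjecture} in the paper: the authors offer no proof of it and introduce it precisely because the general case lies beyond what Corollaries~\ref{cor-un1} and \ref{cor-un2} and Remark~\ref{rnk-un2} establish for $U(n,1)$ and $U(n,2)$. Your proposal is therefore rightly framed as a strategy sketch rather than a proof, and the strategy is the one advertised in Section~\ref{sec-fund}: classify the fundamental unitary dual of $U(p,q)$ together with explicit non-unitary certificates, then transport non-unitarity to general modules via bottom-layer $K$-types and unitarity via cohomological induction in the weakly good range. Your diagnosis of the two quantitative inputs on which such a reduction would rest --- that the level of each non-unitary certificate be dominated by the gaps between neighbouring fundamental sub-data, and that the $\nu$-parameters of unitary fundamental modules be tame enough to place the induction in the weakly good range outside a finite list of $A_{\mathfrak{q}}(\lambda)$-type exceptions --- is accurate, and you are candid that neither is currently proved beyond real rank $\leq 2$ and the spherical case. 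In that sense your proposal correctly reproduces the paper's intended blueprint and honestly names the missing ingredient (the fundamental unitary dual of $U(p,q)$, the subject of the announced follow-up work).

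One step in your ``if'' direction does not generalize as stated, however. You claim that whenever an isolated $A_{\mathfrak{q}}(\lambda)$-type fundamental piece $\mathcal{F}_i$ occurs, ``the separation forces every other $\mathcal{F}_j$ to be a unitary module of a compact group $U(p_j,0)$.'' That is a fact special to $U(n,2)$: the isolated points of Theorem~\ref{thm-ci}(b) arise in fundamental pieces with $q_i=2$, so when the ambient group has $q=2$ every other piece must have $q_j=0$. For $U(p,q)$ with $q\geq 3$, an isolated-point-type piece with $q_i=2$ could sit alongside a second non-compact piece $\mathcal{F}_j$ with $q_j\geq 1$, in which case $X$ is neither a single $A_{\mathfrak{q}}(\lambda)$-module by induction in stages nor cohomologically induced in the weakly good range from the unitary $\pi_1\boxtimes\cdots\boxtimes\pi_k$ (since $\mathcal{F}_i$ itself violates the good-range inequalities of Lemma~\ref{lem-good}(a)). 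One would have to show either that no unitary $X$ arises from such a configuration or supply some other positivity-preserving construction; and in higher real rank there may be genuinely new isolated exceptions beyond those seen in Theorem~\ref{thm-ci}(b). Both issues are, of course, subsumed in the larger missing ingredient you identify, but the compactness claim as written would need to be removed or replaced even granting the fundamental classification.
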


\section{$A_{\mathfrak{q}}(\lambda)$-modules} \label{sec-coh}
In this section, we will determine which of the representations in Section \ref{sec-un1} and Section \ref{sec-un2} are cohomologically induced from a unitary character $Z = \mathbb{C}_{\lambda}$, i.e. $A_{\mathfrak{q}}(\lambda)$-modules. In such a case, the results in Theorem \ref{vanishing} can be generalized into:
\begin{theorem}
Let $\mathbb{C}_{\lambda}$ be a unitary $(\mathfrak{l},L\cap K)$-module. Suppose $\lambda$ is in the weakly fair range (cf. Definition \ref{good range}). Then the cohomologically induced module
$\mathcal{L}_{\mathfrak{q}}^S(\mathbb{C}_{\lambda})$ is zero for degrees $S \neq  \dim(\mathfrak{u} \cap \mathfrak{k})$. As for $S =  \dim(\mathfrak{u} \cap \mathfrak{k})$, the module
$A_{\mathfrak{q}}(\lambda) := \mathcal{L}_{\mathfrak{q}}^S(\mathbb{C}_{\lambda})$ is either unitary or zero.
\end{theorem}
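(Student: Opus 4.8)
The plan is to run Vogan's deformation argument (Theorem 1.3 of \cite{V84}; see also \cite{KV95}): slide $\lambda$ within the weakly fair chamber out to the good range, where Theorem \ref{vanishing} supplies vanishing, irreducibility, and unitarity, and then control how the $\mathfrak{u}$-cohomology and the invariant Hermitian form change as one slides back. Concretely, let $\mathfrak{z}$ be the center of $\mathfrak{l}$, fix $\xi \in \mathfrak{z}^*$ with $\langle \xi, \alpha|_{\mathfrak{z}}\rangle > 0$ for every $\alpha \in \Delta(\mathfrak{u},\mathfrak{h})$, and set $\lambda_t := \lambda + t\xi$ for $t \ge 0$. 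Because $[\mathfrak{l},\mathfrak{l}]$ acts trivially on $\mathbb{C}_{\lambda_t}$, this is again a unitary character of a finite cover of $L$ (harmless by Remark \ref{rmk-shifted}), the weakly fair hypothesis at $t=0$ says precisely that $\lambda_t + \rho(\mathfrak{u})$ stays dominant on $\mathfrak{z}$ for all $t \ge 0$, and for $t \gg 0$ the weight $\lambda_t + \rho(\mathfrak{u})$ lies in the good range.

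For the vanishing I would use that $\mathcal{L}_{\mathfrak{q}}^{S}(\mathbb{C}_{\lambda_t})$ is automatically zero outside a fixed finite range of degrees with top value $\dim(\mathfrak{u}\cap\mathfrak{k})$, so the issue is vanishing in the lower degrees; for $t \gg 0$ this is Theorem \ref{vanishing}. To push it down, the key observation is that the $K$-multiplicities of the Euler characteristic $\sum_S (-1)^S [\mathcal{L}^S_{\mathfrak{q}}(\mathbb{C}_{\lambda_t})]$ are given by a fixed Blattner-type expression, which for $t \gg 0$ equals $(-1)^{\dim(\mathfrak{u}\cap\mathfrak{k})}$ times the character of the single nonzero module $A_{\mathfrak{q}}(\lambda_t)$ and is therefore effective; since a nonzero $\mathcal{L}^S_{\mathfrak{q}}$ would meet the bottom layer with the sign $(-1)^S$, effectiveness of this Euler characteristic for all $t \ge 0$ forces $\mathcal{L}^S_{\mathfrak{q}}(\mathbb{C}_{\lambda_t}) = 0$ for $S \ne \dim(\mathfrak{u}\cap\mathfrak{k})$, in particular at $t=0$. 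Effectiveness of the Euler characteristic in the weakly fair range is itself the $\mathfrak{u}$-cohomology/bottom-layer argument of \cite{KV95}, which I would either reproduce or invoke.

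For unitarity I would equip each $A_{\mathfrak{q}}(\lambda_t)$ with its invariant Hermitian form, normalized to be positive definite on the bottom layer (which carries the lowest $K$-type $V_{\lambda|_{\mathfrak{t}}+2\rho(\mathfrak{u}\cap\mathfrak{p})}$ with multiplicity one, cf. \cite{VZ84}), and track, for each $\mu \in \widehat{K}$, the signature $(p(\mu,t), q(\mu,t))$ on the $\mu$-isotypic component. Theorem \ref{vanishing}(b) gives $q(\mu,t) = 0$ for $t \gg 0$. As $t$ decreases, $(p(\mu,t),q(\mu,t))$ is constant except at the finitely many $t \ge 0$ at which $\lambda_t + \rho(\mathfrak{u})$ becomes singular for some $\alpha \in \Delta(\mathfrak{u},\mathfrak{h})$, which are the reducibility points of the family. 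At such a $t$ I would read off the Jantzen filtration of $A_{\mathfrak{q}}(\lambda_t)$ from the deformation: its radical is the image of the relevant wall-crossing translation functor, and because $\mathbb{C}_{\lambda}$ is one-dimensional the resulting rank-one computation shows the form changes sign only across a Hermitian-dual pair of composition factors, so the renormalized form on the surviving irreducible quotient (and just below $t$) is again semidefinite. Iterating down to $t=0$ yields that $A_{\mathfrak{q}}(\lambda)$ is unitary or zero.

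The hard step is exactly this last one: controlling the signature of the Jantzen form at a point of reducibility and excluding an ``odd'' sign change. This is the heart of Vogan's signature-character theorem, and it is where one genuinely uses that $\mathbb{C}_\lambda$ is one-dimensional (the analogous statement fails for a general unitary $(\mathfrak{l},L\cap K)$-module in the weakly fair range). For the purposes of this paper it is enough to invoke Theorem 1.3 of \cite{V84}, equivalently the signature theorems of \cite{KV95}, at this point rather than to reprove it.
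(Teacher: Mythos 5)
The paper does not prove this theorem; it is stated as a cited background result (Vogan's deformation/signature theorem, cf.\ \cite{V84} Theorem~1.3 and \cite{KV95}), and the paper immediately moves on to its applications for $U(p,q)$. So there is no ``paper's own proof'' to compare against; the statement is invoked, not reproved. Your sketch is an accurate account of the standard argument, and you correctly identify the crux: the signature-character analysis across reducibility points as $\lambda_t$ is slid out of the good range, which one should simply cite from \cite{V84} rather than reprove, exactly as the paper does.

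Two small things worth tightening if you ever did want to write this out. First, the vanishing in degrees $S \neq \dim(\mathfrak{u}\cap\mathfrak{k})$ for one-dimensional $Z$ in the weakly fair range is not literally Theorem~\ref{vanishing} (that is the weakly good statement); it is a separate theorem in \cite{KV95} whose proof is the effectiveness-of-the-Euler-characteristic argument you gesture at, so it needs its own citation rather than being folded into the deformation. Second, your normalization of the Hermitian form ``to be positive on the bottom layer carrying the lowest $K$-type $V_{\lambda|_{\mathfrak{t}}+2\rho(\mathfrak{u}\cap\mathfrak{p})}$ with multiplicity one'' presupposes that this $K$-type actually occurs; in the weakly fair range $A_{\mathfrak{q}}(\lambda)$ may be zero precisely because it does not, so the normalization must be phrased conditionally (or one works with the signature character of the full standard-module deformation, as in \cite{V84}). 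Neither issue is a gap in principle, but both are places where ``the obvious thing'' needs a sentence of care.
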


In the special case when $G = U(p,q)$, the above theorem is extended to the {\bf mediocre range} \cite{T01}, where the above theorem remains valid. The choice of $\lambda$ satisfying the mediocre range condition is given explicitly \cite[Lemma 3.5(c)]{T01}. Furthermore, it is known that all $A_{\mathfrak{q}}(\lambda)$ in mediocre range are either zero or irreducible \cite[Theorem 3.1(b)]{T01}, and all $A_{\mathfrak{q}}(\lambda)$ in the mediocre range but not in the weakly fair range is isomorphic to $A_{\mathfrak{q}'}(\lambda')$ in the weakly fair range for another $\mathfrak{q}'$ and $\lambda'$ \cite[Theorem 9.1]{T01}. 

\medskip
The infinitesimal character of $A_{\mathfrak{q}}(\lambda)$ module is equal to $\Lambda = \lambda + \rho(\mathfrak{g})$. More explicitly, for $G = U(n,2)$, the coordinates of $\Lambda$ must be of the form: 
$$\begin{cases} \mathbb{Z} + \frac{1}{2} &\text{if}\ n\ \text{is even}\\ \mathbb{Z}  &\text{if}\ n\ \text{is odd}\end{cases}.$$ 
We say $\Lambda \in \mathfrak{h}^*$ is {\bf integral} if its coordinates satisfy the above condition.
The main theorem in this section is to understand the unitary dual of $U(n,2)$ with integral infinitesimal character:
\begin{theorem}[Conjecture 1.1, \cite{T01}] \label{thm-trapa}
    Let $G = U(n,2)$. Suppose $X$ is an irreducible, unitary $(\mathfrak{g},K)$-module having integral infinitesimal character. Then it is an $A_{\mathfrak{q}}(\lambda)$-module in the mediocre range.
\end{theorem}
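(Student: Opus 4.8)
The plan is to reduce Theorem \ref{thm-trapa} to the classification already in hand, namely Corollary \ref{cor-un2}, together with the case-by-case descriptions in Theorems \ref{thm-un1fund}, \ref{thm-ci}, \ref{thm-cii} and \ref{thm-ciii}. So the starting point: let $X$ be irreducible, unitary, with integral infinitesimal character, and write its combinatorial $\theta$-stable datum as $\mathcal{D} = \bigsqcup_{i=1}^k \mathcal{F}_i$ partitioned into fundamental sub-data, with corresponding $\theta$-stable parabolic $\mathfrak{q}' = \mathfrak{l}' + \mathfrak{u}'$ and Levi $L' = \prod_i U(p_i,q_i)$, $\sum q_i = 2$. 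By Corollary \ref{cor-un2} each $\mathcal{F}_i$ is one of the explicitly listed shapes, and by Lemma \ref{lem-good}(b) the module $X$ is always cohomologically induced from $\mathfrak{q}'$ in the fair range. So it suffices to show that each fundamental constituent $\pi_i$ (the $(\mathfrak{l}_i',L_i'\cap K)$-module attached to $\mathcal{F}_i$) with integral infinitesimal character is itself an $A_{\mathfrak{q}}(\lambda)$-module in the mediocre range; then induction in stages (using \cite[Theorem 3.1(b), Theorem 9.1]{T01} and the fact that induction-by-stages of $A_{\mathfrak{q}}(\lambda)$ data in the mediocre/weakly fair range stays in that range, plus Theorem \ref{vanishing} to control vanishing) upgrades this to $X$.

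The bulk of the work is therefore the fundamental case, and here I would go through the list in the classification one item at a time. First, the easy pieces: discrete series, limits of discrete series, and more generally $A_{\mathfrak{b}}(\lambda)$-type modules (cases with only $(1,0)/(0,1)$-blocks, or with a $(1,1)$-parallelogram and $\nu=0$) are literally $A_{\mathfrak{q}}(\lambda)$ in the good or weakly good range, as already indicated in Example \ref{eg-u41}. Next, for the fundamental $U(n,1)$-modules of Theorem \ref{thm-un1fund} with $\nu>0$: one must observe that at integral infinitesimal character the allowed $\nu$-values $0\le\nu\le\min\{a,b\}+1$ (rectangle/trapezoid case) lie in $\mathbb{Z}$ or $\mathbb{Z}+\frac12$ appropriately, and I claim that for each such $\nu$ one can read off a $\theta$-stable $\mathfrak{q}'\supset\mathfrak{q}$ from the datum (by grouping $\nu+1$ consecutive blocks with the rectangle/trapezoid, as in the weakly-good computation in Example \ref{eg-u41}(iii)) so that the inducing module is a unitary character of the Levi; the verification is exactly the ``compute infinitesimal character and lowest $K$-type'' check used repeatedly in Example \ref{eg-u41} and Section \ref{subsec-ii}. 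For the genuinely $U(n,2)$ fundamental cases: cases (ii) and (iii) were already handled in the proofs of Theorems \ref{thm-cii}, \ref{thm-ciii} by exhibiting the modules as cohomologically/parabolically induced from unitary limits-of-discrete-series or characters in the weakly good range, so these are essentially done; one only needs to note the reductions preserve integrality and land in the mediocre range.

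The real content — and the main obstacle — is Case (i), the basic modules $J(\Theta,\Phi;\nu_\Theta,\nu_\Phi)$ of Theorem \ref{thm-ci}, where $\Theta,\Phi$ are integral or half-integral. For the ``triangle'' and ``line'' regions of Theorem \ref{thm-ci}(a) I would argue as follows: a point $(\nu_\Theta,\nu_\Phi)$ of the form where the segments $[e_1,b_1]>[e_2,b_2]>\cdots$ of some partition are strictly separated gives, by Lemma \ref{lem-good}(a), a good-range cohomological induction, and iterating this reduction (peeling off $(p_j,0)$-blocks and shrinking the remaining $U(r,2)$ or $U(r,1)$ piece) one eventually reaches either a $U(\le 1,\le 1)$ situation — a unitary character, hence $A_{\mathfrak q}(\lambda)$ — or a genuinely $2$-in-the-$q$-slot degenerate module $\mathrm{Ind}(J_2(2\Theta,2\Phi;2\nu_\Theta,2\nu_\Phi)\boxtimes\mathrm{triv}\boxtimes 1)$ of the shape \eqref{eq-j2}; for the latter, $J_2$ on $GL(2,\mathbb{C})$ with integral parameters is a unitary character times a Speh-type module, and at the relevant (lattice) points it is itself $A_{\mathfrak{q}_{GL}}(\lambda)$, so the whole induced module is $A_{\mathfrak q}(\lambda)$ on $U(n,2)$ by induction in stages. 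For the isolated exceptional points in Theorem \ref{thm-ci}(b) — the points like $(\tfrac{n-1}{2}-|\Theta|,\tfrac{n+1}{2}-|\Phi|)$ with $\Theta\Phi>0$, the trivial representation, etc. — I would exhibit each one directly: for these $\Lambda=\lambda_a+\nu$ is integral and maximal enough that the partition ``all blocks of one sign on the left, all of the other sign on the right, the block(s) of content $\Theta=\Phi$ in between'' yields a $\mathfrak{q}'$ with $\mathfrak{l}'_0 \cong \mathfrak{u}(n,1)+\mathfrak{u}(0,1)$ or $\mathfrak{u}(a,0)+\mathfrak{u}(b,0)+\mathfrak{u}(c,2)$ such that the inducing datum is a unitary character (the case $\mathfrak{l}_0 = \mathfrak{u}(n,1)+\mathfrak{u}(0,1)$ is precisely the one flagged in Section \ref{subsec-ii} of the introduction, and is fair- not weakly-good-range, which is why ``mediocre range'' rather than ``weakly fair range'' appears in the statement); concretely one checks via \cite[Lemma 3.5(c)]{T01} that $\lambda$ can be chosen in the mediocre range. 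The delicate point throughout is keeping track of which ``range'' one is in — good vs. weakly good vs. fair vs. weakly fair vs. mediocre — since a handful of points (the Case (i) line-endpoints, the $\mathfrak{u}(n,1)+\mathfrak{u}(0,1)$-induced points, and possibly the trivial representation realized on a maximal $\mathfrak{q}$) fall only in the mediocre range and rely essentially on \cite[Theorems 3.1, 9.1]{T01} to be nonzero, irreducible, and unitary. I expect this bookkeeping, rather than any single hard lemma, to be where the care is needed; once it is done, assembling the constituents by induction in stages is routine.
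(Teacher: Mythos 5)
Your high-level skeleton matches the paper's proof: reduce to fundamental modules via Lemma \ref{lem-good}(b), use Lemma \ref{lem-good}(a) to further discard any fundamental module that is cohomologically induced from a proper $\mathfrak{q}'\supset\mathfrak{q}$ in the weakly good range, and then exhibit an explicit $\theta$-stable $\mathfrak{q}$ and $\lambda$ for each remaining module. But there are two genuine gaps once you get to the residual list.

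First, your argument for the boundary line points of Theorem \ref{thm-ci}(a) does not work. You claim that $\mathrm{Ind}^G_{M''A''N''}\bigl(J_2(2\Theta,2\Phi;2\nu_\Theta,2\nu_\Phi)\boxtimes\mathrm{triv}\boxtimes 1\bigr)$ with $M''A''=GL(2,\mathbb{C})\times U(n-2,0)$ becomes an $A_{\mathfrak{q}}(\lambda)$-module ``by induction in stages'' because $J_2$ is $A_{\mathfrak{q}_{GL}}(\lambda)$. But that $M''A''$ is the Levi of a \emph{real} parabolic, not a $\theta$-stable one, and real parabolic induction cannot be ``reassembled'' with cohomological induction to produce an $A_{\mathfrak{q}}(\lambda)$-module. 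Realizing the module as a degenerate series is what proved unitarity in the proof of Theorem \ref{thm-ci}(a); it says nothing a priori about being $A_{\mathfrak{q}}(\lambda)$. The paper instead identifies these modules, in Proposition \ref{prop-53}, with $A_{\mathfrak{q}}(\lambda)$ for the $\theta$-stable $\mathfrak{l}_0=\mathfrak{u}(n-1,1)+\mathfrak{u}(0,1)+\mathfrak{u}(1,0)$, and the proof is not automatic: it needs a uniqueness statement (one module with a given lowest $K$-type and infinitesimal character, which fails in general, cf. Example \ref{eg-coind}) and a Blattner-formula computation to check the lowest $K$-type. This explicit cohomological computation is the main technical content of the paper's proof, and your proposal skips it.

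Second, the list of fully supported fundamental unitary modules is longer than the two families you name (trivial module, and the isolated basic points with $\mathfrak{l}_0 = \mathfrak{u}(n,1)+\mathfrak{u}(0,1)$). You are missing: the corner points $J\bigl(\Theta,\Phi;\frac{n-1}{2}-\Theta,\frac{n-1}{2}+\Phi\bigr)$ with $\Theta\ge 0\ge\Phi=\Theta-\frac{n-2}{2}$, which need $\mathfrak{l}_0=\mathfrak{u}(n-2\Theta-1,1)+\mathfrak{u}(n+2\Phi-1,1)$; the endpoint $P_a$-modules $P_1(\tfrac{n-1}{2},\tfrac{n-3}{2};\tfrac{n}{2})$ and $P_2(\tfrac{n-2}{2},\tfrac{n-4}{2};\tfrac{n}{2})$ from Sub-case IV, which are not weakly-good induced and need $\mathfrak{l}_0=\mathfrak{u}(n,1)+\mathfrak{u}(0,1)$; and the $U(2,2)$ parallelogram modules with $\mathfrak{l}_0 = \mathfrak{u}(2,1)+\mathfrak{u}(0,1)$ or $\mathfrak{u}(1,2)+\mathfrak{u}(1,0)$. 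Saying that Cases (ii) and (iii) ``were already handled'' in Theorems \ref{thm-cii} and \ref{thm-ciii} is not accurate for these endpoint modules: those theorems establish unitarity, but the realization used there is real parabolic induction from a limit of discrete series (or the standard module at an endpoint), not a unitary character of a $\theta$-stable Levi, so the $A_{\mathfrak{q}}(\lambda)$ claim still needs to be made and checked separately. Each of these items requires the same kind of explicit lowest-$K$-type/Blattner verification that you omit.
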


For the rest of this section, we will determine which of the unitary modules classified in Section \ref{sec-un1} and Section \ref{sec-un2} are $A_{\mathfrak{q}}(\lambda)$ modules. There are two main reduction steps in the verification of Theorem \ref{thm-trapa}. Firstly, by Lemma \ref{lem-good}(b), if one can prove the above theorem holds for fundamental modules $X$, then the general result follows from induction in stages.
Secondly, one can further reduce to studying fundamental unitary modules that are not cohomologically induced from any $\mathfrak{q}' \supset \mathfrak{q}$ in the weakly good range, which can be easily determined by Lemma \ref{lem-good}(a). For instance, in the basic case (Section \ref{subsec-i}), all the integral points in the interior of the fundamental rectangle are cohomologically induced from a proper $\theta$-stable Levi subalgebra in the weakly good range, and they can be omitted in the proof of the theorem.

\medskip
Consequently, one only needs to study the following cases:
\begin{enumerate}

\item The only case for $U(n,1)$ is the trivial module (cf. the last paragraph of Example \ref{eg-u41}, corresponding to the data:
\begin{center}
\begin{tikzpicture}
\draw
    (-0.3,0) 
 -- (0.0,0) node {\small $0$\ $0$}
 -- (0.3, 0) 
-- (0.15,-0.5) 
-- (0.0,-0.5) node {\small $0$}
-- (-0.15,-0.5) 
 -- cycle;

\draw
    (0.7,0) 
 -- (0.825,0) node {\small $-1$}
-- (0.95,0) 
-- (0.875,-0.5) 
-- (0.825,-0.5) 
-- (0.775,-0.5) 
 -- cycle;

\draw
    (-0.7,0) 
 -- (-0.825,0) node {\small $1$}
-- (-0.95,0) 
-- (-0.875,-0.5) 
-- (-0.825,-0.5) 
-- (-0.775,-0.5) 
 -- cycle;

\draw (-1.25,-0.25) node {$\dots$};
\draw (1.35,-0.25) node {$\dots$};

\draw
    (1.7,0) 
 -- (1.825,0) node {\small $-\frac{n-2}{2}$}
-- (1.95,0) 
-- (1.875,-0.5) 
-- (1.825,-0.5) 
-- (1.775,-0.5) 
 -- cycle;

\draw
    (-1.7,0) 
 -- (-1.825,0) node {\small $\frac{n-2}{2}$}
-- (-1.95,0) 
-- (-1.875,-0.5) 
-- (-1.825,-0.5) 
-- (-1.775,-0.5) 
 -- cycle;

\draw[arrows = {-Stealth[]}]          (-0.3,0)   to [out=90,in=90]node[above]{$\frac{n}{2}$} (-2.3,0);
\draw[arrows = {-Stealth[]}]          (0.3,0)   to [out=90,in=90]node[above]{$-\frac{n}{2}$} (2.3,0);
 \end{tikzpicture} ($n$ even) \quad ,\quad 
 \begin{tikzpicture}
\draw
    (-0.15,0) 
 -- (0.0,0) node {\scriptsize $0$}
 -- (0.15, 0) 
-- (0.15,-0.5) 
-- (0.0,-0.5) node {\scriptsize $0$}
-- (-0.15,-0.5) 
 -- cycle;

\draw
    (0.7,0) 
 -- (0.825,0) node {\scriptsize $\frac{-1}{2}$}
-- (0.95,0) 
-- (0.875,-0.5) 
-- (0.825,-0.5) 
-- (0.775,-0.5) 
 -- cycle;

\draw
    (-0.7,0) 
 -- (-0.825,0) node {\scriptsize $\frac{1}{2}$}
-- (-0.95,0) 
-- (-0.875,-0.5) 
-- (-0.825,-0.5) 
-- (-0.775,-0.5) 
 -- cycle;

\draw (-1.25,-0.25) node {$\dots$};
\draw (1.35,-0.25) node {$\dots$};

\draw
    (1.7,0) 
 -- (1.825,0) node {\scriptsize $-\frac{n-2}{2}$}
-- (1.95,0) 
-- (1.875,-0.5) 
-- (1.825,-0.5) 
-- (1.775,-0.5) 
 -- cycle;

\draw
    (-1.7,0) 
 -- (-1.825,0) node {\scriptsize $\frac{n-2}{2}$}
-- (-1.95,0) 
-- (-1.875,-0.5) 
-- (-1.825,-0.5) 
-- (-1.775,-0.5) 
 -- cycle;

\draw[arrows = {-Stealth[]}]          (-0.15,0)   to [out=90,in=90]node[above]{$\frac{n}{2}$} (-2.5,0);
\draw[arrows = {-Stealth[]}]          (0.15,0)   to [out=90,in=90]node[above]{$\frac{-n}{2}$} (2.5,0);
 \end{tikzpicture} ($n$ odd)
\end{center}

\item For $U(n,2)$, the basic modules 
\begin{center}
$J\left(0,\Phi; \frac{n-1}{2},\frac{n-1}{2}+\Phi-k\right),$ \quad where $\Phi \leq 0$, $k > 0$ and $\frac{n-1}{2}+\Phi-k \geq 0$.
\end{center}
They are the intersection points of the `lines' in Theorem \ref{thm-cii}(a) with the boundary of the fundamental rectangle.
For instance, 
for $n=9$, $J(0,-2;4,l)$ ($l = 0,1$) corresponds to the datum:
\begin{center}
\begin{tikzpicture}
\draw
    (-0.025,0) node {\small $0$} 
 -- (-0.25,0) 
 -- (-0.475, 0) node {\small $0$}
-- (-0.35,-0.5) 
-- (-0.25,-0.5) node {\small $0$}
-- (-0.15,-0.5) 
 -- cycle;

\draw
    (-0.7,0) 
 -- (-0.825,0) node {\small $1$}
-- (-0.95,0) 
-- (-0.875,-0.5) 
-- (-0.825,-0.5) 
-- (-0.775,-0.5) 
 -- cycle;

\draw
    (0.2,0) 
 -- (0.325,0) node {\small $-1$}
-- (0.45,0) 
-- (0.375,-0.5) 
-- (0.325,-0.5) 
-- (0.275,-0.5) 
 -- cycle;

\draw
    (0.7,0) node {\small $-2$}
 -- (0.95,0) 
-- (1.2,0) node {\small $-2$}
-- (1.05,-0.5) 
-- (0.95,-0.5) node {\small $-2$}
-- (0.85,-0.5) 
 -- cycle;

\draw
    (-1.225,0) 
 -- (-1.35,0) node {\small $2$}
-- (-1.475,0) 
-- (-1.4,-0.5) 
-- (-1.35,-0.5) 
-- (-1.3,-0.5) 
 -- cycle;

\draw
    (-1.7,0) 
 -- (-1.825,0) node {\small $3$}
-- (-1.95,0) 
-- (-1.875,-0.5) 
-- (-1.825,-0.5) 
-- (-1.775,-0.5) 
 -- cycle;

\draw
    (1.5,0) 
 -- (1.625,0) node {\small $-3$}
-- (1.75,0) 
-- (1.675,-0.5) 
-- (1.625,-0.5) 
-- (1.575,-0.5) 
 -- cycle;

  \draw[arrows = {-Stealth[]}]          (-0.5,0)   to [out=90,in=0]node[above]{\scriptsize $4$} (-2.4,0.5);
\draw[arrows = {-Stealth[]}]          (0,0)   to [out=90,in=180]node[above]{\scriptsize $-4$} (2,0.5);

  \draw[arrows = {-Stealth[]}]          (0.85,-0.5)   to [out=-90,in=0]node[below]{\tiny $l$} (-0.2,-0.8);
\draw[arrows = {-Stealth[]}]          (1.05,-0.5)   to [out=-90,in=180]node[below]{\tiny $-l$} (2.1,-0.8);
\end{tikzpicture}.
\end{center}
\begin{proposition} \label{prop-53}
    Let $X = J\left(0,\Phi; \frac{n-1}{2},\ell \right)$ be a basic module such that 
    \begin{itemize}
        \item $\Phi \leq 0$;
        \item $0 \leq \ell < \frac{n-1}{2} + \Phi$;
        \item the infinitesimal character of $X$ is integral. 
    \end{itemize} 
    Then $X$ is isomorphic to an $A_{\mathfrak{q}}(\lambda)$ module in the mediocre range, with $\mathfrak{l}_0 = \mathfrak{u}(n-1,1) + \mathfrak{u}(0,1) + \mathfrak{u}(1,0)$.
\end{proposition}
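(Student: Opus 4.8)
The plan is to realize $X$ via Method~I of Section~\ref{sec-irrep} as $\mathcal{R}_{\mathfrak{q}}(\mathbb{C}_\lambda)$ for an explicit $\theta$-stable parabolic subalgebra $\mathfrak{q} = \mathfrak{l} + \mathfrak{u}$ with $\mathfrak{l}_0 = \mathfrak{u}(n-1,1) + \mathfrak{u}(0,1) + \mathfrak{u}(1,0)$ and an explicit unitary character $\mathbb{C}_\lambda$, and then to check that $\lambda$ lies in the mediocre range. First, integrality of the infinitesimal character forces $n$ to be odd and $\Phi$ to be an integer in the chain $\{\tfrac{n-3}{2},\dots,-\tfrac{n-3}{2}\}$, so the $\theta$-stable datum of $X$ consists of $(1,0)$-blocks at the chain values other than $0$ and $\Phi$, a content-$0$ $(2,1)$-trapezoid carrying $\nu=\tfrac{n-1}{2}$, and a content-$\Phi$ $(2,1)$-trapezoid carrying $\nu=\ell$. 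Reading off from the datum (Example~\ref{eg-lambdaa2}) gives the infinitesimal character $\Lambda = \{-\tfrac{n-1}{2},\dots,\tfrac{n-1}{2}\}\sqcup\{\Phi-\ell\}\sqcup\{\Phi+\ell\}$ (as multisets) and a lowest $K$-type $\delta$ via Example~\ref{eg-lambdaa}.

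The key reduction exploits the hypothesis $\ell<\tfrac{n-1}{2}+\Phi$: it says exactly that $X$ lies on the reducibility line $\nu_\Theta-\nu_\Phi=\Theta-\Phi+k$ of Theorem~\ref{thm-ci}(a) with $k=\tfrac{n-1}{2}+\Phi-\ell\in\mathbb{N}_+$, at the point where that line meets the top edge of the fundamental rectangle. Hence, by \eqref{eq-j2} in the proof of Theorem~\ref{thm-ci}(a), $X=\mathrm{Ind}_{GL(2,\mathbb{C})\times U(n-2,0)}^{G}\left(J_2(0,2\Phi;n-1,2\ell)\boxtimes\mathrm{triv}\boxtimes 1\right)$. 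Peeling off the $GL(1,\mathbb{C})$-factor that carries $J_1(0;n-1)$ and re-inducing in stages, $X$ becomes the Langlands quotient of $\mathrm{Ind}_{GL(1,\mathbb{C})\times U(n-1,1)}^{G}\left(J_1(2\Phi;2\ell)\boxtimes\Sigma\boxtimes 1\right)$, where $\Sigma$ is the $U(n-1,1)$-module whose $\theta$-stable datum is the chain together with the content-$0$ $(2,1)$-trapezoid at $\nu=\tfrac{n-1}{2}$; since $n-1$ is even and $\tfrac{n-1}{2}$ is the maximal $\nu$ permitted for that trapezoid, Theorem~\ref{thm-un1fund} (cf. the trivial-module case in Example~\ref{eg-u41}) identifies $\Sigma=\mathrm{triv}_{U(n-1,1)}$.

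It remains to turn this into $\mathcal{R}_{\mathfrak{q}}(\mathbb{C}_\lambda)$ with $\mathfrak{l}_0$ as claimed. Writing $\mathrm{triv}_{U(n-1,1)}$ as cohomologically induced from a unitary character of the $\theta$-stable parabolic $\mathfrak{r}\subset\mathfrak{u}(n-1,1)$ with Levi $\mathfrak{u}(n-1,0)+\mathfrak{u}(0,1)$, taking $\mathfrak{q}$ to be the $\theta$-stable parabolic of $\mathfrak{u}(n,2)$ whose Levi places the $\mathfrak{u}(n-1,1)$-part at height $0$ and the two rank-one factors at heights $\Phi+\ell$ and $\Phi-\ell$ (the chambers chosen so that $\mathfrak{u}\cap\mathfrak{p}$ reproduces $\delta$), and letting $\lambda$ be the trivial character on the $\mathfrak{u}(n-1,1)$-factor together with the scalars $\Phi+\ell$ and $\Phi-\ell$ on the two $\mathfrak{u}(1)$'s, the infinitesimal character of $\mathbb{C}_\lambda$ is $\Lambda$ by construction. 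One then verifies that $\lambda$ is in the mediocre range via the explicit criterion of \cite[Lemma~3.5(c)]{T01}, where the inequalities reduce to $\Phi\le 0$, $\ell\ge 0$ and $\ell<\tfrac{n-1}{2}+\Phi$. By \cite[Theorem~3.1(b)]{T01} the module $A_{\mathfrak{q}}(\lambda)$ is then irreducible and nonzero (its lowest $K$-type $\lambda|_{\mathfrak{t}}+2\rho(\mathfrak{u}\cap\mathfrak{p})$ surviving), hence unitary, and matching this lowest $K$-type with $\delta$ -- equivalently, matching combinatorial $\theta$-stable data -- identifies $A_{\mathfrak{q}}(\lambda)$ with $X$.

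I expect the main obstacle to be this last rewriting-and-identification step: the inducing $GL(1,\mathbb{C})$-factor genuinely produces an irreducible principal series of $U(1,1)$, not a character, so passing from $\mathrm{Ind}_{GL(1,\mathbb{C})\times U(n-1,1)}^{G}\left(J_1(2\Phi;2\ell)\boxtimes\mathrm{triv}_{U(n-1,1)}\right)$ to $\mathcal{R}_{\mathfrak{q}}(\mathbb{C}_\lambda)$ requires a genuine change of polarization -- a nontrivial identity between cohomologically induced modules, visible already inside $U(2,1)$ -- together with careful bookkeeping of the mediocre-range inequalities of \cite{T01}; this is precisely where the hypothesis $\ell<\tfrac{n-1}{2}+\Phi$ must be invoked with care.
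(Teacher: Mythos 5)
Your route is genuinely different from the paper's, but it contains the gap you flag yourself, and that gap is not a small afterthought — it is the whole content of the proposition. You begin correctly: locating $X$ on the reducibility line of Theorem~\ref{thm-ci}(a) and using the identity \eqref{eq-j2} to write $X = \mathrm{Ind}_{GL(2,\mathbb{C})\times U(n-2,0)}^{G}(J_2(0,2\Phi;n-1,2\ell)\boxtimes\mathrm{triv}\boxtimes 1)$ is fine, and identifying the $U(n-1,1)$-module $\Sigma$ obtained after peeling off $J_1(0;n-1)$ with the trivial module is also fine. But your final step — replacing the real-parabolic induction $\mathrm{Ind}_{GL(1,\mathbb{C})\times U(n-1,1)}^{G}(J_1(2\Phi;2\ell)\boxtimes\mathrm{triv}_{U(n-1,1)})$ by cohomological induction $\mathcal{R}_{\mathfrak{q}}(\mathbb{C}_\lambda)$ from the $\theta$-stable Levi $\mathfrak{u}(n-1,1)+\mathfrak{u}(0,1)+\mathfrak{u}(1,0)$ — is a genuine change of polarization with no formal identity behind it. You write that "this is precisely where the hypothesis $\ell<\frac{n-1}{2}+\Phi$ must be invoked with care", which is exactly right, but you have not supplied the argument. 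In its absence the proposal reduces the proposition to an unproved assertion.

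The paper avoids the change-of-polarization issue entirely. It first argues, from the shape of the $\theta$-stable datum, that there is a \emph{unique} irreducible module with infinitesimal character $\Lambda = (\frac{n-1}{2},\dots,-\frac{n-1}{2},\Phi+\ell,\Phi-\ell)$ and lowest $K$-type $\kappa = V_{(0,\dots,0|0,2\Phi)}$ (and points out via Example~\ref{eg-coind} that such uniqueness is not automatic, so this step does need a check). Given uniqueness, it suffices to show that the candidate $A_{\mathfrak{q}}(\lambda)$ with $\mathfrak{l}_0=\mathfrak{u}(n-1,1)+\mathfrak{u}(0,1)+\mathfrak{u}(1,0)$ has these two invariants. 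For this the paper writes down an explicit $\lambda$, computes $S^\bullet(\mathfrak{u}\cap\mathfrak{p})\otimes\mathbb{C}_{\lambda+2\rho(\mathfrak{u}\cap\mathfrak{p})}$, and applies the Blattner formula in degree $S=\dim(\mathfrak{u}\cap\mathfrak{k})$, showing $[A_\mathfrak{q}(\lambda):V_\kappa]=1$ while no $K$-type of smaller norm occurs. That $K$-type bookkeeping is what your proposal lacks, and it is the essential step.

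One smaller inaccuracy: you claim integrality of the infinitesimal character forces $n$ odd and $\Phi\in\mathbb{Z}$. In fact for $n$ even the trapezoid content $\Phi$ automatically lies in $\mathbb{Z}+\frac{1}{2}$, and the infinitesimal character is integral precisely when $\ell\in\mathbb{Z}$, so the even case is not excluded by the hypotheses. The paper restricts to $\Phi\in\mathbb{Z}$ (equivalently $n$ odd) as a stated simplification, not as a consequence of the assumptions, and you should present it that way as well.
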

\begin{proof}
We will only prove the case when $\Phi \in \mathbb{Z}$ is a non-positive integer. In such a case, the infinitesimal character of $X$ is 
$$\Lambda := \left(\frac{n-1}{2}, \frac{n-3}{2}, \dots, -\frac{n-3}{2}, -\frac{n-1}{2}, \Phi + \ell, \Phi - \ell\right)$$ 
and lowest $K$-type $\kappa := V_{(0,\dots,0 | 0, 2\Phi)}$. By looking at the structure of the $\theta$-stable datum, it is easy to check that there is a unique module having lowest $K$-type $\kappa$ and infinitesimal character $\Lambda$ (this is not true in general though - see Example \ref{eg-coind} below). Therefore, one only needs to check that the $A_{\mathfrak{q}}(\lambda)$ module in the proposition has the correct lowest $K$-type and infinitesimal character.

\medskip

Now consider the $\theta$-stable parabolic subalgebra given in the proposition. One has:
\begin{align*}
    \rho(\mathfrak{g}) &= \left(\frac{n+1}{2}, \frac{n-1}{2}, \dots, -\frac{n-5}{2}, -\frac{n+1}{2}\ |\ -\frac{n-3}{2}, -\frac{n-1}{2}\right) \\
    2\rho(\mathfrak{u} \cap \mathfrak{p}) &= (1, \dots, 1, -2\ |\ 1, -n+2) \\
    S^{\bullet}(\mathfrak{u} \cap \mathfrak{p}) &= \bigoplus_{a,b,c \geq 0} V^L_{(a,0,\dots,0; -b-c\ |\ b; c-a)}
\end{align*}
where $V^L_{\mu}$ is an $(L \cap K) \cong U(n-1) \times U(1) \times U(1) \times U(1)$-module. Take 
$$\mathbb{C}_{\lambda} := V^L_{(-1,\dots,-1; \Phi - \ell + \frac{n+1}{2}\ |\ -1; \Phi + \ell + \frac{n-1}{2})},$$
so that its infinitesimal character $\lambda + \rho(\mathfrak{g})$ is equal to $\Lambda$ (up to permutation of coordinates), and
\begin{equation} \label{eq-sup}
    S^{\bullet}(\mathfrak{u} \cap \mathfrak{p}) \otimes \mathbb{C}_{\lambda + 2\rho(\mathfrak{u} \cap \mathfrak{p})} = \bigoplus_{a,b,c \geq 0} V^L_{(a,0,\dots,0; \Phi - \ell + \frac{n-3}{2}-b-c\ |\ b; \Phi + \ell - \frac{n-3}{2}+c-a)}
\end{equation}
Recall the Blattner formula for weakly fair range $A_{\mathfrak{q}}(\lambda)$ modules (cf. \cite[Equation 5.108(a)]{KV95}), which works for mediocre range modules since cohomological induction  also occur in degree $S = \dim(\mathfrak{u} \cap \mathfrak{k})$ only:
\begin{equation} \label{eq-blattner}
\begin{aligned}
    \left[A_{\mathfrak{q}}(\lambda): V_{\kappa}\right] 
    = &\sum_{i=0}^S (-1)^i \dim\left(\mathrm{Hom}_{L \cap K}\left(S^{\bullet}(\mathfrak{u} \cap \mathfrak{p}) \otimes \mathbb{C}_{\lambda + 2\rho(\mathfrak{u})}, H_i(\mathfrak{u} \cap \mathfrak{k}, V_{\kappa}) \otimes V^L_{2\rho(\mathfrak{u}\cap \mathfrak{k})}\right)\right)\\
    = &\sum_{i=0}^S (-1)^i \dim\left(\mathrm{Hom}_{L \cap K}\left(S^{\bullet}(\mathfrak{u} \cap \mathfrak{p}) \otimes \mathbb{C}_{\lambda + 2\rho(\mathfrak{u} \cap \mathfrak{p})}, H_i(\mathfrak{u} \cap \mathfrak{k}, V_{\kappa}) \right)\right)
    \end{aligned}
\end{equation}
    where the first equality comes from the Poincar\'e duality (\cite[Corollary 3.13 and Corollary 5.72]{KV95}). 
    
    \smallskip
    The right-hand side of \eqref{eq-blattner} can be computed by Kostant's theorem on Lie algebra cohomology. 
    More explicitly, when $i =0$, 
    $H_i(\mathfrak{u} \cap \mathfrak{k}, V_{\kappa}) = V^L_{(0,\dots,0; 0 | 0; 2\Phi)}$. On the other hand, one can take $a = b = 0$ and $c = \Phi - \ell + \frac{n-3}{2}$ in \eqref{eq-sup} so that it contributes a nonzero term in \eqref{eq-blattner}. Moreover, it is easy to see that when $i > 0$, the $(L \cap K)$-types appearing in $H_i(\mathfrak{u} \cap \mathfrak{k}, V_{\kappa})$ must be of the form
    $V^L_{(\xi_1 \dots, \xi_{n-1}; \zeta | \theta_1; \theta_2)}$ with some $\xi_r < 0$ or $\theta_1 < 0$, which does not appear in \eqref{eq-sup}. Therefore, one can conclude that $[A_{\mathfrak{q}}(\lambda): V_{\kappa}] =1$. 
    
    Finally, it is not hard to check that there are no other $K$-types in $A_{\mathfrak{q}}(\lambda)$ having a smaller norm than $V_{\kappa}$. As a consequence, $A_{\mathfrak{q}}(\lambda)$ has infinitesimal character $\Lambda$ and lowest $K$-type $V_{\kappa}$. By our above discussions, one concludes that $X \cong A_{\mathfrak{q}}(\lambda)$.
\end{proof}

\begin{example} \label{eg-coind}
Let $G = U(4,3)$, and $X_1$ and $X_2$ be the irreducible $(\mathfrak{g},K)$-modules corresponding to the following data:
\begin{center}
\begin{tikzpicture}
\draw
    (-0.3,0) 
 -- (0.0,0) node {\small $0$\ $0$}
 -- (0.3, 0) 
-- (0.15,-0.5) 
-- (0.0,-0.5) node {\small $0$}
-- (-0.15,-0.5) 
 -- cycle;

  \draw[arrows = {-Stealth[]}]          (-0.3,0)   to [out=90,in=0]node[above]{\scriptsize $2$} (-1.1,0.5);
\draw[arrows = {-Stealth[]}]          (0.3,0)   to [out=90,in=180]node[above]{\scriptsize $-2$} (1.1,0.5);

\draw
    (0.7,0) 
 -- (0.825,0) node {\small $-\frac{1}{2}$}
-- (0.95,0) 
-- (0.95,-0.5) 
-- (0.825,-0.5)  node {\small $-\frac{1}{2}$}
-- (0.7,-0.5) 
 -- cycle;

  \draw[arrows = {-Stealth[]}]          (0.7,-0.5)   to [out=-90,in=0]node[below]{\scriptsize $\frac{1}{2}$} (0.2,-0.8);
\draw[arrows = {-Stealth[]}]          (0.95,-0.5)   to [out=-90,in=180]node[below]{\scriptsize $\frac{-1}{2}$} (1.45,-0.8);

\draw
    (-0.7,0) 
 -- (-0.825,0) node {\small $\frac{1}{2}$}
-- (-0.95,0) 
-- (-0.95,-0.5) 
-- (-0.825,-0.5) node {\small $\frac{1}{2}$}
-- (-0.7,-0.5) 
 -- cycle;

  \draw[arrows = {-Stealth[]}]          (-0.7,-0.5)   to [out=-90,in=0]node[below]{\scriptsize $\frac{-1}{2}$} (-0.2,-0.8);
\draw[arrows = {-Stealth[]}]          (-0.95,-0.5)   to [out=-90,in=180]node[below]{\scriptsize $\frac{1}{2}$} (-1.45,-0.8); 
\end{tikzpicture}
\quad and \quad
\begin{tikzpicture}
\draw
    (-0.3,0) 
 -- (0.0,0) node {\small $0$\ $0$}
 -- (0.3, 0) 
-- (0.15,-0.5) 
-- (0.0,-0.5) node {\small $0$}
-- (-0.15,-0.5) 
 -- cycle;

  \draw[arrows = {-Stealth[]}]          (-0.3,0)   to [out=90,in=0]node[above]{\scriptsize $0$} (-0.6,0.5);
\draw[arrows = {-Stealth[]}]          (0.3,0)   to [out=90,in=180]node[above]{\scriptsize $0$} (0.6,0.5);

\draw
    (0.7,0) 
 -- (0.825,0) node {\small $-\frac{1}{2}$}
-- (0.95,0) 
-- (0.95,-0.5) 
-- (0.825,-0.5)  node {\small $-\frac{1}{2}$}
-- (0.7,-0.5) 
 -- cycle;

  \draw[arrows = {-Stealth[]}]          (0.7,-0.5)   to [out=-90,in=0]node[below]{\scriptsize $\frac{3}{2}$} (0.2,-0.8);
\draw[arrows = {-Stealth[]}]          (0.95,-0.5)   to [out=-90,in=180]node[below]{\scriptsize $\frac{-3}{2}$} (1.45,-0.8);

\draw
    (-0.7,0) 
 -- (-0.825,0) node {\small $\frac{1}{2}$}
-- (-0.95,0) 
-- (-0.95,-0.5) 
-- (-0.825,-0.5) node {\small $\frac{1}{2}$}
-- (-0.7,-0.5) 
 -- cycle;

  \draw[arrows = {-Stealth[]}]          (-0.7,-0.5)   to [out=-90,in=0]node[below]{\scriptsize $\frac{-3}{2}$} (-0.2,-0.8);
\draw[arrows = {-Stealth[]}]          (-0.95,-0.5)   to [out=-90,in=180]node[below]{\scriptsize $\frac{3}{2}$} (-1.45,-0.8); 
\end{tikzpicture}
\end{center}
respectively. Then $X_1$ is not isomorphic to $X_2$, yet they have the same lowest $K$-type $V_{(0,0,0,0|1,0,-1)}$ and infinitesimal character $\Lambda = (2,1,0,0,0,-1,-2)$. 

The above modules can also be realized using \texttt{atlas}:
\begin{verbatim}
atlas> set G = U(4,3)
atlas> set x1 = parameter(G,603,[3,1,0,0,0,-1,-3],[4,1,-1,0,1,-1,-4]/2)
atlas> set x2 = parameter(G,640,[3,2,0,0,0,-2,-3],[3,3,0,0,0,-3,-3]/2)
\end{verbatim}
In particular, the last entry of the above input are precisely the $\nu$-values of the modules. Now one can check if they have the same infinitesimal character and lowest $K$-type:
\begin{verbatim}
atlas> LKT(x1) = LKT(x2)
Value: true
atlas> infinitesimal_character(x1) = infinitesimal_character(x2)
Value: true
\end{verbatim}
And finally:
\begin{verbatim}
atlas> x1 = x2
Value: false
\end{verbatim}
\end{example}

\item For $U(n,2)$,  the basic modules
\begin{center}
$J\left(\Theta,\Phi; \frac{n-1}{2}-\Theta,\frac{n-1}{2}+\Phi\right),$\quad where $\Theta \geq 0 \geq \Phi = \Theta - \frac{n-2}{2}$.
\end{center}
They are the intersection points of the edge of the first `triangle' in Theorem \ref{thm-cii}(a) with the corner of the fundamental rectangle. For instance, for $n = 7$,
$J(1,\frac{-3}{2}; 2,\frac{3}{2})$
corresponds to the datum:
\begin{center}
\begin{tikzpicture}
\draw
    (-0.025,0) node {\small $1$} 
 -- (-0.25,0) 
 -- (-0.475, 0) node {\small $1$}
-- (-0.35,-0.5) 
-- (-0.25,-0.5) node {\small $1$}
-- (-0.15,-0.5) 
 -- cycle;

\draw
    (-0.7,0) 
 -- (-0.825,0) node {\small $2$}
-- (-0.95,0) 
-- (-0.875,-0.5) 
-- (-0.825,-0.5) 
-- (-0.775,-0.5) 
 -- cycle;

\draw
    (0.2,0) 
 -- (0.325,0) node {\small $0$}
-- (0.45,0) 
-- (0.375,-0.5) 
-- (0.325,-0.5) 
-- (0.275,-0.5) 
 -- cycle;

\draw
    (0.6,0) 
 -- (0.725,0) node {\small $-1$}
-- (0.85,0) 
-- (0.775,-0.5) 
-- (0.725,-0.5) 
-- (0.675,-0.5) 
 -- cycle;

\draw
    (1.05,0) 
 -- (1.2,0) node {\small $\frac{-3}{2}$}
-- (1.35,0) 
-- (1.35,-0.5) 
-- (1.2,-0.5) node {\small $\frac{-3}{2}$}
-- (1.05,-0.5) 
 -- cycle;

\draw
    (1.5,0) 
 -- (1.625,0) node {\small $-2$}
-- (1.75,0) 
-- (1.675,-0.5) 
-- (1.625,-0.5) 
-- (1.575,-0.5) 
 -- cycle;

  \draw[arrows = {-Stealth[]}]          (-0.5,0)   to [out=90,in=0]node[above]{\scriptsize $2$} (-1.1,0.5);
\draw[arrows = {-Stealth[]}]          (0,0)   to [out=90,in=180]node[above]{\scriptsize $-2$} (0.6,0.5);

  \draw[arrows = {-Stealth[]}]          (1.05,-0.5)   to [out=-90,in=0]node[below]{\scriptsize $\frac{3}{2}$} (0.4,-0.8);
\draw[arrows = {-Stealth[]}]          (1.35,-0.5)   to [out=-90,in=180]node[below]{\scriptsize $\frac{-3}{2}$} (2,-0.8);
\end{tikzpicture}. 
\end{center}

As in Proposition \ref{prop-53}, one has:
\begin{proposition}
The unitary modules in (3) are all $A_{\mathfrak{q}}(\lambda)$-modules in the fair range, with $\mathfrak{l}_0 = \mathfrak{u}(n-2\Theta-1,1) + \mathfrak{u}(n+2\Phi-1,1)$.
\end{proposition}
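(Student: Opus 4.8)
The plan is to run the same argument as in the proof of Proposition \ref{prop-53}. Write $X = J(\Theta,\Phi;\tfrac{n-1}{2}-\Theta,\tfrac{n-1}{2}+\Phi)$ with $\Phi = \Theta-\tfrac{n-2}{2}$, and set $\nu_\Theta := \tfrac{n-1}{2}-\Theta$, $\nu_\Phi := \tfrac{n-1}{2}+\Phi$, so that $2\nu_\Theta = n-2\Theta-1$ and $2\nu_\Phi = n+2\Phi-1$. Depending on the parities of $n$, $\Theta$ and $\Phi$, the $\Theta$- and $\Phi$-blocks of the combinatorial $\theta$-stable datum are $(2,1)$-trapezoids or $(1,1)$-rectangles (never parallelograms), giving a few sub-cases that I would treat uniformly as in Proposition \ref{prop-53}. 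Using $\Theta+\nu_\Theta = \tfrac{n-1}{2}$, $\Phi-\nu_\Phi = -\tfrac{n-1}{2}$ and $2\Phi + \tfrac{n-1}{2} = 2\Theta - \tfrac{n-3}{2}$, one reads off the infinitesimal character of $X$ directly from the datum:
\[
\Lambda = \left(\tfrac{n-1}{2},\tfrac{n-3}{2},\dots,-\tfrac{n-1}{2},\ 2\Theta-\tfrac{n-1}{2},\ 2\Theta-\tfrac{n-3}{2}\right),
\]
which is integral, together with the lowest $K$-type $\kappa$ obtained from the algorithm of Example \ref{eg-lambdaa}.

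Next I would record that $X$ is the \emph{unique} irreducible module with lowest $K$-type $\kappa$ and infinitesimal character $\Lambda$. Indeed, by Theorem \ref{thm-datum} the $\lambda_a$-datum is pinned down by $\kappa$, so the shapes and contents of the $\Theta$- and $\Phi$-blocks are fixed, and the only remaining freedom is the two $\nu$-coordinates; these must split the part of $\Lambda$ lying off the $(1,0)$-block contents into a pair symmetric about $\Theta$ and a pair symmetric about $\Phi$, and a short check shows this splitting is forced. (This is exactly where Example \ref{eg-coind} does not apply here: the two ``extra'' entries of $\Lambda$ are consecutive rather than coincident.) Consequently it suffices to produce an $A_{\mathfrak{q}}(\lambda)$-module in the fair range with lowest $K$-type $\kappa$ and infinitesimal character $\Lambda$.

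Following Proposition \ref{prop-53}, take $\mathfrak{q} = \mathfrak{l} + \mathfrak{u}$ with $\mathfrak{l}_0 = \mathfrak{u}(n-2\Theta-1,1) + \mathfrak{u}(n+2\Phi-1,1)$, ordered so that the first factor carries the larger coordinates of $\Lambda$, and compute $\rho(\mathfrak{g})$, $\rho(\mathfrak{u})$, $2\rho(\mathfrak{u}\cap\mathfrak{p})$, and the decomposition of $S^{\bullet}(\mathfrak{u}\cap\mathfrak{p})$ as an $(L\cap K)\cong U(n-2\Theta-1)\times U(1)\times U(n+2\Phi-1)\times U(1)$-module. I would then choose the unitary character $\mathbb{C}_\lambda$ of $L$ with $\lambda + \rho(\mathfrak{g}) = \Lambda$ up to permutation, and verify that $\lambda$ lies in the (strictly) fair range: since the center $\mathfrak{z}$ of $\mathfrak{l}$ is two-dimensional, the inequalities \eqref{weakly-fair} only need to be tested, with strict inequality, on the roots joining the two $\mathfrak{gl}$-blocks, and this reduces to the statement that the average of the $\Lambda$-coordinates assigned to the first block strictly exceeds that assigned to the second — true because these two centroids are $\Theta$ and $\Phi$ and $\Theta-\Phi = \tfrac{n-2}{2} > 0$. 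Finally, applying the Blattner formula \eqref{eq-blattner} together with Kostant's theorem exactly as in Proposition \ref{prop-53}: for $i=0$, $H_0(\mathfrak{u}\cap\mathfrak{k},V_\kappa)$ pairs with precisely one summand of $S^{\bullet}(\mathfrak{u}\cap\mathfrak{p})\otimes \mathbb{C}_{\lambda+2\rho(\mathfrak{u}\cap\mathfrak{p})}$, while for $i>0$ the $(L\cap K)$-types in $H_i(\mathfrak{u}\cap\mathfrak{k},V_\kappa)$ carry a negative entry in a slot where no summand of $S^{\bullet}(\mathfrak{u}\cap\mathfrak{p})\otimes \mathbb{C}_{\lambda+2\rho(\mathfrak{u}\cap\mathfrak{p})}$ does; hence $[A_{\mathfrak{q}}(\lambda):V_\kappa]=1$. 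After checking that no $K$-type of smaller norm occurs, $A_{\mathfrak{q}}(\lambda)$ is a nonzero unitary module with lowest $K$-type $\kappa$ and infinitesimal character $\Lambda$, hence $A_{\mathfrak{q}}(\lambda)\cong X$ by the uniqueness above.

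The main obstacle will be the Blattner/Kostant bookkeeping: here $L$ has two $U(a,1)$-factors in place of the $U(n-1,1)\times U(0,1)\times U(1,0)$ of Proposition \ref{prop-53}, so $\mathfrak{u}\cap\mathfrak{p}$ breaks into several irreducible $(L\cap K)$-summands and matching the $(L\cap K)$-types of $S^{\bullet}(\mathfrak{u}\cap\mathfrak{p})\otimes\mathbb{C}_{\lambda+2\rho(\mathfrak{u}\cap\mathfrak{p})}$ against those of $H_\bullet(\mathfrak{u}\cap\mathfrak{k},V_\kappa)$ is correspondingly more delicate, with extra care needed at the boundary values $\Theta=0$ (the second factor becomes $\mathfrak{u}(1,1)$) and $\Theta=\tfrac{n-3}{2}$ (the first becomes $\mathfrak{u}(2,1)$). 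A secondary point is to confirm the \emph{strict} fairness asserted in the proposition, rather than merely weak fairness or membership in the mediocre range as in Proposition \ref{prop-53}.
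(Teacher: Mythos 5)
Your proposal follows exactly the template the paper intends: the paper itself gives no independent argument for this case, writing only ``As in Proposition~\ref{prop-53}, one has:'' before stating the proposition, so the content of a proof is precisely what you spell out. Your bookkeeping is correct: with $\nu_\Theta=\tfrac{n-1}{2}-\Theta$, $\nu_\Phi=\tfrac{n-1}{2}+\Phi$ and $\Phi=\Theta-\tfrac{n-2}{2}$ one gets $2\nu_\Theta=n-2\Theta-1$, $2\nu_\Phi=n+2\Phi-1$, the two factors of $\mathfrak{l}_0$ have total size $n+2$, the infinitesimal character is as you wrote, and the difference of centroids of $\Lambda$ on the two Levi factors is $\Theta-\Phi=\tfrac{n-2}{2}>0$, which gives \emph{strict} fairness as claimed (the paper's $\rho(\mathfrak{l})|_{\mathfrak{z}}=0$ observation reduces \eqref{weakly-fair} to exactly this centroid inequality).

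Two points are worth flagging as places where your outline would need to be filled in carefully, though neither is a gap in the approach. First, your uniqueness remark (``the two extra entries of $\Lambda$ are consecutive rather than coincident'') is the right instinct but not quite a proof: the failure mode in Example~\ref{eg-coind} comes from having \emph{three} blocks carrying $\nu$-parameters, and in Case~(3) there are only two ($\Theta$ and $\Phi$ with $\Theta\neq\Phi$), which is really what forces the splitting of the four non-$(1,0)$ coordinates of $\Lambda$ into the $\Theta$-centered and $\Phi$-centered pairs; you would want to phrase the argument in those terms, checking the sub-cases where a $\Theta$- or $\Phi$-block is a $(2,1)$-trapezoid (in which case it also contributes a bare $\Theta$ or $\Phi$ to $\Lambda$). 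Second, be a little careful with ``the first factor carries the larger coordinates of $\Lambda$'': the $\Lambda$-coordinates going to each factor follow the $\lambda_a$-blocks rather than being the sorted top $n-2\Theta$ entries (e.g.\ for $n=7$, $\Theta=1$ the factor-one coordinates are $(2,3,1,-1,0)$, which interleave with factor two's), so the ordering of the Levi is by $\lambda_a$-content, not by size of $\Lambda$-entries. With those clarifications, the argument goes through as in Proposition~\ref{prop-53}.
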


\item For $U(n,2)$, the basic modules
\begin{center} $J(\Theta,\Phi; \frac{n+1}{2}-\Theta,\frac{n-1}{2}-\Phi)$,\quad where $\Theta = \frac{-1}{2} \geq \Phi$ or $\Theta = \Phi = 0$.\end{center} 
They are the isolated points in Theorem \ref{thm-cii}(b).
For instance, for $n=9$, the module $J(\frac{-1}{2},-1;\frac{9}{2},3)$ corresponds to the datum:
\begin{center}
\begin{tikzpicture}
\draw
    (-0.05,0) 
 -- (0.1,0) node {\small $\frac{-1}{2}$}
 -- (0.25, 0) 
-- (0.25,-0.5) 
-- (0.1,-0.5) node {\small $\frac{-1}{2}$}
-- (-0.05,-0.5) 
 -- cycle;

  \draw[arrows = {-Stealth[]}]          (-0.05,0)   to [out=90,in=0]node[above]{\scriptsize $\frac{9}{2}$} (-2.4,0.5);
\draw[arrows = {-Stealth[]}]          (0.25,0)   to [out=90,in=180]node[above]{\scriptsize $\frac{-9}{2}$} (2.6,0.5);

\draw
    (0.35,0) 
 -- (0.65,0) node {\small $-1$\ $-1$}
-- (0.95,0) 
-- (0.85,-0.5) 
 -- (0.65,-0.5) node {\small $-1$}
-- (0.45,-0.5) 
 -- cycle;

  \draw[arrows = {-Stealth[]}]          (0.45,-0.5)   to [out=-90,in=0]node[below]{\scriptsize $3$} (-1,-0.8);
\draw[arrows = {-Stealth[]}]          (0.85,-0.5)   to [out=-90,in=180]node[below]{\scriptsize $-3$} (2.3,-0.8);

\draw
    (-0.7,0) 
 -- (-0.825,0) node {\small $1$}
-- (-0.95,0) 
-- (-0.875,-0.5) 
-- (-0.825,-0.5) 
-- (-0.775,-0.5) 
 -- cycle;

\draw
    (-0.25,0) 
 -- (-0.375,0) node {\small $0$}
-- (-0.5,0) 
-- (-0.425,-0.5) 
-- (-0.375,-0.5) 
-- (-0.325,-0.5) 
 -- cycle;

\draw
    (-1.125,0) 
 -- (-1.25,0) node {\small $2$}
-- (-1.375,0) 
-- (-1.3,-0.5) 
-- (-1.25,-0.5) 
-- (-1.2,-0.5) 
 -- cycle;

\draw
    (1.225,0) 
 -- (1.35,0) node {\small $-2$}
-- (1.475,0) 
-- (1.4,-0.5) 
-- (1.35,-0.5) 
-- (1.3,-0.5) 
 -- cycle;

\draw
    (1.7,0) 
 -- (1.825,0) node {\small $-3$}
-- (1.95,0) 
-- (1.875,-0.5) 
-- (1.825,-0.5) 
-- (1.775,-0.5) 
 -- cycle;

\draw
    (-1.6,0) 
 -- (-1.725,0) node {\small $3$}
-- (-1.85,0) 
-- (-1.775,-0.5) 
-- (-1.725,-0.5) 
-- (-1.675,-0.5) 
 -- cycle;
 \end{tikzpicture}.
 \end{center}

When $\Theta = \Phi = 0$, it corresponds to the trivial module (with $\mathfrak{l}_0 = \mathfrak{u}(n,2)$). Otherwise, one has:
\begin{proposition}
The unitary modules in (4) for $\Theta = -\frac{1}{2}$ are all $A_{\mathfrak{q}}(\lambda)$-modules in the mediocre range,  with $\mathfrak{l}_0 = \mathfrak{u}(n,1) + \mathfrak{u}(0,1)$.
\end{proposition}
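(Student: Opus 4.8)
The plan is to argue as in the proof of Proposition~\ref{prop-53}, replacing the Levi used there by $\mathfrak{l}_0 = \mathfrak{u}(n,1) + \mathfrak{u}(0,1)$. Fix a module $X = J(\Theta,\Phi;\nu_\Theta,\nu_\Phi)$ as in item~(4) with $\Theta = -\tfrac12 \ge \Phi$, the case $\Theta=\Phi=0$ having been treated separately. First I would read off from the $\theta$-stable datum of $X$, by means of Example~\ref{eg-lambdaa} and the construction of Section~\ref{sec-irrep}, its infinitesimal character $\Lambda$ (which is automatically integral) and its (unique) lowest $K$-type $\kappa = V_{(0,\dots,0|2\Theta,2\Phi)}$; I would also verify, directly from the particular shape of this datum as in Proposition~\ref{prop-53}, that the pair $(\Lambda,\kappa)$ determines $X$ among irreducible Hermitian $(\mathfrak{g},K)$-modules --- in contrast to the general phenomenon recorded in Example~\ref{eg-coind}, this uniqueness does hold in the present situation.

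Next I would set up $\mathfrak{q} = \mathfrak{l} + \mathfrak{u}$ with $\mathfrak{l}_0 = \mathfrak{u}(n,1)+\mathfrak{u}(0,1)$, choosing the ordering of the two blocks so that $\mathfrak{u}$ is compatible with the datum of $X$ (equivalently, so that $\kappa + 2\rho(\mathfrak{u}\cap\mathfrak{p})$ is built from the lowest $(L\cap K)$-type of the inducing character), and record $\rho(\mathfrak{g})$, $2\rho(\mathfrak{u}\cap\mathfrak{p})$ and the decomposition of $S^{\bullet}(\mathfrak{u}\cap\mathfrak{p})$ into $(L\cap K)\cong U(n)\times U(1)\times U(1)$-types, exactly as in the proof of Proposition~\ref{prop-53}. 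I would then take $\mathbb{C}_\lambda$ to be the one-dimensional unitary $(\mathfrak{l},L\cap K)$-module given by a power of $\tfrac{\det}{|\det|}$ on the $U(n,1)$-factor tensored with a power of the determinant character on the $U(0,1)$-factor, the two exponents being pinned down by the single requirement $\lambda + \rho(\mathfrak{g}) = \Lambda$ up to permutation of coordinates.

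Three points then remain to be checked. (i) That $\lambda$ lies in the mediocre range: this is verified against the explicit list of admissible parameters in~\cite[Lemma~3.5(c)]{T01}, and it is exactly the hypothesis that $(\nu_\Theta,\nu_\Phi)$ is one of the isolated points of Theorem~\ref{thm-ci}(b) --- at the far corner just outside the fundamental rectangle --- that makes the required inequalities hold; this is also the reason one lands only in the mediocre range rather than the weakly fair range. (ii) That $[A_{\mathfrak{q}}(\lambda):V_\kappa]=1$ and that no $K$-type of smaller norm occurs in $A_{\mathfrak{q}}(\lambda)$: this is the same bookkeeping as in Proposition~\ref{prop-53} --- one expands the right-hand side of~\eqref{eq-blattner} using Kostant's theorem on $\mathfrak{u}\cap\mathfrak{k}$-homology; in homological degree $0$ the term $H_0(\mathfrak{u}\cap\mathfrak{k},V_\kappa)$ is matched by exactly one summand of $S^{\bullet}(\mathfrak{u}\cap\mathfrak{p})\otimes\mathbb{C}_{\lambda+2\rho(\mathfrak{u}\cap\mathfrak{p})}$, and in each positive degree the $(L\cap K)$-types appearing in $H_i(\mathfrak{u}\cap\mathfrak{k},V_\kappa)$ have a strictly negative entry in a coordinate in which every summand of $S^{\bullet}(\mathfrak{u}\cap\mathfrak{p})\otimes\mathbb{C}_{\lambda+2\rho(\mathfrak{u}\cap\mathfrak{p})}$ is nonnegative, so those terms do not contribute. (iii) That $A_{\mathfrak{q}}(\lambda)\cong X$: since $A_{\mathfrak{q}}(\lambda)$ is nonzero and irreducible in the mediocre range by~\cite[Theorem~3.1(b)]{T01} and, by (i) and (ii), has infinitesimal character $\Lambda$ and lowest $K$-type $\kappa$, the uniqueness from the first paragraph gives $A_{\mathfrak{q}}(\lambda)\cong X$.

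The step I expect to be the main obstacle is (i), the verification of the mediocre-range condition: the Levi $\mathfrak{u}(n,1)+\mathfrak{u}(0,1)$ is nearly all of $\mathfrak{g}$, so $\mathbb{C}_\lambda$ is pushed well past the good and weakly fair ranges, and one must confirm against Trapa's criterion that the pairings $\langle \lambda + \rho(\mathfrak{u}),\alpha|_{\mathfrak{z}}\rangle$ still satisfy the mediocre-range inequalities for the remaining roots; the precise value of $(\nu_\Theta,\nu_\Phi)$ forced by unitarity at an isolated point is exactly what is needed here, and getting this alignment right --- rather than the Blattner computation in (ii), which is routine once the setup is fixed --- is the delicate part. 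A lesser subtlety is the uniqueness assertion in the first paragraph, which must be established by hand from the combinatorial shape of the datum.
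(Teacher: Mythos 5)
Your proposal is essentially the proof the paper intends: the paper states this proposition without giving a separate argument (it gives the full argument only for the first such proposition, Proposition~\ref{prop-53}, and then says ``as in Proposition~\ref{prop-53}, one has'' and lists the rest), and your plan is exactly the Proposition~\ref{prop-53} template specialized to $\mathfrak{l}_0 = \mathfrak{u}(n,1)+\mathfrak{u}(0,1)$. You correctly identify the three things that must actually be checked --- uniqueness of an irreducible module with the given $(\Lambda,\kappa)$, the range condition from \cite[Lemma~3.5(c)]{T01}, and the Blattner/Kostant computation of $[A_{\mathfrak{q}}(\lambda):V_\kappa]$ --- and your description of how each step goes is consistent with the paper's treatment. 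One small slip in step~(iii): \cite[Theorem~3.1(b)]{T01} only gives that $A_{\mathfrak{q}}(\lambda)$ in the mediocre range is zero or irreducible; the \emph{nonvanishing} is what the Blattner computation in (ii) establishes, not a consequence of the cited theorem, so the logical order there should be reversed. This does not affect the correctness of the overall plan.
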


\item For $U(n,2)$, the modules
\[\begin{cases} P_{1}(\frac{n-1}{2},\frac{n-3}{2};\frac{n}{2})  &\mathrm{if}\ n\ \textrm{is odd}\\ P_{2}(\frac{n-2}{2},\frac{n-4}{2};\frac{n}{2}) &\mathrm{if}\ n\ \textrm{is even} \end{cases}
\]
defined in Section \ref{subsec-ii}. For instance, for $n=7$, $P_{1}(3,2; \frac{7}{2})$ corresponds to the datum:
\begin{center}
    \begin{tikzpicture}
\foreach \x in {0,0.5,1,2,2.5}
	\draw (\x+0,0)--
 (\x+0.3,0)--(\x+0.225,-0.5)--
 (\x+0.075,-0.5)--
 cycle; 

\draw (1.5,0) -- (1.8,0) -- (1.8,-0.5) -- (1.5,-0.5) -- cycle;
\draw (1.65,0) node {\small $\frac{1}{2}$} ;
\draw (1.65,-0.5) node {\small $\frac{1}{2}$} ;
\draw (0.15,0) node {\small $3$} ;
\draw (0.65,0) node {\small $2$} ;
\draw (1.15,0) node {\small $1$} ;
\draw (2.15,0) node {\small $0$} ;
\draw (2.65,0) node {\small $-1$} ;
\draw (3.15,0) node {\small $-2$} ;
\draw (3.25,-0.5) node {\small $-2$} ;
 
 \draw (3,0) -- (3.3,0) -- (3.4,-0.5) -- (3.1,-0.5) -- cycle;
  \draw[arrows = {-Stealth[]}]          (1.5,0)   to [out=90,in=0]node[above]{\scriptsize $\frac{7}{2}$} (-0.3,0.5);
\draw[arrows = {-Stealth[]}]          (1.8,0)   to [out=90,in=180]node[above]{\scriptsize $\frac{-7}{2}$} (3.6,0.5);
\end{tikzpicture}

\begin{proposition}
The unitary modules in (5) are all $A_{\mathfrak{q}}(\lambda)$-modules in the fair range,  with $\mathfrak{l}_0 = \mathfrak{u}(n,1) + \mathfrak{u}(0,1)$. 
\end{proposition}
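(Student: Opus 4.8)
The plan is to adapt, almost verbatim, the strategy used for Proposition \ref{prop-53}. Write $X$ for the module in (5); it equals $P_a(l+1,l;\tfrac{n}{2})$ with $(a,l)=(1,\tfrac{n-3}{2})$ when $n$ is odd and $(a,l)=(2,\tfrac{n-4}{2})$ when $n$ is even, so in both cases the parameter $\nu$ on the $(a,1)$-block $\ast$ sits at the top of its allowed range. By the analysis of Sub-case IV in Section \ref{subsec-ii}, $X$ is irreducible and is the unique lowest $K$-type subquotient of the standard module attached to its $\theta$-stable datum \eqref{eq-bii2}; in particular it has a single lowest $K$-type. The first step is to read off, via Example \ref{eg-lambdaa} and Example \ref{eg-lambdaa2}, the infinitesimal character $\Lambda$ and the lowest $K$-type $V_\kappa$ of $X$ directly from the datum (the $n=7$ picture above shows the general shape): $\Lambda$ collects the block contents together with the $\pm\tfrac{n}{2}$ shift on $\ast$ and the vanishing shift on the parallelogram $\star$, while $V_\kappa$ comes from the usual $\rho(\mathfrak{u})$ and $2\rho(\mathfrak{u}\cap\mathfrak{p})$ bookkeeping. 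Observe that $\Lambda$ is integral, consistent with $X$ being an $A_\mathfrak{q}(\lambda)$-module.

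Next I would write down the candidate. Take the $\theta$-stable parabolic $\mathfrak{q}=\mathfrak{l}+\mathfrak{u}$ with $\mathfrak{l}_0=\mathfrak{u}(n,1)+\mathfrak{u}(0,1)$, so that $L\cap K\cong U(n)\times U(1)\times U(1)$, and choose the unitary character $\mathbb{C}_\lambda=V^L_{(c,\dots,c\,;\,c\,|\,c')}$ with $c,c'\in\mathbb{Z}$ determined by the requirement that $\lambda+\rho(\mathfrak{g})$ equal $\Lambda$ up to a permutation of coordinates, exactly as in Proposition \ref{prop-53}. One then checks that this $\lambda$ is in the \emph{fair} range: since $\mathfrak{l}$ has two factors, its center $\mathfrak{z}$ is two-dimensional, so the conditions \eqref{weakly-fair} collapse to two numerical inequalities in $c,c',n$, and these are \emph{strict} precisely because $\nu$ is at its extremal value $\tfrac{n}{2}$ on $\ast$ (equivalently, because the two segments of the corresponding partition of the datum are strictly separated, cf.\ Lemma \ref{lem-good}). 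In particular $\lambda$ lies in the mediocre range of \cite{T01}, so $A_\mathfrak{q}(\lambda)=\mathcal{L}^S_\mathfrak{q}(\mathbb{C}_\lambda)$ is concentrated in degree $S=\dim(\mathfrak{u}\cap\mathfrak{k})$ and is either zero or irreducible.

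The third step is to match invariants. The infinitesimal character of $A_\mathfrak{q}(\lambda)$ is $\lambda+\rho(\mathfrak{g})=\Lambda$ by construction. For the lowest $K$-type I would compute $S^{\bullet}(\mathfrak{u}\cap\mathfrak{p})\otimes\mathbb{C}_{\lambda+2\rho(\mathfrak{u}\cap\mathfrak{p})}$ as an $(L\cap K)$-module — a single family indexed by a handful of non-negative integers, as in \eqref{eq-sup} — compute $H_i(\mathfrak{u}\cap\mathfrak{k},V_\kappa)$ by Kostant's theorem, and feed these into the Blattner formula \eqref{eq-blattner}. The $i=0$ term should contribute $V_\kappa$ with multiplicity one; for $i>0$ every $(L\cap K)$-type in $H_i(\mathfrak{u}\cap\mathfrak{k},V_\kappa)$ has a strictly negative entry among the $U(n)$- or first $U(1)$-coordinates and so cannot occur in $S^{\bullet}(\mathfrak{u}\cap\mathfrak{p})$, giving $[A_\mathfrak{q}(\lambda):V_\kappa]=1$; and a short norm estimate shows no $K$-type of smaller norm appears. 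Hence $A_\mathfrak{q}(\lambda)$ is nonzero, irreducible, with infinitesimal character $\Lambda$ and lowest $K$-type $V_\kappa$. Finally, inspecting the shape of the $\theta$-stable datum of $X$ as in Proposition \ref{prop-53}, one verifies that there is a \emph{unique} irreducible $(\mathfrak{g},K)$-module with this infinitesimal character and this lowest $K$-type; the non-uniqueness phenomenon of Example \ref{eg-coind} does not arise here because the combinatorial data — the $(a,1)$-block $\ast$, the parallelogram $\star$ with vanishing $\nu$, and the extremal value of $\nu$ on $\ast$ — leave no competing datum with the same pair $(\Lambda,V_\kappa)$. Therefore $X\cong A_\mathfrak{q}(\lambda)$, which is in the fair range by the second step.

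I expect the last step — the rigidity argument showing that $(\Lambda,V_\kappa)$ pins down $X$ — to be the main obstacle, since Example \ref{eg-coind} demonstrates that this can genuinely fail; one must exploit the specific shape of the datum of $X$. Should this route prove awkward, a fallback is to match $A_\mathfrak{q}(\lambda)$ with the explicit realization $X=\mathcal{R}_{\mathfrak{q}_1}(\psi_1\boxtimes\Psi_2)$ of Sub-case IV by a change-of-parabolic argument in the spirit of \cite[Theorem 9.1]{T01}. The remaining laborious point is the Kostant cohomology computation of $H_i(\mathfrak{u}\cap\mathfrak{k},V_\kappa)$, but this is entirely routine in the style of Proposition \ref{prop-53}.
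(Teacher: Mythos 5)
Your proof follows the same template as the paper's proof of Proposition \ref{prop-53}, which is evidently the intended argument here (the paper states this proposition without a separate proof, having worked out the analogous Proposition \ref{prop-53} in full). Your worry about the rigidity step resolves as you hope: the $\lambda_a$-datum forced by $V_\kappa$ has exactly two blocks carrying $\nu$-coordinates, namely the $(a,1)$-block $\ast$ and the parallelogram $\star$, and because these two blocks have different contents, matching the multiset $\Lambda$ forces $\nu=\tfrac{n}{2}$ on $\ast$ and $\nu=0$ on $\star$ uniquely, so $(\Lambda,V_\kappa)$ pins down $X$ and the pathology of Example \ref{eg-coind} does not occur.
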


\end{center}

\item For $U(2,2)$, the modules corresponding to the parallelograms:
\begin{center}
\begin{tikzpicture}
\draw
    (-0.15,0) 
 -- (-0.3,0) node {\scriptsize $\frac{1}{2}$}
 -- (-0.45, 0) 
-- (-0.6,0) node {\scriptsize $\frac{1}{2}$}
-- (-0.75,0)
-- (-0.6,-0.5) 
-- (-0.45,-0.5) node {\scriptsize $\frac{1}{2}$}
-- (-0.3,-0.5)
-- (-0.15,-0.5) node {\scriptsize $\frac{1}{2}$}
-- (0,-0.5)
 -- cycle;

 \draw[arrows = {-Stealth[]}]          (-0.15,0)   to [out=90,in=00]node[above]{\scriptsize $0,-1$} (0.15,0.3);
\draw[arrows = {-Stealth[]}]          (-0.75,0)   to [out=90,in=270]node[above]{\scriptsize $1,0$} (-0.9,0.3);
\end{tikzpicture}
or  \begin{tikzpicture}
\draw
    (0.15,0) 
 -- (0.3,0) node {\scriptsize $\frac{1}{2}$}
 -- (0.45, 0) 
-- (0.6,0) node {\scriptsize $\frac{1}{2}$}
-- (0.75,0)
-- (0.6,-0.5) 
-- (0.45,-0.5) node {\scriptsize $\frac{1}{2}$}
-- (0.3,-0.5)
-- (0.15,-0.5) node {\scriptsize $\frac{1}{2}$}
-- (0,-0.5)
 -- cycle;

 \draw[arrows = {-Stealth[]}]          (0.15,0)   to [out=90,in=00]node[above]{\scriptsize $1,0$} (-0.15,0.3);
\draw[arrows = {-Stealth[]}]          (0.75,0)   to [out=90,in=180]node[above]{\scriptsize $0,-1$} (0.9,0.3);
\end{tikzpicture}.
\end{center}

\begin{proposition}
The unitary modules in (6) are $A_{\mathfrak{q}}(\lambda)$ modules in the weakly fair range, with $\mathfrak{l}_0 = \mathfrak{u}(2,1) + \mathfrak{u}(0,1)$ 
and 
$\mathfrak{l}_0 = \mathfrak{u}(1,2) + \mathfrak{u}(1,0)$ respectively.
\end{proposition}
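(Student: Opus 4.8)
The plan is to follow verbatim the strategy of the proof of Proposition \ref{prop-53}: exhibit an explicit $\theta$-stable parabolic $\mathfrak{q}=\mathfrak{l}+\mathfrak{u}$ with $\mathfrak{l}_0=\mathfrak{u}(2,1)+\mathfrak{u}(0,1)$ and a unitary character $\mathbb{C}_\lambda$ of $L$ in the weakly fair range such that $A_{\mathfrak{q}}(\lambda)$ is nonzero and has the same infinitesimal character $\Lambda$ and lowest $K$-type $V_\kappa$ as the module $X$ attached to the first datum in (6), and then appeal to a uniqueness statement to conclude $X\cong A_{\mathfrak{q}}(\lambda)$. For the second datum I would not repeat the computation: the automorphism $\sigma$ of $U(2,2)$ given by conjugation by the permutation matrix of $(13)(24)$ interchanges the two parallelogram orientations, sends $K$-types $V_{(a_1,a_2\,|\,b_1,b_2)}$ to $V_{(b_1,b_2\,|\,a_1,a_2)}$, carries $\mathfrak{u}(2,1)+\mathfrak{u}(0,1)$ to $\mathfrak{u}(1,2)+\mathfrak{u}(1,0)$, preserves the weakly fair range condition, and intertwines $A_{\mathfrak{q}}(\lambda)$ with $A_{\sigma\mathfrak{q}}(\sigma\lambda)$; hence the second case follows from the first.

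First I would record the data of $X$. Applying the algorithm of Example \ref{eg-lambdaa} to the first $(2,2)$-parallelogram of (6) (content $\frac12$, the orientation whose bottom row is shifted to the right, so that one adds $\frac12$ to the top entries and subtracts $\frac12$ from the bottom entries) gives lowest $K$-type $V_\kappa$ with $\kappa=(1,1\,|\,0,0)$; since the $\nu$-coordinates are $(\nu_1,\nu_2)=(1,0)$, the infinitesimal character is $\Lambda=(\frac32,\frac12\,|\,\frac12,-\frac12)$ up to conjugacy. By Proposition \ref{prop-u22}(a) the opposite orientation (here $y=0$) yields a \emph{different} module, whose lowest $K$-type $V_{(0,0|1,1)}$ is distinct from $V_\kappa$; together with the combinatorial parametrization of Section \ref{sec-irrep} this shows that $X$ is the unique irreducible $(\mathfrak{g},K)$-module with infinitesimal character $\Lambda$ having $V_\kappa$ as a lowest $K$-type (this can also be confirmed directly with \texttt{atlas}). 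It therefore suffices to exhibit the desired $A_{\mathfrak{q}}(\lambda)$.

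Next I would set up $\mathfrak{q}$. Writing the weights of the compact Cartan of $U(2,2)$ as $(x_1,x_2\,|\,y_1,y_2)$ and taking $\rho(\mathfrak{g})=(\frac32,\frac12\,|\,-\frac12,-\frac32)$, let $\mathfrak{q}=\mathfrak{l}+\mathfrak{u}$ be the $\theta$-stable parabolic defined by $x=(1,1\,|\,1,0)\in i\mathfrak{t}_0$, so that $\mathfrak{l}_0=\mathfrak{u}(2,1)\oplus\mathfrak{u}(0,1)$ (the first factor on $x_1,x_2,y_1$, the second on $y_2$) and $\Delta(\mathfrak{u},\mathfrak{h})=\{x_1-y_2,\ x_2-y_2,\ y_1-y_2\}$; thus $\dim(\mathfrak{u}\cap\mathfrak{k})=1$ (the root $y_1-y_2$), $2\rho(\mathfrak{u}\cap\mathfrak{p})=(1,1\,|\,0,-2)$, $\rho(\mathfrak{u})=(\frac12,\frac12\,|\,\frac12,-\frac32)$, and $S^{\bullet}(\mathfrak{u}\cap\mathfrak{p})=\bigoplus_{a\ge0}V^{L}_{(a,0;\,0\,|\,-a)}$ as an $(L\cap K)\cong U(2)\times U(1)\times U(1)$-module. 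I would take $\mathbb{C}_\lambda$ to be the unitary character of $L$ with $\lambda=(0,0\,|\,0,2)$ (trivial on the $U(2,1)$ factor); then $\lambda+\rho(\mathfrak{g})$ is conjugate to $\Lambda$, $\lambda+2\rho(\mathfrak{u}\cap\mathfrak{p})=\kappa$, and $\lambda+\rho(\mathfrak{u})=\frac12(1,1,1,1)$ pairs to $0$ with every root of $\mathfrak{u}$, so $\lambda$ lies in the weakly fair range (indeed on its boundary). Finally I would run the Blattner formula \eqref{eq-blattner} with $S=\dim(\mathfrak{u}\cap\mathfrak{k})=1$: Kostant's theorem gives $H_0(\mathfrak{u}\cap\mathfrak{k},V_\kappa)=V^{L}_{(1,1;\,0\,|\,0)}$ and $H_1(\mathfrak{u}\cap\mathfrak{k},V_\kappa)=V^{L}_{(1,1;\,-1\,|\,1)}$, and since $S^{\bullet}(\mathfrak{u}\cap\mathfrak{p})\otimes\mathbb{C}_{\lambda+2\rho(\mathfrak{u}\cap\mathfrak{p})}=\bigoplus_{a\ge0}V^{L}_{(a+1,1;\,0\,|\,-a)}$, only the term $i=0$, $a=0$ contributes, giving $[A_{\mathfrak{q}}(\lambda):V_\kappa]=1$; the same comparison shows that the only other $K$-types of $U(2,2)$ with the correct central character and norm $\le\|\kappa\|$, namely $V_{(1,0|1,0)}$ and $V_{(0,0|1,1)}$, do not occur. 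Hence $A_{\mathfrak{q}}(\lambda)$ is nonzero with lowest $K$-type $V_\kappa$ and infinitesimal character $\Lambda$, and by Theorem \ref{vanishing} (or the results recalled at the beginning of Section \ref{sec-coh}) it is irreducible and unitary; by the uniqueness step, $X\cong A_{\mathfrak{q}}(\lambda)$.

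The main obstacle is the uniqueness step. As Example \ref{eg-coind} warns, in general an irreducible module is not determined by its infinitesimal character together with a lowest $K$-type, so one must exploit the special feature here — that the two orientations of the $(2,2)$-parallelogram give distinct lowest $K$-types precisely because $y=0$ (Proposition \ref{prop-u22}(a)), which pins down the $\lambda_a$-datum, after which fixing the (singular) infinitesimal character $\Lambda$ rigidifies the $\nu$-parameter. A secondary technical point is to verify in the Blattner step that no $K$-type smaller than $V_\kappa$ survives; as in Proposition \ref{prop-53} this reduces, via the central character constraint ($\textstyle\sum$ of the coordinates equals $2$), to inspecting the finitely many $U(2,2)$-types of norm at most $\|\kappa\|$.
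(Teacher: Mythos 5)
Your proposal is correct and follows essentially the same method as Proposition \ref{prop-53}, which is the approach the paper intends for this case. The data you record check out: $\kappa=(1,1\,|\,0,0)$ agrees with Proposition \ref{prop-u22}(b), the infinitesimal character $\Lambda=(\tfrac32,\tfrac12\,|\,\tfrac12,-\tfrac12)$ is forced by $\nu=(1,0)$, and the uniqueness step is sound since the $\lambda_a$-datum is uniquely determined by $\kappa$ via Theorem \ref{thm-datum} and then $\Lambda$ pins down $(\nu_1,\nu_2)=(1,0)$. Your choice of $\mathfrak{q}$ (defined by $(1,1\,|\,1,0)$), the character $\lambda=(0,0\,|\,0,2)$, the verification that $\lambda+\rho(\mathfrak{u})=\tfrac12(1,1,1,1)$ pairs to zero with every root of $\mathfrak{u}$ on $\mathfrak{z}$ (so $\lambda$ sits on the boundary of the weakly fair range), and the Blattner computation $[A_{\mathfrak{q}}(\lambda):V_\kappa]=1$ with the vanishing of the other small $K$-types $V_{(0,0|1,1)}$ and $V_{(1,0|1,0)}$ are all correct. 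The only genuinely different ingredient from the paper's presentation is the reduction of the second datum to the first via the order-two automorphism of $U(2,2)$ exchanging the two signature blocks; this is a clean shortcut that avoids repeating the Blattner calculation and is certainly valid, since such an automorphism commutes with $\theta$, carries $\theta$-stable parabolics to $\theta$-stable parabolics, preserves the weakly fair range, and intertwines the cohomological induction functors.
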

\end{enumerate}
Now Theorem \ref{thm-trapa} follows by combining the results in (1) -- (6) above.
\begin{remark}
    We end this section by remarking that Theorem \ref{thm-trapa} holds for general $U(p,q)$ under the assumption that $X$ is a spherical module (cf. \cite[Theorem 1]{B04}). Under such assumption, the Levi factor inside $\mathfrak{l}_0 = \mathfrak{u}(p_1,q_1) \oplus \dots \oplus \mathfrak{u}(p_k,q_k)$ of largest rank ($=p_i + q_i - 1$) is determined explicitly in \cite[Section 7.8]{B04}, which is similar to the cases we studied above. One therefore expects that the techniques in \cite{B04} can be generalized to all fundamental modules. 

    Moreover, with the explicit knowledge of these modules, one can determine the Dirac series of $U(n,2)$, and check whether the necessary conditions for an $A_{\mathfrak{q}}(\lambda)$-module to be nonzero \cite[Remark 5.8]{DW21} holds or not.
\end{remark}

\section{Special Unipotent Representations} \label{sec-unip}
Motivated by the study of automorphic representations of locally symmetric spaces, Arthur introduced some families of representations for real and complex reductive groups called {\bf special unipotent representations}. These representations are well-studied in the complex groups case \cite{BV85}, and they possess certain favorable properties conjectured by Arthur. 

More explicitly, these representations are conjectured to be unitary, and it is expected that they are the main building blocks of the unitary dual of $G$. Indeed, in the case of complex classical groups, it was shown in \cite{B89} that all special unipotent representations are unitary. 
Recently, Barbasch, Ma, Sun and Zhu \cite{BMSZ21}, \cite{BMSZ22} constructed all special unipotent representations for classical real reductive groups, and proved that they are unitary. Their constructions are based on Howe's dual pair correspondence, and it is not clear how the Langlands parameters behave under the correspondence in general. 
In this section, we will determine the $\theta$-stable datum of all special unipotent representations for $G = U(n,1)$ and $U(n,2)$.

\medskip
To begin with, recall that special unipotent representations are partitioned by the complex special nilpotent orbits 
$\mathcal{O}^{\vee} \subseteq {}^L\mathfrak{g}$ in the Langlands dual of $G$. More precisely, all special unipotent representations attached to $\mathcal{O}^{\vee}$ have infinitesimal character $\frac{1}{2}h^{\vee}$ and complex associated variety equal to the closure of the $d_{LSBV}(\mathcal{O}^{\vee}) \subseteq \mathfrak{g}$, where $d_{LSBV}$ is the Lusztig-Spaltenstein-Barbasch-Vogan dual map. 

Denote $\Pi(\mathcal{O}^{\vee})$ 
be the (weak) packet of special unipotent representations attached to $\mathcal{O}^{\vee}$. In \cite{BMSZ22}, an algorithm is given to compute the cardinality of $\Pi(\mathcal{O}^{\vee})$, and the main result in \cite{BMSZ21} is to construct these representations and showed that they are unitary. 

\medskip
Now focus on $G = U(p,q)$. In such a case, the nilpotent orbits $\mathcal{O}^{\vee}$ are parametrized by partitions $(r_1 \geq r_2 \geq \dots \geq r_k)$ of $p+q$, and $d_{LSBV}$ is given by the transpose of the partition. We say $r_i \in \mathbb{N}$ is of {\bf good parity} if $r_i \equiv p+q\ (mod\ 2)$, otherwise it is of {\bf bad parity}.

Here is the main result in \cite{BMSZ21} and \cite{BMSZ22} on the structure of $\Pi(\mathcal{O}^{\vee})$:
\begin{theorem} \label{thm-bmsz}
Let $G = U(p,q)$, and $\mathcal{O}^{\vee}$ be a nilpotent orbit in ${}^L\mathfrak{g}$ corresponding to partition of the form:
\[\mathcal{O}^{\vee} = (g_1 \geq g_2 \geq \dots \geq g_k) \sqcup (b_1 \geq b_2 \geq \dots \geq b_l),\] 
where $g_i$ are of good parity and $b_i$ are of bad parity. Then 
\begin{center}
$|\Pi(\mathcal{O}^{\vee})| \neq 0$ if and only if $l = 2t$ is even, and $b_{2i-1} = b_{2i}$ for all $i$.
\end{center}
Under such assumption, let $\mathcal{O}_{good}^{\vee} := (g_1 \geq g_2 \geq \dots \geq g_k)$ be a sub-partition of $\mathcal{O}^{\vee}$ consisting only of good parity row sizes. Then
$$\Pi(\mathcal{O}^{\vee}) = \left\{ \mathrm{Ind}_{(\prod_{i=1}^{t} GL(b_{2i},\mathbb{C}) \times U(p^-,q^-))N}^{U(p,q)}\left(\boxtimes_{i=1}^t \mathrm{triv} \boxtimes X \boxtimes 1 \right)\ |\ X \in \Pi(\mathcal{O}_{good}^{\vee})\right\},$$
where $p^- := p-\sum_{i=1}^t b_{2i}$ and $q^- = q-\sum_{i=1}^t b_{2i}$. Moreover, $|\Pi(\mathcal{O}_{good}^{\vee})|$ is equal to the number of signed partitions of $\mathcal{O}_{good}^{\vee}$ with $p^-$ pluses and $q^-$ minuses.
\end{theorem}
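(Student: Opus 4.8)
The plan is to analyze the packet $\Pi(\mathcal{O}^{\vee})$ through its invariants — all of its members have infinitesimal character $\frac{1}{2}h^{\vee}$ and complex associated variety $\overline{d_{LSBV}(\mathcal{O}^{\vee})}$, so the problem is a finite enumeration — and to reduce the general case to the good-parity case by real parabolic induction, the good-parity core being governed by the classification of derived-functor modules of $U(p^{-},q^{-})$. There are three steps: peeling off the bad-parity rows, the nonvanishing criterion, and the count for $\mathcal{O}_{good}^{\vee}$.

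First I would isolate the bad-parity rows. Each row $r$ of $\mathcal{O}^{\vee}$ contributes to $\frac{1}{2}h^{\vee}$ the string $(\frac{r-1}{2},\frac{r-3}{2},\dots,-\frac{r-1}{2})$; the strings coming from good-parity rows lie in the $\mathbb{Z}$-coset of $\rho(\mathfrak{g})$, whereas each bad-parity row contributes a string in the opposite coset. Consequently the block of any $X\in\Pi(\mathcal{O}^{\vee})$ is ``mixed'', which forces $X$ to arise by real parabolic induction from a Levi of the form $\bigl(\prod_{i}GL(b_{i},\mathbb{C})\bigr)\times U(p^{-},q^{-})$, the $GL(\mathbb{C})$-factors carrying exactly the bad-coset part of the infinitesimal character. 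Since the trivial representation of $GL(b,\mathbb{C})$ has infinitesimal character equal to two copies of $(\frac{b-1}{2},\dots,-\frac{b-1}{2})$ — precisely a pair of equal bad rows — and since matching complex associated varieties (the associated variety of an induced module is the induction of that of the inducing module) pins the $U(p^{-},q^{-})$-factor down as a unipotent representation attached to $\mathcal{O}_{good}^{\vee}$, this gives the inclusion ``$\supseteq$'' in the displayed description: each $\mathrm{Ind}(\boxtimes\mathrm{triv}\boxtimes X\boxtimes 1)$ is unitary because unitary parabolic induction preserves unitarity, and irreducible because the $GL(\mathbb{C})$-blocks are unlinked with the $U(p^{-},q^{-})$-block, so no intertwining operator degenerates.

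The hard part will be the nonvanishing criterion: showing that there are no further special unipotent representations and, in particular, that $\Pi(\mathcal{O}^{\vee})=\emptyset$ unless $l=2t$ is even with $b_{2i-1}=b_{2i}$ for all $i$. By the coset analysis above, every $X\in\Pi(\mathcal{O}^{\vee})$ is a Langlands subquotient of $\mathrm{Ind}_{MN}^{G}(\sigma\boxtimes 1)$ for some $M$ containing a product of $GL(\mathbb{C})$'s that carries the bad coset; then the requirement that $X$ have associated variety $\overline{d_{LSBV}(\mathcal{O}^{\vee})}$ and be unipotent forces $\sigma$, up to one-dimensional twists, to be the trivial representation of a product $\prod GL(b_{i},\mathbb{C})$ with the $b_{i}$ occurring in equal pairs; an unpaired bad row, or two bad rows of unequal size, admits no such $\sigma$, and the packet is empty. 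This is precisely the content of the cardinality algorithm of \cite{BMSZ22}, which is established there via Howe's theta correspondence; running the argument intrinsically — controlling simultaneously the unitarity and the wavefront set of the parabolically induced modules involved — is the step I expect to be the main obstacle.

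Finally I would count $\Pi(\mathcal{O}_{good}^{\vee})$. Once the bad rows are gone, $\frac{1}{2}h^{\vee}(\mathcal{O}_{good}^{\vee})$ lies in the $\rho$-coset and, by the type-$A$ case of the Adams--Johnson construction, $\Pi(\mathcal{O}_{good}^{\vee})$ consists exactly of the modules $A_{\mathfrak{q}}(\lambda)$ in the mediocre range — hence nonzero and unitary — for the $\theta$-stable parabolic subalgebras $\mathfrak{q}$ whose Levi $\bigoplus_{j}\mathfrak{u}(a_{j},b_{j})$ has block-size multiset $\{a_{j}+b_{j}\}$ dictated by $d_{LSBV}(\mathcal{O}_{good}^{\vee})$, subject to $\sum_{j}a_{j}=p^{-}$ and $\sum_{j}b_{j}=q^{-}$. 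Two such data yield the same module if and only if the associated signed Young diagrams coincide (cf.\ Trapa's description of derived-functor modules for $U(p,q)$, \cite{T01}), and this identifies $\Pi(\mathcal{O}_{good}^{\vee})$ with the set of signed Young diagrams of shape $\mathcal{O}_{good}^{\vee}$ having $p^{-}$ plus-boxes and $q^{-}$ minus-boxes, which yields the stated cardinality. Feeding this count back into the previous steps completes the proof.
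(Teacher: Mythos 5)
The paper does not actually prove Theorem~\ref{thm-bmsz}: it is stated as a recalled result, with the sentence immediately preceding it reading ``Here is the main result in \cite{BMSZ21} and \cite{BMSZ22}.'' So there is no in-paper proof to compare against; the comparison is between your proposed argument and a citation.

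Your proposed argument outlines a plausible organizational scheme (peel off bad-parity rows by real parabolic induction from $GL(b,\mathbb{C})$-factors; then classify the good-parity core by $A_{\mathfrak{q}}(\lambda)$-modules and signed Young diagrams), but it contains a genuine gap at its core, which you yourself flag. The coset observation that $\tfrac{1}{2}h^\vee$ is ``mixed'' shows the parameter does not lie in a single translate of the root lattice, but it does not by itself force every member of $\Pi(\mathcal{O}^\vee)$ to be irreducibly induced from a Levi of precisely the stated form, nor does it yield the pairing condition $b_{2i-1}=b_{2i}$ or the emptiness of $\Pi(\mathcal{O}^\vee)$ otherwise. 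You concede that this is ``precisely the content of the cardinality algorithm of \cite{BMSZ22}, which is established there via Howe's theta correspondence'' and that carrying it out intrinsically ``is the step I expect to be the main obstacle.'' Establishing the count $|\Pi(\mathcal{O}^{\vee}_{good})|$ via an identification with $A_{\mathfrak{q}}(\lambda)$-modules and signed Young diagrams is likewise asserted rather than proven; note that the claim that special unipotent representations of $U(p,q)$ with good-parity data are exactly mediocre-range $A_{\mathfrak{q}}(\lambda)$-modules is closely related to Trapa's Conjecture~1.1 in \cite{T01}, which the present paper proves only for $U(n,2)$ (Theorem~\ref{thm-trapa}) and which is not known in full generality. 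In short, the outline is a reasonable roadmap through the literature but is not a self-contained proof; the essential steps (nonvanishing, pairing of bad rows, counting $\Pi(\mathcal{O}^{\vee}_{good})$) are exactly the ones supplied by the references the paper cites, and your sketch leaves them unestablished.
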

In view of the above theorem, it suffices to describe the $\theta$-stable datum of $\Pi(\mathcal{O}^{\vee})$ whose row sizes are of good parity. Indeed, if $\pi = \mathrm{Ind}_{(GL(b) \times G')N}^{G}(\mathrm{triv} \boxtimes \pi' \boxtimes 1)$, then the $\theta$-stable datum of $\pi$ can be obtained from that of $\pi'$ by adding a $(b,b)$-block with content $0$ and $\nu$-coordinates
\[\nu = (\frac{b-1}{2}, \frac{b-1}{2}, \frac{b-3}{2}, \frac{b-3}{2}, \dots,  -\frac{b-3}{2}, -\frac{b-3}{2}, -\frac{b-1}{2}, -\frac{b-1}{2}).\]

In the following sections, we will describe the $\theta$-stable datum corresponding to $\mathcal{O}^{\vee}$ not equal to the principal nilpotent orbit (which corresponds to the trivial representation of $G$). 
\subsection{$G = U(n,1)$} 
\begin{itemize}
    \item $\mathcal{O}^{\vee} = (n-1,1,1)$. One has $|\Pi(\mathcal{O}^{\vee})| = 1$ regardless of the parity of $n$,  and the representation has infinitesimal character $\Lambda := (\frac{n-2}{2}, \dots, -\frac{n-2}{2}; 0; 0)$. This is real parabolically induced from the trivial representation of the Levi subgroup $GL(1) \times U(n-1,0)$ by Theorem \ref{thm-bmsz}.
\end{itemize}

\subsection{$G = U(n,2)$, $n$ even}
\begin{itemize}
\item $\mathcal{O}^{\vee} = (n,2)$. In such a case, $|\Pi(\mathcal{O}^{\vee})| = 3$, and the representations have infinitesimal character $\Lambda := (\frac{n-1}{2}, \dots, -\frac{n-1}{2}; \frac{1}{2}, -\frac{1}{2})$. For $n \geq 4$, the $\theta$-stable data corresponding to these three modules are:
\begin{center}
\begin{longtable}{|c|c|}
\hline 
$\theta$-stable datum & Associated variety \tabularnewline
\hline 
\hline 
    \begin{tikzpicture}
\foreach \x in {-0.7,1.8,2.5,3.5}
	\draw (\x+0,0)--
 (\x+0.5,0)--(\x+0.3,-0.7)--
 (\x+0.2,-0.7)--
 cycle; 
\node at (1.5,-0.35) {$\cdots$};
\node at (0.2,-0.35) {$\cdots$};
\node at (3.25,-0.35) {$\cdots$}; 

\draw (0.5,0) -- (1.2,0) --
(1.2,-0.7) -- (0.5,-0.7) -- cycle;

\node at (0.5,0.1) {\tiny $\frac{n-2}{4}$};
\node at (1.2,0.1) {\tiny $\frac{n-2}{4}$};
\node at (0.5,-0.8) {\tiny $\frac{n-2}{4}$};
\node at (1.2,-0.8) {\tiny $\frac{n-2}{4}$};

  \draw[arrows = {-Stealth[]}]          (0.85,0)   to [out=90,in=0]node[above]{\tiny $\frac{n}{4}, \frac{n-4}{4}$} (-1.15,0.7);
\draw[arrows = {-Stealth[]}]          (0.85,0)   to [out=90,in=180]node[above]{\tiny $\frac{-n}{4}, \frac{-(n-4)}{4}$} (2.85,0.7);

\node at (3.75,0.1) {\tiny $\frac{-(n-1)}{2}$};
\node at (2.75,0.1) {\tiny $\frac{-1}{2}$};
\node at (2.05,0.1) {\tiny $\frac{1}{2}$};
\node at (-0.45,0.1) {\tiny $\frac{n-5}{2}$};

\end{tikzpicture} &  \begin{tabular}{|c|c}
\hline 
$-$ & \multicolumn{1}{c|}{+}\tabularnewline
\hline 
$-$ & \multicolumn{1}{c|}{+}\tabularnewline
\hline 
+ & \tabularnewline
\cline{1-1} 
$\vdots$ & \tabularnewline
\cline{1-1} 
+ & \tabularnewline
\cline{1-1} 
\multicolumn{1}{c}{} & \tabularnewline
\end{tabular} \tabularnewline
\hline 
    \begin{tikzpicture}
\foreach \x in {-1.2,0.8,1.5,3.5}
	\draw (\x+0,0)--
 (\x+0.5,0)--(\x+0.3,-0.7)--
 (\x+0.2,-0.7)--
 cycle; 

\node at (-0.95,0.1) {\tiny $\frac{n-3}{2}$};
\node at (1.05,0.1) {\tiny $\frac{1}{2}$};
\node at (1.75,0.1) {\tiny $\frac{-1}{2}$};
\node at (3.75,0.1) {\tiny $\frac{-(n-3)}{2}$};

\node at (-0.45,-0.35) {$\cdots$};
\node at (3.25,-0.35) {$\cdots$}; 

\draw (-0.2,0) -- (0.2,0) --
(0.2,-0.7) -- (-0.2,-0.7) -- cycle;

  \draw[arrows = {-Stealth[]}]          (0,0)   to [out=90,in=0]node[above]{\tiny $\frac{n}{4}$} (-1.3,0.7);
\draw[arrows = {-Stealth[]}]          (0,0)   to [out=90,in=180]node[above]{\tiny $\frac{-n}{4}$} (1.3,0.7);

\node at (0,0.1) {\tiny $\frac{n-2}{4}$};
\node at (2.7,0.1) {\tiny $\frac{-(n-2)}{4}$};
\node at (0,-0.8) {\tiny $\frac{n-2}{4}$};
\node at (2.7,-0.8) {\tiny $\frac{-(n-2)}{4}$};

\draw (2.5,0) -- (2.9,0) --
(2.9,-0.7) -- (2.5,-0.7) -- cycle;

  \draw[arrows = {-Stealth[]}]          (2.7,0)   to [out=90,in=0]node[above]{\tiny $\frac{n}{4}$} (1.4,0.7);
\draw[arrows = {-Stealth[]}]          (2.7,0)   to [out=90,in=180]node[above]{\tiny $\frac{-n}{4}$} (4,0.7);

\node at (0.6,-0.35) {$\cdots$};
\node at (2.25,-0.35) {$\cdots$}; 

\end{tikzpicture} &  \begin{tabular}{|c|c}
\hline 
+ & \multicolumn{1}{c|}{$-$}\tabularnewline
\hline 
$-$ & \multicolumn{1}{c|}{+}\tabularnewline
\hline 
+ & \tabularnewline
\cline{1-1} 
$\vdots$ & \tabularnewline
\cline{1-1} 
+ & \tabularnewline
\cline{1-1} 
\multicolumn{1}{c}{} & \tabularnewline
\end{tabular} \tabularnewline
\hline 
    \begin{tikzpicture}
\foreach \x in {0.7,-1.8,-2.5,-3.5}
	\draw (\x-0,0)--
 (\x-0.5,0)--(\x-0.3,-0.7)--
 (\x-0.2,-0.7)--
 cycle; 
\node at (-1.5,-0.35) {$\cdots$};
\node at (-0.2,-0.35) {$\cdots$};
\node at (-3.25,-0.35) {$\cdots$}; 

\draw (-0.5,0) -- (-1.2,0) --
(-1.2,-0.7) -- (-0.5,-0.7) -- cycle;

\node at (-0.4,0.1) {\tiny $\frac{-(n-2)}{4}$};
\node at (-1.3,0.1) {\tiny $\frac{-(n-2)}{4}$};
\node at (-0.4,-0.8) {\tiny $\frac{-(n-2)}{4}$};
\node at (-1.3,-0.8) {\tiny $\frac{-(n-2)}{4}$};

  \draw[arrows = {-Stealth[]}]          (-0.85,0)   to [out=90,in=180]node[above]{\tiny $\frac{-n}{4}, \frac{-(n-4)}{4}$} (1.15,0.7);
\draw[arrows = {-Stealth[]}]          (-0.85,0)   to [out=90,in=0]node[above]{\tiny $\frac{n}{4}, \frac{n-4}{4}$} (-2.85,0.7);

\node at (-3.75,0.1) {\tiny $\frac{n-1}{2}$};
\node at (-2.75,0.1) {\tiny $\frac{1}{2}$};
\node at (-2.05,0.1) {\tiny $\frac{-1}{2}$};
\node at (0.5,0.1) {\tiny $\frac{-(n-5)}{2}$};
\end{tikzpicture} &  \begin{tabular}{|c|c}
\hline 
+ & \multicolumn{1}{c|}{$-$}\tabularnewline
\hline 
+ & \multicolumn{1}{c|}{$-$}\tabularnewline
\hline 
+ & \tabularnewline
\cline{1-1} 
$\vdots$ & \tabularnewline
\cline{1-1} 
+ & \tabularnewline
\cline{1-1} 
\multicolumn{1}{c}{} & \tabularnewline
\end{tabular} \tabularnewline
\hline 
\end{longtable}
\end{center}
\vspace{-5mm}
More explicitly, for the first datum in the table, the results in Case (4) of Section \ref{sec-coh} imply that it is an $A_{\mathfrak{q}}(\lambda)$-module with $\mathfrak{l}_0 = \mathfrak{u}(\frac{n}{2}+2,2) \oplus \mathfrak{u}(1,0)^{\oplus n}$ in the weakly good range. For the second datum, it is a module studied in Case (3) of Section \ref{sec-coh}, where $\mathfrak{l}_0 = \mathfrak{u}(\frac{n}{2},1) + \mathfrak{u}(\frac{n}{2},1)$. Similarly, the third datum corresponds to $\mathfrak{l}_0 = \mathfrak{u}(1,0)^{\oplus n} \oplus \mathfrak{u}(\frac{n}{2}+2,2)$. 
By the algorithm given in \cite[Lemma 5.6]{T01}, one can compute the associated variety of the $A_{\mathfrak{q}}(\lambda)$ module, which is given on the right column of the above table. Since they have the correct complex associated variety $d_{LSBV}(\mathcal{O}^{\vee})$ and infinitesimal character, they are the three special unipotent representations in $\Pi(\mathcal{O}^{\vee})$.

\medskip
As for $U(2,2)$, the three $\theta$-stable data are:
\begin{tikzpicture}
\draw (0.4,0)--
 (0.8,0)--(0.65,-0.5)--
 (0.55,-0.5)--
 cycle; 

\node at (0.6,0) {\scriptsize $\frac{-1}{2}$};

\draw (-0.4,-0.5)--
 (-0.8,-0.5)--(-0.65,0)--
 (-0.55,0)--
 cycle; 

\node at (-0.6,-0.5) {\scriptsize $\frac{1}{2}$};

\draw
    (-0.15,0) 
 -- (0,0) node {\scriptsize $0$}
 -- (0.15, 0) 
-- (0.15,-0.5) 
-- (0,-0.5) node {\scriptsize $0$}
-- (-0.15,-0.5)
 -- cycle;

 \draw[arrows = {-Stealth[]}]          (-0.15,0)   to [out=90,in=0]node[above]{\scriptsize $\frac{1}{2}$} (-0.7,0.3);
\draw[arrows = {-Stealth[]}]          (0.15,0)   to [out=90,in=0]node[above]{\scriptsize \scriptsize $\frac{-1}{2}$} (0.7,0.3);
\end{tikzpicture}, 
\begin{tikzpicture}
\draw
    (-0.25,0) 
 -- (0,0) node {\scriptsize $0\ 0$}
 -- (0.25, 0) 
-- (0.25,-0.5) 
-- (0,-0.5) node {\scriptsize $0\ 0$}
-- (-0.25,-0.5)
 -- cycle;

 \draw[arrows = {-Stealth[]}]          (-0.2,0)   to [out=90,in=0]node[above]{\scriptsize $\frac{1}{2},\frac{1}{2}$} (-0.7,0.3);
\draw[arrows = {-Stealth[]}]          (0.2,0)   to [out=90,in=0]node[above]{\scriptsize \scriptsize $\frac{-1}{2},\frac{-1}{2}$} (0.7,0.3);
\end{tikzpicture}, 
\begin{tikzpicture}

\draw (0.4,-0.5)--
 (0.8,-0.5)--(0.65,0)--
 (0.55,0)--
 cycle; 

\node at (0.6,-0.5) {\scriptsize $\frac{-1}{2}$};

\draw (-0.4,0)--
 (-0.8,0)--(-0.65,-0.5)--
 (-0.55,-0.5)--
 cycle; 

\node at (-0.6,0) {\scriptsize $\frac{1}{2}$};

\draw
    (-0.15,0) 
 -- (0,0) node {\scriptsize $0$}
 -- (0.15, 0) 
-- (0.15,-0.5) 
-- (0,-0.5) node {\scriptsize $0$}
-- (-0.15,-0.5)
 -- cycle;

 \draw[arrows = {-Stealth[]}]          (-0.15,0)   to [out=90,in=0]node[above]{\scriptsize $\frac{1}{2}$} (-0.7,0.3);
\draw[arrows = {-Stealth[]}]          (0.15,0)   to [out=90,in=0]node[above]{\scriptsize \scriptsize $\frac{-1}{2}$} (0.7,0.3);
\end{tikzpicture} respectively.

\item $\mathcal{O}^{\vee} = (n-2,2,2)$. In such a case, $|\Pi(\mathcal{O}^{\vee})| = 1$ and the representation has infinitesimal character $\Lambda := (\frac{n-3}{2}, \dots, -\frac{n-3}{2}; \frac{1}{2}, -\frac{1}{2}; \frac{1}{2}, -\frac{1}{2})$.
It is real parabolically induced from the trivial module of the Levi subgroup $GL(2,\mathbb{C}) \times U(n-2,0)$.

\item $\mathcal{O}^{\vee} = (n,1,1)$. In such a case, $|\Pi(\mathcal{O}^{\vee})| = 1$ and the representation has infinitesimal character $\Lambda := (\frac{n-1}{2}, \dots, -\frac{n-1}{2}; 0; 0)$. It is real parabolically induced from the trivial module of the Levi subgroup $GL(1,\mathbb{C}) \times U(n-1,1)$. Note that its associated variety is the closure of the union of two $K$-nilpotent orbits. 
\item $\mathcal{O}^{\vee} = (n-2,1,1,1,1)$.  In such a case, $|\Pi(\mathcal{O}^{\vee})| = 1$ and the representation has infinitesimal character $\Lambda := (\frac{n-3}{2}, \dots, -\frac{n-3}{2}; 0; 0; 0; 0)$. It is real parabolically induced from the trivial module of the Levi subgroup $GL(1,\mathbb{C}) \times GL(1,\mathbb{C}) \times U(n-2,0)$.
\end{itemize}

\subsection{$G = U(n,2)$, $n$ odd}
\begin{itemize}
\item $\mathcal{O}^{\vee} = (n-2,2,2)$. In such a case, $|\Pi(\mathcal{O}^{\vee})| = 1$  and the representation has infinitesimal character $\Lambda := (\frac{n-3}{2}, \dots, -\frac{n-3}{2}; \frac{1}{2}, -\frac{1}{2}; \frac{1}{2}, -\frac{1}{2})$. It is real parabolically induced from the trivial module of the Levi subgroup $GL(2,\mathbb{C}) \times U(n-2,0)$. 
\item $\mathcal{O}^{\vee} = (n,1,1)$. In such a case, $|\Pi(\mathcal{O}^{\vee})| = 2$  and the representations have infinitesimal character $\Lambda := (\frac{n-1}{2}, \dots, -\frac{n-1}{2}; 0; 0)$.
These modules have $\theta$-stable data of the form:
\begin{center}
\begin{tabular}{|c|c|}
\hline 
$\theta$-stable datum & Associated variety \tabularnewline
\hline 
\hline 
\begin{tikzpicture}
\foreach \x in {-1.7,0.3,0.9,1.5,3.5}
	\draw (\x+0,0)--
 (\x+0.5,0)--(\x+0.3,-0.7)--
 (\x+0.2,-0.7)--
 cycle; 

\node at (-1.45,0.1) {\tiny $\frac{n-3}{2}$};
\node at (0.55,0.1) {\tiny $-1$};
\node at (1.15,0.1) {\tiny $0$};
\node at (1.75,0.1) {\tiny $1$};
\node at (3.75,0.1) {\tiny $\frac{-(n-3)}{2}$};

\node at (-0.95,-0.35) {$\cdots$};
\node at (3.25,-0.35) {$\cdots$}; 

\draw (-0.7,0) -- (-0.3,0) --
(-0.3,-0.7) -- (-0.7,-0.7) -- cycle;

  \draw[arrows = {-Stealth[]}]          (-0.5,0)   to [out=90,in=0]node[above]{\tiny $\frac{n-1}{4}$} (-2,0.7);
\draw[arrows = {-Stealth[]}]          (-0.5,0)   to [out=90,in=180]node[above]{\tiny $\frac{-(n-1)}{4}$} (1,0.7);

\node at (-0.5,0.1) {\tiny $\frac{n-1}{4}$};
\node at (2.7,0.1) {\tiny $\frac{-(n-1)}{4}$};
\node at (-0.5,-0.8) {\tiny $\frac{n-1}{4}$};
\node at (2.7,-0.8) {\tiny $\frac{-(n-1)}{4}$};

\draw (2.5,0) -- (2.9,0) --
(2.9,-0.7) -- (2.5,-0.7) -- cycle;

  \draw[arrows = {-Stealth[]}]          (2.7,0)   to [out=90,in=0]node[above]{\tiny $\frac{n-1}{4}$} (1.2,0.7);
\draw[arrows = {-Stealth[]}]          (2.7,0)   to [out=90,in=180]node[above]{\tiny $\frac{-(n-1)}{4}$} (4.2,0.7);

\node at (0.1,-0.35) {$\cdots$};
\node at (2.25,-0.35) {$\cdots$}; 

\end{tikzpicture} & \begin{tabular}{|c|cc}
\hline 
$-$ & \multicolumn{1}{c|}{+} & \multicolumn{1}{c|}{$-$}\tabularnewline
\hline 
+ &  & \tabularnewline
\cline{1-1} 
$\vdots$ &  & \tabularnewline
\cline{1-1} 
+ &  & \tabularnewline
\cline{1-1} 
+ &  & \tabularnewline
\cline{1-1} 
\multicolumn{1}{c}{} &  & \tabularnewline
\end{tabular} \tabularnewline
\hline 
\begin{tikzpicture}
\foreach \x in {0,1,2.5,3.5}
	\draw (\x+0,0)--
 (\x+0.5,0)--(\x+0.3,-0.7)--
 (\x+0.2,-0.7)--
 cycle; 
\node at (0.7,-0.35) {$\cdots$};\node at (3.25,-0.35) {$\cdots$}; 

\draw (1.65,0)--(2.35,0)--(2.2,-0.7)--(1.8,-0.7)--cycle;
  \draw[arrows = {-Stealth[]}]          (2,0)   to [out=90,in=0]node[above]{\tiny $\frac{n-1}{2}, 0$} (-0.5,0.7);
\draw[arrows = {-Stealth[]}]          (2,0)   to [out=90,in=180]node[above]{\tiny $0, \frac{-(n-1)}{2}$} (4.5,0.7);

\node at (2,0.1) {\tiny $0\ 0\ 0$};
\node at (2,-0.8) {\tiny $0\ 0$};
\node at (0.25,0.1) {\tiny $\frac{n-3}{2}$};
\node at (1.25,0.1) {\tiny $1$}; 
\node at (2.75,0.1) {\tiny $-1$};
\node at (3.75,0.1) {\tiny $\frac{-(n-3)}{2}$};
\end{tikzpicture} &  \begin{tabular}{|c|cc}
\hline 
+ & \multicolumn{1}{c|}{$-$} & \multicolumn{1}{c|}{+}\tabularnewline
\hline 
+ &  & \tabularnewline
\cline{1-1} 
$\vdots$ &  & \tabularnewline
\cline{1-1} 
+ &  & \tabularnewline
\cline{1-1} 
$-$ &  & \tabularnewline
\cline{1-1} 
\multicolumn{1}{c}{} &  & \tabularnewline
\end{tabular} \tabularnewline
\hline 
\end{tabular}
\end{center}
In particular, the first datum is an $A_{\mathfrak{q}}(\lambda)$ module in the weakly good range, with $\theta$-stable Levi $\mathfrak{l}_0 = \mathfrak{u}(\frac{n-1}{2},1) \oplus \mathfrak{u}(1,0) \oplus  \mathfrak{u}(\frac{n-1}{2},1)$; while the second datum is of Case (1) in Section \ref{sec-coh}
with $\mathfrak{l}_0 = \mathfrak{u}(n-1,1) \oplus \mathfrak{u}(0,1) \oplus \mathfrak{u}(1,0)$.

\item $\mathcal{O}^{\vee} = (n-2,1,1,1,1)$. In such a case, $|\Pi(\mathcal{O}^{\vee})| = 1$ and the representation has infinitesimal character $\Lambda := (\frac{n-3}{2}, \dots, -\frac{n-3}{2}; 0; 0; 0; 0)$. It is real parabolically induced from the trivial module of the Levi subgroup $GL(1,\mathbb{C}) \times GL(1,\mathbb{C}) \times U(n-2,0)$.
\end{itemize}

In view of the above calculations, we make the following:
\begin{conjecture}
    All special unipotent representations of $U(p,q)$ are fundamental.
\end{conjecture}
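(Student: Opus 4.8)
The plan is to combine the Barbasch--Ma--Sun--Zhu classification (Theorem~\ref{thm-bmsz}) with an inductive analysis of the $\theta$-stable datum, using the combinatorial framework of the paper throughout. First I would reduce to orbits all of whose row sizes are of good parity. By Theorem~\ref{thm-bmsz}, an arbitrary member of $\Pi(\mathcal{O}^{\vee})$ is obtained from a member $X$ of $\Pi(\mathcal{O}_{good}^{\vee})$ by real parabolic induction from $\bigl(\prod_i GL(b_{2i},\mathbb{C})\bigr)\times U(p^-,q^-)$ with the trivial character on each $GL$-factor; by the recipe recalled right after Theorem~\ref{thm-bmsz}, this just adjoins to the datum of $X$ one $(b,b)$-block of content $0$ (suitably shifted, as in Remark~\ref{rmk-shifted}, to keep contents distinct) for each bad-parity pair. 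Granting the description of the datum of $X$ in the next step -- a sequence of blocks whose contents decrease in steps of at most $1$ and straddle $0$ -- each newly adjoined block slots into this sequence with both neighbours within $1$, so the "neighbouring blocks differ by $\le 1$" condition defining \emph{fundamental} is preserved. Hence it suffices to prove the conjecture when $\mathcal{O}^{\vee}$ has good parity, where moreover the infinitesimal character $\tfrac12 h^{\vee}$ is integral.

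Second, for $\mathcal{O}_{good}^{\vee}=(g_1\ge\cdots\ge g_k)$ I would determine the $\theta$-stable datum of every member of $\Pi(\mathcal{O}_{good}^{\vee})$ by induction on $|\mathcal{O}^{\vee}|$ that mirrors the theta-lift construction of \cite{BMSZ21}: remove the first column of the Young diagram of $\mathcal{O}^{\vee}$ (delete one box from the end of each row), relate the packet for $U(p,q)$ to that for a smaller unitary group, and track how the lowest $K$-type -- equivalently the $\lambda_a$-datum -- transforms. The expected outcome is that the $\lambda_a$-datum of a packet member is a "staircase": its blocks form a sequence with contents $\gamma_1>\gamma_2>\cdots$ satisfying $\gamma_j-\gamma_{j+1}\le 1$, with the precise shape governed by the column lengths of $\mathcal{O}^{\vee}$ and by the signed Young tableau labelling the member; reading off the neighbouring differences then yields the fundamental condition at once. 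The explicit computations for $U(n,1)$ and $U(n,2)$ carried out in this section serve as the base cases and as consistency checks for the induction. As an alternative to running the theta-lift bookkeeping, one may instead try to characterize the members of $\Pi(\mathcal{O}^{\vee})$ among all irreducibles of infinitesimal character $\tfrac12 h^{\vee}$ by their associated variety $\overline{d_{LSBV}(\mathcal{O}^{\vee})}$, and then show purely combinatorially -- using that a prescribed associated variety forces the $\nu$-coordinates on certain blocks to vanish, so only data of a restricted shape survive -- that any such datum is the staircase above; this route stays inside the $\theta$-stable datum calculus (the algorithm of \cite[Lemma~5.6]{T01} and the good-range criterion Lemma~\ref{lem-good}), at the price of a delicate "which datum realizes which associated variety" analysis.

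The main obstacle is Step~2: controlling the Langlands parameters (equivalently the $\theta$-stable data, equivalently the lowest $K$-types) of the BMSZ representations in general, a point the construction of \cite{BMSZ21} leaves implicit. I would attack it first via the combinatorial route (ii), since it never leaves the language of this paper; the delicate part there is bounding, for each column-peeling step, exactly which blocks are forced to be rectangles/trapezoids (with $\nu=0$) versus which carry a genuine continuous parameter, so that distinctness of contents plus the associated-variety constraint pins down the staircase uniquely. Where route (ii) stalls -- plausibly for orbits with many columns of the same length, or for the several members labelled by different signed tableaux -- I would fall back on route (i), making the dual-pair bookkeeping for $(U(p,q),U(p',q'))$ explicit (behaviour of lowest $K$-types under the correspondence, Kudla's persistence/stable-range descent) to identify the datum directly. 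Either way, once the staircase shape is established the conjecture is immediate, and the verification in Sections on $U(n,1)$ and $U(n,2)$ already exhibits the pattern one expects to prove in general.
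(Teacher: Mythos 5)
This statement is offered in the paper as a \emph{conjecture}, not a theorem: the paper gives no proof, only the explicit case-by-case verification for $U(n,1)$ and $U(n,2)$ in Section~6 as supporting evidence. So there is no ``paper's proof'' to compare your attempt against, and your proposal should be judged as a research plan rather than a verification.

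As a plan it is sensible but has real gaps, which you honestly flag. The reduction to good-parity orbits via Theorem~\ref{thm-bmsz} is a natural first move, but the claim that adjoining a $(b,b)$-block of content $0$ automatically preserves the $\le 1$ spacing is not immediate: a valid $\lambda_a$-datum requires all block contents to be \emph{distinct}, and the good-parity piece may already contain a block of content $0$ (indeed, since the infinitesimal character $\frac{1}{2}h^\vee$ is symmetric about $0$, this is the typical situation). The ``suitable shift'' you invoke is exactly where the argument could break the fundamental condition, so this needs to be made precise — for example by merging the adjoined block with an existing content-$0$ block, or by proving directly from Method~II that the induced module's datum is a single larger block. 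Your Step~2 is the genuine bottleneck, as you say: no published source gives the Langlands parameters (equivalently, $\theta$-stable data or lowest $K$-types) of the BMSZ packets for general $U(p,q)$, and both of your proposed routes — unwinding the theta-lift construction, or characterizing the packet members by associated variety using Lemma~\ref{lem-good} and the algorithm of \cite[Lemma~5.6]{T01} — amount to nontrivial new work, not routine bookkeeping. In particular the associated-variety route requires showing that, among irreducibles with the given infinitesimal character, the prescribed complex associated variety forces the datum into the ``staircase'' shape; this is plausible but unproved. So the proposal is a reasonable outline of an attack, but it does not constitute a proof and would need the obstacles you name to be actually overcome.
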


\section{Acknowledgement}
This project is supported by National Natural
Science Foundation of China (grant no. 12371033) and
Shenzhen Science and Technology Innovation Committee (grant no. 20220818094918001).


\begin{thebibliography}{0}

\bibitem{AJ87}
J.~Adams, J.~Johnson.
`` Endoscopic groups and packets of nontempered representations."\textit{ 
Compositio Math.}  64 (1987), no. 3, 271--309.

\bibitem{AvLTV20} 
J.~Adams, M.~van Leeuwen, P.~Trapa and D.~Vogan. 
`` Unitary representations of real reductive groups."\textit{ Ast{\'erisque}} 417 (2020).

\bibitem{BJ90a}
J.~Bang-Jensen. 
`` On unitarity of spherical representations."\textit{
Duke Math. J.} 61 (1990), no. 1, 157--194. 

\bibitem{BJ90b}
J.~Bang-Jensen. 
`` The multiplicities of certain K-types in spherical representations."\textit{ 
J. Funct. Anal.} 91 (1990), no. 2, 346--403. 

\bibitem{BK87}
M.W.~Baldoni Silva, A.~Knapp. `` Irreducible unitary representations of some groups of real rank two."\textit{ 
Lecture Notes in Math. 1243} (1987), 15--36.

\bibitem{BK89a}
M.W.~Baldoni Silva, A.~Knapp. ``Intertwining operators and unitary representations. I."\textit{J. Funct. Anal.}   82(1) (1989), 151--236.

\bibitem{BK89b}
M.W.~Baldoni Silva, A.~Knapp. `` Intertwining operators and unitary representations. I."\textit{Asterisque   171-172} (1989), 131--162.

\bibitem{B89}
D.~Barbasch.
``The unitary dual for complex classical Lie groups."\textit{
Invent. Math.}  96 (1989), 103--176.

\bibitem{B04}
D.~Barbasch.
`` A reduction theorem for the unitary dual of $U(p,q)$."\textit{ 
Progr. Math.}  220, Birkhauser Boston, 2004, 21--60.

\bibitem{B10}
D.~Barbasch. 
``The  unitary spherical spectrum for split classical groups."\textit{
Journal of Inst. of Math. Jussieu} 9 (2010) Issue 2, pp.~265--356.


\bibitem{BDW22} D.~Barbasch, C.-P.~Dong, K.D.~Wong.
`` A multiplicity one theorem for Dirac cohomology of unitary $(\mathfrak{g},K)$-modules."\textit{
Adv. Math.}  403 (2022), 108370

\bibitem{BMSZ21}
D.~Barbasch, J.-J.~Ma, B.~Sun, C.-B.~Zhu.
`` Special unipotent representations of real classical groups: construction and unitarity."\textit{ 
preprint}, arXiv:1712.05552 

\bibitem{BMSZ22}
D.~Barbasch, J.-J.~Ma, B.~Sun, C.-B.~Zhu.
`` Special unipotent representations of real classical groups: counting and reduction."\textit{preprint}, 
 arXiv:1712.05552  

\bibitem{BV85}
D.~Barbasch, D.~Vogan. 
`` Unipotent representations of complex semisimple groups."\textit{
Ann. of Math.}  121 (1985), no. 1, 41--110.

\bibitem{DW21}
C.-P.~Dong, K.D. Wong.
`` On the Dirac series of $U(p,q)$."\textit{
Math. Z.} 298 (2021), no. 1-2, pp. 839--859.

\bibitem{KS82}
A.~Knapp, B.~Speh.
`` Irreducible unitary representations of $SU(2,2)$."\textit{
J. Functional Analysis} 45 (1982), no. 1, 41--73.

\bibitem{KS83}
A.~Knapp, B.~Speh.
`` The role of basic cases in classification: Theorems about unitary representations applicable to $SU(N,2)$."\textit{Non Commutative Harmonic Analysis and Lie Groups (Marseilles, 1982), Lecture Notes in Mathematics} 1020 (1983), pp.119--160.

\bibitem{KV95}
A.~Knapp, D.~Vogan.
`` Cohomological induction and unitary representations."\textit{
Princeton University Press, Princeton NJ} 1995.

\bibitem{KZ76}
A.~Knapp, G.~Zuckerman. 
`` Classification theorems for representations of semisimple Lie
groups."\textit{ 
Lect. Notes in Math.} 587 (1976), 138--159.


\bibitem{SR88}
S.~Salamanca-Riba. 
`` On the unitary dual of some classical Lie groups."\textit{
Compositio Math.} 68 (1988), no. 3, 251--303.

\bibitem{SR99}
S.~Salamanca-Riba. 
`` On the unitary dual of real reductive Lie groups and the $A_q(\lambda)$ modules: the strongly regular case."\textit{ 
Duke Math. J.} 96 (1999), no. 3, 521--546.

\bibitem{SRV98} S.~Salamanca-Riba, D.~Vogan. 
`` On the classification of unitary representations of reductive Lie
groups."\textit{Ann. of Math.}  148 (3) (1998), 1067--1133.

\bibitem{SV80}
B.~Speh, D.~Vogan. 
`` Reducibility of generalized principal series representations."\textit{
Acta Math.}  145  (1980), no. 3--4, 227--299.

\bibitem{T01}
P.~Trapa.
`` Annihilators and associated varieties of  $A_q(\lambda)$ modules for  $U(p,q)$."\textit{  
Compositio Math.} 129 (2001), no. 1, 1--45.

\bibitem{V79}
D.~Vogan.
`` The algebraic structure of the representation of semisimple Lie groups I."\textit{
Ann. of Math. (2)} 109 (1979), no. 1, 1--60.

\bibitem{V82}
D.~Vogan.
``  Representations of real reductive Lie groups."\textit{ 
Progress in Mathematics} 15 Birkhauser Boston, 1981.

\bibitem{V84}
D.~Vogan.
`` Unitarizability of certain series of representations."\textit{
Ann. of Math. (2)} 120 (1984), no. 1, 141--187.

\bibitem{V86}
D.~Vogan.
`` The unitary dual of $GL(n)$ over an archimedean field."\textit{
Invent. Math.} 83 (1986), 449--505.


\bibitem{VZ84}
D.~Vogan, G.~Zuckerman.
``Unitary Representations with non-zero cohomology."\textit{
Compositio Math.}
 53 (1984), 51--90.

\bibitem{W22}
K. D.~Wong.
`` On some conjectures of the unitary dual of $U(p,q)$."\textit{
Adv. Math.}, 442 (2024), 109584.

\bibitem{WZ23}
K. D.~Wong, H.~Zhang.
`` The genuine unitary dual of complex spin groups."\textit{
preprint}, arXiv:2303.10803
 
\end{thebibliography}
\end{document}